\DeclareMathOperator{\sgn}{\mathrm{sgn}}
\begin{document}
 \bibliographystyle{plain}

 \newtheorem{theorem}{Theorem}
 \newtheorem{lemma}[theorem]{Lemma}
 \newtheorem{proposition}[theorem]{Proposition}
 \newtheorem{corollary}[theorem]{Corollary}
 \theoremstyle{definition}
 \newtheorem{definition}[theorem]{Definition}
 \newtheorem{example}[theorem]{Example}
 \theoremstyle{remark}
 \newtheorem{remark}[theorem]{Remark}
 \newcommand{\mc}{\mathcal}
 \newcommand{\A}{\mc{A}}
 \newcommand{\B}{\mc{B}}
 \newcommand{\cc}{\mc{C}}
 \newcommand{\D}{\mc{D}}
 \newcommand{\E}{\mathbb{E}}
 \newcommand{\F}{\mc{F}}
 \newcommand{\G}{\mc{G}}
 \newcommand{\sH}{\mc{H}}
 \newcommand{\I}{\mc{I}}
 \newcommand{\J}{\mc{J}}
 \newcommand{\nn}{\mc{N}}
 \newcommand{\rr}{\mc{R}}
 \newcommand{\sS}{\mc{S}}
 \newcommand{\U}{\mc{U}}
 \newcommand{\X}{\mc{X}}
 \newcommand{\Y}{\mc{Y}}
 \newcommand{\C}{\mathbb{C}}
 \newcommand{\R}{\mathbb{R}}
 \newcommand{\N}{\mathbb{N}}
 \newcommand{\Q}{\mathbb{Q}}
 \newcommand{\Z}{\mathbb{Z}}
 \newcommand{\csch}{\mathrm{csch}}
 \newcommand{\tF}{\widehat{F}}
 \newcommand{\tG}{\widehat{G}}
 \newcommand{\tH}{\widehat{H}}
 \newcommand{\tf}{\widehat{f}}
 \newcommand{\ug}{\widehat{g}}
 \newcommand{\wg}{\widetilde{g}}
 \newcommand{\uh}{\widehat{h}}
 \newcommand{\wh}{\widetilde{h}}
 \newcommand{\wl}{\widetilde{l}}
 \newcommand{\tk}{\widehat{k}}
 \newcommand{\tK}{\widehat{K}}
 \newcommand{\tl}{\widehat{l}}
 \newcommand{\tL}{\widehat{L}}
 \newcommand{\tm}{\widehat{m}}
 \newcommand{\tM}{\widehat{M}}
 \newcommand{\tp}{\widehat{\varphi}}
 \newcommand{\tq}{\widehat{q}}
 \newcommand{\tT}{\widehat{T}}
 \newcommand{\tU}{\widehat{U}}
 \newcommand{\tu}{\widehat{u}}
 \newcommand{\tV}{\widehat{V}}
 \newcommand{\tv}{\widehat{v}}
 \newcommand{\tW}{\widehat{W}}
 \newcommand{\ba}{\boldsymbol{a}}
 \newcommand{\bal}{\boldsymbol{\alpha}}
 \newcommand{\bx}{\boldsymbol{x}}
 \newcommand{\p}{\varphi}
 \newcommand{\f}{\frac52}
 \newcommand{\g}{\frac32}
 \newcommand{\h}{\frac12}
 \newcommand{\hh}{\tfrac12}
 \newcommand{\ds}{\text{\rm d}s}
 \newcommand{\dt}{\text{\rm d}t}
  \newcommand{\dr}{\text{\rm d}r}
 \newcommand{\du}{\text{\rm d}u}
 \newcommand{\dv}{\text{\rm d}v}
 \newcommand{\dw}{\text{\rm d}w}
  \newcommand{\dz}{\text{\rm d}z}
 \newcommand{\dx}{\text{\rm d}x}
   \newcommand{\dxx}{\text{\rm d}{\bf x}}
 \newcommand{\dy}{\text{\rm d}y}
 \newcommand{\dl}{\text{\rm d}\lambda}
 \newcommand{\dmu}{\text{\rm d}\mu(\lambda)}
 \newcommand{\dnu}{\text{\rm d}\nu(\lambda)}
  \newcommand{\dnuN}{\text{\rm d}\nu_N(\lambda)}
\newcommand{\dnus}{\text{\rm d}\nu_{\sigma}(\lambda)}
 \newcommand{\dlnu}{\text{\rm d}\nu_l(\lambda)}
 \newcommand{\dnnu}{\text{\rm d}\nu_n(\lambda)}
\newcommand{\sech}{\text{\rm sech}}
\newcommand{\CC}{\mathbb{C}}
\newcommand{\NN}{\mathbb{N}}
\newcommand{\RR}{\mathbb{R}}
\newcommand{\ZZ}{\mathbb{Z}}
\newcommand{\thp}{\theta^+}
\newcommand{\thpn}{\theta^+_N}
\newcommand{\vthp}{\vartheta^+}
\newcommand{\vthpn}{\vartheta^+_N}
\newcommand{\ft}[1]{\widehat{#1}}
\newcommand{\support}[1]{\mathrm{supp}(#1)}
\newcommand{\gplus}{G^+_\lambda}
\newcommand{\ph}{\mathcal{H}}
\newcommand{\godd}{G^o_\lambda}
\newcommand{\mplus}{M_\lambda^+}
\newcommand{\lplus}{L_\lambda^+}
\newcommand{\modd}{M_\lambda^o}
\newcommand{\lodd}{L_\lambda^o}
\newcommand{\sgp}{x_+^0}
\newcommand{\Tl}{T_\lambda}
\newcommand{\Tlc}{T_{\lambda,c}}
\newcommand{\El}{{E_\lambda}}

 \newcommand{\im}{{\rm Im}\,}
  \newcommand{\re}{{\rm Re}\,}

 \newcommand{\T}{\mc{T}}
 \newcommand{\M}{\mc{M}}
 \renewcommand{\L}{\mc{L}}
 \newcommand{\K}{\mc{K}}
 \renewcommand{\H}{\mc{H}}
 \newcommand{\Bc}{\mathcal{B}}

\newcommand{\TT}{\mathfrak{T}}
\newcommand{\MM}{\mathfrak{M}}
\newcommand{\LL}{\mathfrak{L}}
\newcommand{\KK}{\mathfrak{K}}
\newcommand{\GG}{\mathfrak{G}}
\newcommand{\HH}{\mathfrak{H}}

 \renewcommand{\d}{\text{\rm d}}
 
  \renewcommand{\SS}{\mathbb{S}}
 
  \newcommand{\z}{{\bf z}}
    \newcommand{\x}{{\bf x}}
      \newcommand{\y}{{\bf y}}
        \renewcommand{\t}{{\bf t}}
         \renewcommand{\v}{{\bf v}}
    \newcommand{\xxi}{{\bf \xi}}      

 \newcommand{\ov}{\overline}

 \newcommand{\uc}{\partial \mathbb{D}}
\newcommand{\ud}{\mathbb{D}}
\newcommand{\dbpn}{\mathcal{H}_n(P)}

\newcommand{\newr}[1]{\textcolor{red}{#1}}
\newcommand{\new}[1]{\textcolor{black}{#1}}

 \def\today{\ifcase\month\or
  January\or February\or March\or April\or May\or June\or
  July\or August\or September\or October\or November\or December\fi
  \space\number\day, \number\year}

\title[Extremal functions]{Extremal functions in de Branges \\
and Euclidean spaces}
\author[Carneiro and Littmann]{Emanuel Carneiro and  Friedrich Littmann}

\date{\today}
\subjclass[2000]{41A30, 46E22, 41A05, 41A63.}
\keywords{Extremal functions, de Branges spaces, exponential type, Laplace transform, homogeneous spaces, majorants.}

\address{IMPA - Instituto Nacional de Matem\'{a}tica Pura e Aplicada, Estrada Dona Castorina, 110, Rio de Janeiro, Brazil 22460-320.}
\email{carneiro@impa.br}
\address{Department of Mathematics, North Dakota State University, Fargo, ND 58105-5075.}
\email{friedrich.littmann@ndsu.edu}

\begin{abstract}
In this work we obtain optimal majorants and minorants of exponential type for a wide class of radial functions on $\R^N$.  These extremal functions minimize the $L^1(\R^N, |x|^{2\nu + 2 - N}\dx)$-distance to the original function, where $\nu >-1$ is a free parameter. To achieve this result we develop new interpolation tools to solve an associated extremal problem for the exponential function $\mc{F}_{\lambda}(x) = e^{-\lambda|x|}$, where $\lambda >0$, in the general framework of de Branges spaces of entire functions. We then specialize the construction to a particular family of homogeneous de Branges spaces to approach the multidimensional Euclidean case. Finally, we extend the result from the exponential function to a class of subordinated radial functions via integration on the parameter $\lambda >0$ against suitable measures. Applications of the results presented here include multidimensional versions of Hilbert-type inequalities, extremal one-sided approximations by trigonometric polynomials for a class of even periodic functions and extremal one-sided approximations by polynomials for a class of functions on the sphere $\SS^{N-1}$ with an axis of symmetry.
\end{abstract}

\maketitle

\numberwithin{equation}{section}

\section{Introduction}

In the remarkable work \cite{HV}, Holt and Vaaler constructed majorants and minorants of prescribed exponential type for characteristic functions of Euclidean balls in $\R^N$, in a way to minimize the $L^1(\R^N)$-error. This was the first time that the methods of Beurling and Selberg were extended to a multidimensional setting (see \S 1.2.1 below for a brief historical account on this problem and its applications). Their main insight was to reformulate and solve the extremal problem for the signum function within the framework of Hilbert spaces of entire functions as developed by L. de Branges \cite{B}. The purpose of the present work is to extend this theory to a wide class of radial functions on $\R^N$, by further exploiting the connection with de Branges spaces, and provide some applications. The central role that was played by the signum function in \cite{HV} is now played by the exponential function $\mc{F}_{\lambda}(x) = e^{-\lambda|x|}$, where $\lambda >0$.

\subsection{An extremal problem in de Branges spaces} We briefly review the basics of de Branges' theory of Hilbert spaces of entire functions \cite{B}.  A function $F$ analytic in the open upper half plane 
$$\U = \{z \in \C;\ \im(z) >0\}$$ 
has {\it bounded type} if it can be written as the quotient of two functions that are analytic and bounded in $\U$. If $F$ has bounded type in $\U$ then, according to \cite[Theorems 9 and 10]{B}, we have
\begin{equation*}
\limsup_{y \to \infty} \, y^{-1}\log|F(iy)| = v(F) <\infty.
\end{equation*}
The number $v(F)$ is called the {\it mean type} of $F$. We say that an entire function $F:\C \to \C$, not identically zero, has {\it exponential type} if 
\begin{equation}\label{Intro_exp_type}
\limsup_{|z|\to \infty} |z|^{-1}\log|F(z)| = \tau(F) <\infty.
\end{equation}
In this case, the nonnegative number $\tau(F)$ is called the {\it exponential type of $F$}. If $F:\C \to \C$ is entire we define $F^*:\C \to \C$ by $F^*(z) = \overline{F(\overline{z})}$. We say that $F$ is {\it real entire} if $F$ restricted to $\R$ is real valued.

\smallskip

A {\it Hermite-Biehler function} $E:\C \to \C$ is an entire function that satisfies the inequality
\begin{align}\label{HB-condition}
|E^*(z)|<|E(z)|
\end{align}
for all $z\in\U$. We define the {\it de Branges space} $\H(E)$ to be the space of entire functions $F:\C \to \C$ such that
\begin{equation*}
\|F\|_E^2 := \int_{-\infty}^\infty |F(x)|^{2} \, |E(x)|^{-2} \, \dx <\infty\,,
\end{equation*}
and such that $F/E$ and $F^*/E$ have bounded type and nonpositive mean type in $\U$. This is a Hilbert space with respect to the inner product 
\begin{equation*}
\langle F, G \rangle_E := \int_{-\infty}^\infty F(x) \,\overline{G(x)} \, |E(x)|^{-2} \, \dx.
\end{equation*}
The Hilbert space $\H(E)$ has the special property that, for each $w \in \C$, the map $F \mapsto F(w)$ is a continuous linear functional on $\H(E)$. Therefore, there exists a function $z \mapsto K(w,z)$ in $\H(E)$ such that 
\begin{equation}\label{Intro_rep_pro}
F(w) = \langle F, K(w,\cdot) \rangle_E\,.
\end{equation} 
The function $K(w,z)$ is called the {\it reproducing kernel} of $\H(E)$. If we write
\begin{equation}\label{Intro_def_A_B}
A(z) := \frac12 \big\{E(z) + E^*(z)\big\} \ \ \ {\rm and}  \ \ \ B(z) := \frac{i}{2}\big\{E(z) - E^*(z)\big\},
\end{equation}
then $A$ and $B$ are real entire functions and $E(z) = A(z) -iB(z)$. The reproducing kernel is then given by \cite[Theorem 19]{B}
\begin{equation*}
\pi (z - \ov{w})K(w,z) = B(z)A(\ov{w}) - A(z)B(\ov{w}),
\end{equation*}
or alternatively by
\begin{equation}\label{Intro_Def_K_0}
2\pi i (\ov{w}-z)K(w,z) = E(z)E^*(\ov{w}) - E^*(z)E(\ov{w}). 
\end{equation}
When $z = \ov{w}$ we have
\begin{equation}\label{Intro_Def_K}
\pi K(\ov{z}, z) = B'(z)A(z) - A'(z)B(z).
\end{equation}
Observe from \eqref{Intro_rep_pro} that 
\begin{align*}
0 \leq \|K(w, \cdot)\|_E^2 = \langle K(w, \cdot), K(w, \cdot) \rangle_E = K(w,w),
\end{align*}
and it is not hard to show (see \cite[Lemma 11]{HV}) that $K(w,w)=0$ if and only if $w \in\R$ and $E(w) = 0$.

\smallskip

We now consider Hermite-Biehler functions $E$ satisfying the following properties:
\begin{enumerate}
\item[(P1)] $E$ has bounded type in $\U$;
\smallskip
\item[(P2)] $E$ has no real zeros; 
\smallskip
\item[(P3)] $z \mapsto E(iz)$ is a real entire function;
\smallskip
\item[(P4)] $A, B \notin \H(E)$. 
\end{enumerate} 
\smallskip
By Krein's theorem (reviewed in Lemma \ref{theorem-krein} below) we see that if $E$ satisfies (P1), then $E$ has exponential type and $\tau(E) = v(E)$. The main result of this paper is the solution of the following extremal problem.

\begin{theorem}\label{Intro_Thm1}
Let $\lambda>0$. Let $E$ be a Hermite-Biehler function satisfying properties {\rm (P1) \!-\! (P4)} above. Assume also that
\begin{equation*}
\int_{-\infty}^{\infty} e^{-\lambda |x|}\, |E(x)|^{-2}\, \dx < \infty.
\end{equation*}
The following properties hold:
\smallskip
\begin{enumerate}
\item[(i)] If $L:\C \to \C$ is an entire function of exponential type at most $2 \tau(E)$ such that $L(x) \leq e^{-\lambda |x|}$ for all $x \in \R$ then 
\begin{equation}\label{Intro-L-integral}
\int_{-\infty}^{\infty} L(x) \,|E(x)|^{-2}\,\dx \leq \sum_{A(\xi)=0} \frac{e^{-\lambda|\xi|}}{K(\xi,\xi)},
\end{equation}
where the sum on the right-hand side of \eqref{Intro-L-integral} is finite. Moreover, there exists an entire function $z\mapsto L(A^2, \lambda, z)$ of exponential type at most $2 \tau(E)$ such that $L(A^2, \lambda, x) \leq e^{-\lambda |x|}$ for all $x \in \R$ and equality in \eqref{Intro-L-integral} holds.
\smallskip
\item[(ii)] If $M:\C \to \C$ is an entire function of exponential type at most $2 \tau(E)$ such that $M(x) \geq e^{-\lambda |x|}$ for all $x \in \R$ then 
\begin{equation}\label{Intro-M-integral}
\int_{-\infty}^{\infty} M(x) \,|E(x)|^{-2}\,\dx \geq \sum_{B(\xi)=0} \frac{e^{-\lambda|\xi|}}{K(\xi,\xi)},
\end{equation}
where the sum on the right-hand side of \eqref{Intro-M-integral} is finite. Moreover, there exists an entire function $z\mapsto M(B^2, \lambda, z)$ of exponential type at most $2 \tau(E)$ such that $M(B^2, \lambda, x) \geq e^{-\lambda |x|}$ for all $x \in \R$ and equality in \eqref{Intro-M-integral} holds.
\end{enumerate}
\end{theorem}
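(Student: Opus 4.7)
The proof splits into two pieces: a de Branges quadrature identity that bounds $\int L\,|E|^{-2}\,\dx$ from above (resp.\ $\int M\,|E|^{-2}\,\dx$ from below) by the right-hand side of \eqref{Intro-L-integral} (resp.\ \eqref{Intro-M-integral}) for every admissible $L$ (resp.\ $M$), and an explicit construction of the extremal functions $L(A^2,\lambda,\cdot)$ and $M(B^2,\lambda,\cdot)$ that saturate the respective bounds. Since (ii) is symmetric to (i) with the roles of $A$ and $B$ reversed, I would focus on (i).

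\textbf{Quadrature step.} First I would establish the following de Branges quadrature: for every entire $F$ of exponential type at most $2\tau(E)$ with $F\cdot|E|^{-2}\in L^1(\R)$,
$$\int_{-\infty}^{\infty} F(x)\,|E(x)|^{-2}\,\dx=\sum_{A(\xi)=0}\frac{F(\xi)}{K(\xi,\xi)}.$$
The argument uses the algebraic identity $1/E+1/E^*=2A/|E|^2$, so that $F/(EA)+F/(E^*A)=2F/|E|^2$ is regular on $\R$, and then evaluates the two summands by closing contours in $\U$ and in the lower half plane. Property (P1) combined with Krein's theorem (Lemma \ref{theorem-krein}) controls the mean types of $F/E$ and $F/E^*$, so the expanding arcs contribute nothing; property (P3) matches the imaginary-axis growth of $E$, $A$ and $B$; properties (P2) and (P4) force the zeros of $A$ to be real, simple, and discrete. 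The residue of $F(z)/(E(z)A(z))$ at a zero $\xi$ of $A$ is computed using $E(\xi)=-iB(\xi)$ together with \eqref{Intro_Def_K}, producing the weight $1/K(\xi,\xi)$. Granting this identity, the bound in (i) is immediate: for any admissible minorant $L$, the pointwise inequality $L_+\le e^{-\lambda|\cdot|}$ and the integrability hypothesis give that either $\int L\,|E|^{-2}\dx=-\infty$ (trivial) or $L\cdot|E|^{-2}\in L^1(\R)$, in which case the quadrature yields
$$\int_{-\infty}^{\infty} L(x)\,|E(x)|^{-2}\,\dx=\sum_{A(\xi)=0}\frac{L(\xi)}{K(\xi,\xi)}\le\sum_{A(\xi)=0}\frac{e^{-\lambda|\xi|}}{K(\xi,\xi)}.$$
Finiteness of the right-hand sum follows from $\int e^{-\lambda|x|}|E(x)|^{-2}\dx<\infty$, either by a density/interlacing estimate relating $K(\xi,\xi)^{-1}$ to $|E(\xi)|^{-2}$ times the local spacing of the zeros of $A$, or a posteriori from the existence of the extremal majorant $M(B^2,\lambda,\cdot)$ constructed below.

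\textbf{Construction and pointwise inequality.} The candidate $L(A^2,\lambda,z)$ is built by Hermite interpolation at the zeros $\xi$ of $A$ with data $L(A^2,\lambda,\xi)=e^{-\lambda|\xi|}$ and $L'(A^2,\lambda,\xi)=-\lambda\sgn(\xi)\,e^{-\lambda|\xi|}$ for $\xi\ne 0$, with a compatible one-sided condition at $\xi=0$ should $A(0)=0$. A natural closed form is
$$L(A^2,\lambda,z)=\Bigl(\tfrac{A(z)}{\pi}\Bigr)^{\!2}\!\left\{\sum_{\xi}\frac{e^{-\lambda|\xi|}}{A'(\xi)^2\,(z-\xi)^2}\,-\,\lambda\!\!\sum_{\xi\ne 0}\frac{\sgn(\xi)\,e^{-\lambda|\xi|}}{A'(\xi)^2\,(z-\xi)}\right\}\!,$$
which, since $A$ has exponential type $\tau(E)$, has type at most $2\tau(E)$; the majorant $M(B^2,\lambda,\cdot)$ is defined analogously with $B$ in place of $A$ and suitably modified data. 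Given these candidates, the quadrature step forces equality in \eqref{Intro-L-integral} and \eqref{Intro-M-integral} as soon as one verifies the pointwise inequalities $L(A^2,\lambda,x)\le e^{-\lambda|x|}\le M(B^2,\lambda,x)$ on $\R$. This is the main obstacle of the proof: one must produce a manifestly nonnegative representation of the remainder $e^{-\lambda|x|}-L(A^2,\lambda,x)$ of the form $A(x)^2\Phi(x)$ with $\Phi\ge 0$. My plan is to use the subordination
$$e^{-\lambda|x|}=\int_0^{\infty} e^{-s}\,\mathbf{1}_{[-s/\lambda,\,s/\lambda]}(x)\,\ds,$$
reducing the question to one-sided approximation of characteristic functions of symmetric intervals in $\H(E)$, and then extract the positivity of the resulting kernel from a Laplace-transform identity in the spirit of Graham--Vaaler, adapted to the de Branges setting via the interlacing of the zeros of $A$ and $B$ and the reproducing kernel $K$.
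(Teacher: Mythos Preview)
Your high-level decomposition into ``quadrature'' and ``construction plus pointwise inequality'' matches the paper, but both halves differ substantially from what the paper does, and the second half contains a real gap.

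\textbf{Quadrature.} The paper does not prove a direct quadrature identity $\int F|E|^{-2}=\sum_{A(\xi)=0}F(\xi)/K(\xi,\xi)$ for arbitrary $F$ of type $\le 2\tau(E)$ via contour integration. Instead it goes through the Hilbert space: Lemma \ref{nonnegl1-tohe} factors a nonnegative integrable $F$ of the right type as $UU^*$ with $U\in\H(E)$, and then \cite[Theorem 22]{B} (this is exactly where (P4) enters) gives $\int |U|^2|E|^{-2}=\sum_{\xi}|U(\xi)|^2/K(\xi,\xi)$. For an arbitrary minorant $L$ one first needs the extremal \emph{majorant} $M(B^2,\lambda,\cdot)$ to write $L=UU^*-TT^*$. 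Your contour sketch is plausible in spirit, but ``the expanding arcs contribute nothing'' is exactly the delicate point when $F$ has type \emph{equal} to $2\tau(E)$: $F/(EA)$ can have mean type $0$ in $\U$ without decaying along large semicircles, so a raw residue computation does not close. The paper avoids this entirely via the $UU^*$ factorization.

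\textbf{Construction and pointwise inequality.} This is the genuine gap. The paper does not use a Hermite interpolation series or a reduction to characteristic functions of intervals. It builds $L(A^2,\lambda,\cdot)$ and $M(B^2,\lambda,\cdot)$ in Section \ref{polya} from the Laplace representation $1/F(z)=\int g(t)e^{-zt}\,\dt$ of the reciprocal of the Laguerre--P\'olya functions $F=A^2$ and $F=B^2$, setting $\mc{A}(F,\lambda,z)=F(z)\int_{-\infty}^0 g(w-\lambda)e^{-zw}\,\dw$ and then symmetrizing. The pointwise inequalities \eqref{minorant-ineq} and \eqref{majorant-ineq} come from sign/monotonicity properties of the frequency function $g$ (Lemmas \ref{g-sign-changes} and \ref{growth in LP}): for the minorant, that $g$ is even with a single peak at $0$; for the majorant, that $g''$ is even with a single peak. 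Your subordination $e^{-\lambda|x|}=\int_0^\infty e^{-s}\mathbf{1}_{[-s/\lambda,s/\lambda]}(x)\,\ds$ would only help if the one-sided extremal problem for $\mathbf{1}_{[-r,r]}$ were already solved in a general $\H(E)$ satisfying (P1)--(P4); it is not --- Holt--Vaaler solve it only for the specific homogeneous spaces $\H(E_\nu)$. So this route is circular at the level of generality of Theorem \ref{Intro_Thm1}. The Laplace-transform machinery of Section \ref{polya} is precisely what replaces it.
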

The proof of this result will be carried out in Sections \ref{polya} and \ref{debranges-section}. We will make use of a precise qualitative description of the representations of reciprocals of Laguerre-P\'{o}lya functions as Laplace transforms \cite[Chapters II to V]{HW}. This will ultimately provide the interpolation tools to generate the extremal minorants and majorants we seek. 

\smallskip

The extremal functions $z\mapsto L(A^2, \lambda, z)$ and $z\mapsto M(B^2, \lambda, z)$ in Theorem \ref{Intro_Thm1} depend implicitly on $E$, for they actually depend only on the functions $A^2$ and $B^2$, respectively. Our particular choice of notation will be clarified in Section \ref{polya}. Under hypotheses (P1) - (P3) we shall see that $A^2$ and $B^2$ are even functions in the Laguerre-P\'{o}lya class.

\subsection{Multidimensional extremal problems} 

\subsubsection{Preliminaries} Our notation and terminology for Fourier analysis is standard and follows \cite{SW}. From now on we shall denote vectors in $\R^N$ or $\C^N$ with bold font (e.g. $\x$, $\y$, $\z$) and numbers in $\R$ or $\C$ with regular font (e.g. $x,y,z$). For $\z = (z_1, z_2, \ldots, z_N) \in \C^N$ we let $|\cdot|$ denote the usual Hermitian norm $|\z| = (|z_1|^2 + \ldots + |z_N|^2)^{1/2}$ and define a second norm $\|\cdot\|$ by 
$$\|\z\| = \sup \left\{ \left|\sum_{n=1}^N z_n\,t_n\right|; \ \t \in \R^N \ {\rm and} \ |\t|\leq 1\right\}.$$
Let $F: \C^N \to \C$ be an entire function of $N$ complex variables which is not identically zero. We say that $F$ has {\it exponential type} if
$$\limsup_{\|\z\|\to \infty} \|\z\|^{-1}\,\log|F(\z)| =: \tau(F) < \infty.$$
When $N=1$ this is the classical definition of exponential type given in \eqref{Intro_exp_type} and, when $N \geq 2$, our definition is a particular case of a more general definition of exponential type with respect to a compact, convex and symmetric set $K \subset \R^N$ (cf. \cite[pp. 111-112]{SW}). In our case this convex set $K$ is the unit Euclidean ball.

\smallskip

Let $\nu > -1$ and $\delta > 0$ be given parameters. Given a function $\mc{F}:\R^N \to \R$ we address here the problem of finding entire functions $\mc {L}:\C^N \to \C$ and $\mc{M}:\C^N \to \C$ such that 
\begin{enumerate}
\item[(i)] $\mc{L}$ and $\mc{M}$ have exponential type at most $\delta$.
\smallskip
\item[(ii)] $\mc{L}$ and $\mc{M}$ are real valued on $\R^N$ and
\begin{equation*}\label{Intro_EP1}
\mc{L}(\x) \leq \mc{F}(\x) \leq \mc{M}(\x)
\end{equation*}
for all $\x \in \R^N$.
\smallskip
\item[(iii)] Subject to (i) and (ii), the values of the integrals
\begin{equation*}\label{Intro_EP2}
\int_{\R^N} \big\{{\mc M}(\x) - \mc{F}(\x)\big\}\,|\x|^{2\nu + 2 - N}\,\d\x 
\end{equation*}
and 
\begin{equation*}\label{Intro_EP3}
\int_{\R^N} \big\{\mc{F}(\x) - {\mc L}(\x)\big\}\,|\x|^{2\nu + 2 - N}\,\d\x
\end{equation*}
are minimized.
\end{enumerate}
\smallskip

In the setting $N=1$ and $\nu = -1/2$ this is a very classical problem in harmonic analysis. It was introduced by A. Beurling (unpublished work) in the late 1930's for $f(x) = \sgn (x)$ in connection with bounds for almost periodic functions. Later, with the observation that $\chi_{[a,b]}(x) = \tfrac{1}{2}\{\sgn(x-a) + \sgn(b-x)\}$, A. Selberg constructed majorants and minorants of exponential type for characteristic functions of intervals, a simple yet very useful tool for number theoretical applications, cf.\ \cite{BMV, Ga, GG, S1, S2}. The survey \cite{V} by J. D. Vaaler is an excellent source for a historical perspective on these extremal functions and their early applications. Other interesting classical works related to this theory include \cite{GV, K2, M, M2, Na}. Among the more recent works we highlight \cite{Car, CV3, Gan, GL, ILS, L1, L3, Lu}.

\smallskip

The unweighted one-dimensional extremal problem is relatively well understood. Besides the many applications listed in the references above, there are general frameworks to construct optimal majorants and minorants of exponential type for certain classes of even, odd and truncated functions \cite{CL, CL2, CLV, CV2}. These frameworks include, for instance, the functions $f_{a,b}(x) = \log\big((x^2 + b^2)/(x^2 + a^2)\big)$, where $0 \leq a < b$\,; $g(x) = \arctan(1/x) - x/(1+x^2)$ and $h(x) = 1 - x\arctan(1/x)$. The extremal functions associated to $f_{a,b}$, $g$ and $h$ have been recently used in connection to the theory of the Riemann zeta-function to obtain improved bounds for $\zeta(s)$ in the critical strip \cite{CC,CS}, and improved bounds for the argument function $S(t)$ and its antiderivative $S_1(t)$ on the critical line \cite{CCM}, all under the Riemann hypothesis. The machinery of de Branges spaces, that shall be explored in this paper,  has also proven useful in the recent works  \cite{CCLM, L4} in connection to extremals for characteristic functions of intervals and bounds for the pair correlation of zeros of the Riemann zeta-function.

\smallskip

As pointed out in the beginning of this introduction, in the multidimensional setting the picture changes considerably, as there is in the literature only the previous work of Holt and Vaaler \cite{HV} for characteristic functions of Euclidean balls addressing this problem. A periodic analogue of this work with applications to Erd\"{o}s-Tur\'{a}n inequalities was considered by Li and Vaaler in \cite{LV}. 

\subsubsection{Base case: the exponential function} We shall use our Theorem \ref{Intro_Thm1} to obtain the solution of the multidimensional Beurling-Selberg extremal problem posed above for a wide class of radial functions. The first step in this direction will be to specialize the general form of Theorem \ref{Intro_Thm1} to a particular family of homogeneous de Branges spaces, suitable to treat the power weights $|\x|^{2\nu +2 - N}$. We now present the basic elements associated to these spaces. Further details will be presented in Section \ref{Hom_spaces}.

\smallskip

For $\nu > -1$ let $A_{\nu}:\C \to \C$ and $B_{\nu}:\C \to \C$ be real entire functions defined by 
\begin{equation}\label{Intro_A_nu}
A_{\nu}(z) = \sum_{n=0}^{\infty} \frac{(-1)^n \big(\tfrac12 z\big)^{2n}}{n!(\nu +1)(\nu +2)\ldots(\nu+n)}
\end{equation}
and
\begin{equation}\label{Intro_B_nu}
B_{\nu}(z) = \sum_{n=0}^{\infty} \frac{(-1)^n \big(\tfrac12 z\big)^{2n+1}}{n!(\nu +1)(\nu +2)\ldots(\nu+n+1)}.
\end{equation}
These functions are related to the classical Bessel functions by 
\begin{align}
A_{\nu}(z) &= \Gamma(\nu +1) \left(\tfrac12 z\right)^{-\nu} J_{\nu}(z),\label{Intro_Bessel1}\\
B_{\nu}(z) & = \Gamma(\nu +1) \left(\tfrac12 z\right)^{-\nu} J_{\nu+1}(z).\label{Intro_Bessel2}
\end{align}
The functions $A_{\nu}$ and $B_{\nu}$ are the suitable generalizations of $\cos z$ and $\sin z$ for our purposes (note that $A_{-1/2}(z) = \cos z$ and $B_{-1/2}(z) = \sin z$). Note also that both $A_{\nu}$ and $B_{\nu}$ have only real, simple zeros and have no common zeros. Furthermore, they satisfy the following system of differential equations 
\begin{equation}\label{Intro_Diff_Eqs}
A_{\nu}'(z) = - B_{\nu}(z) \ \ \ {\rm and} \ \ \ B_{\nu}'(z) = A_{\nu}(z) - (2\nu +1)z^{-1}B_{\nu}(z),
\end{equation}
which can be seen directly from \eqref{Intro_A_nu} and \eqref{Intro_B_nu}. We shall see that the entire function 
$$E_{\nu}(z) := A_{\nu}(z) - iB_{\nu}(z)$$
 is a Hermite-Biehler function that satisfies properties (P1) \!-\! (P4) listed above, and therefore the general machinery of Theorem \ref{Intro_Thm1} will be available. We denote by $K_{\nu}$ the reproducing kernel of the space $\mc{H}(E_{\nu})$.

\smallskip

For $\delta >0$ and $N \in \N$ we denote by $\E_{\delta}^{N}$ the set of all entire functions $F:\C^N \to \C$ of exponential type at most $\delta$. Given a real valued function $\mc{F}:\R^N \to \R$ we write
\begin{align*}
\E_{\delta}^{N-} (\mc{F}) &:= \big\{\mc{L}\in \E_{\delta}^N;\ \mc{L}(\x) \leq \mc{F}(\x), \forall \x \in \R^N \big\},\\[0.5em]
\E_{\delta}^{N+}(\mc{F}) &:= \big\{\mc{M} \in \E_{\delta}^N;\ \mc{M}(\x) \geq \mc{F}(\x), \forall \x \in \R^N \big\}.\\[-1em]
\end{align*}
We are now able to state our second result.

\begin{theorem}\label{Thm1}
Let $\lambda >0$ and $\mc{F}_{\lambda}(\x) = e^{-\lambda |\x|}$. Let
\begin{align*}
U_{\nu}^{N-}(\delta, \lambda)  & = \inf\left\{ \int_{\R^N} \left\{e^{-\lambda|\x|} - \mc{L}(\x)\right\}\,|\x|^{2\nu+2 - N}\,\dxx; \  \mc{L} \in \E_{\delta}^{N-} (\mc{F}_{\lambda}) \right\},\\
U_{\nu}^{N+}(\delta, \lambda)  & = \inf\left\{ \int_{\R^N} \left\{\mc{M}(\x) - e^{-\lambda|\x|}\right\}\,|\x|^{2\nu+2 - N}\,\dxx; \  \mc{M} \in \E_{\delta}^{N+} (\mc{F}_{\lambda}) \right\}.
\end{align*}
The following properties hold:
\begin{enumerate}
\item[(i)] If $\kappa > 0$ then $U_{\nu}^{N\pm}(\delta, \lambda) = \kappa^{2\nu +2} \,U_{\nu}^{N\pm}(\kappa\delta, \kappa\lambda)$.
\smallskip
\item[(ii)] For any $\delta >0$ we have
\begin{equation*}\label{Intro_rel_N_1}
U_{\nu}^{N\pm}(\delta, \lambda) = \tfrac12\, \omega_{N-1}\, U_{\nu}^{1\pm}(\delta, \lambda),
\end{equation*}
where $\omega_{N-1} = 2 \pi^{N/2} \,\Gamma(N/2)^{-1}$ is the surface area of the unit sphere in $\R^N$.
\smallskip
\item[(iii)] When $N=1$ and $\delta = 2$ we compute explicitly:
\begin{align}
U_{\nu}^{1-}(2, \lambda)&= \frac{2 \,\Gamma(2\nu +2)}{\lambda^{2\nu +2}}- \sum_{A_{\nu}(\xi)=0}\frac{e^{-\lambda|\xi|}}{c_{\nu}\,K_{\nu}(\xi, \xi)}, \label{Intro_value_min}\\
U_{\nu}^{1+}(2, \lambda)& = \sum_{B_{\nu}(\xi)=0}\frac{e^{-\lambda|\xi|}}{c_{\nu}\,K_{\nu}(\xi, \xi)}\, - \, \frac{2 \,\Gamma(2\nu +2)}{\lambda^{2\nu +2}},\label{Intro_value_max}
\end{align}
where $c_{\nu} = \pi\, 2^{-2\nu -1}\, \Gamma(\nu +1)^{-2}.$

\smallskip

\item[(iv)] There exists a pair of real entire functions of $N$ complex variables $($which are radial when restricted to $\R^N$$)$ $\z \mapsto \mc{L}_{\nu}(\delta, \lambda, \z) \in \E_{\delta}^{N-} (\mc{F}_{\lambda}) $ and $\z \mapsto \mc{M}_{\nu}(\delta, \lambda, \z) \in \E_{\delta}^{N+} (\mc{F}_{\lambda})$ such that
\begin{equation*}
U_{\nu}^{N-}(\delta, \lambda) = \int_{\R^N} \left\{e^{-\lambda|\x|} - \mc{L}_{\nu}(\delta, \lambda, \x) \right\}\,|\x|^{2\nu+2-N}\,\dxx
\end{equation*}
and
\begin{equation*}
U_{\nu}^{N+}(\delta, \lambda) = \int_{\R^N} \left\{\mc{M}_{\nu}(\delta, \lambda, \x) - e^{-\lambda|\x|}  \right\}\,|\x|^{2\nu+2-N}\,\d\x.
\end{equation*}
\end{enumerate}
\end{theorem}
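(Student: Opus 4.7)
The overall strategy is to reduce the $N$-dimensional problem to a weighted one-dimensional problem with weight $|x|^{2\nu+1}$, and then invoke Theorem \ref{Intro_Thm1} with $E=E_\nu$. Part (i) follows by the substitution $\mc{L}(\z)\mapsto\mc{L}(\kappa\z)$, which sends $\E_\delta^{N-}(\mc{F}_\lambda)$ onto $\E_{\kappa\delta}^{N-}(\mc{F}_{\kappa\lambda})$ and, via the change of variable $\y=\kappa\x$, scales the extremal integral by $\kappa^{-(2\nu+2)}$ (the argument for the majorants is identical). For part (ii), I would first argue that one may restrict to radial admissible candidates: the average $\mc{L}^{\circ}(\z):=\int_{SO(N)}\mc{L}(U\z)\,dU$ preserves exponential type (since $\|U\z\|=\|\z\|$ for $U\in SO(N)$), preserves admissibility, and preserves the integral against the radial weight. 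The classical Stein--Weiss correspondence (cf.\ \cite[pp.~111-112]{SW}) then furnishes a bijection between even entire functions $m$ on $\C$ of exponential type $\leq\delta$ and entire functions on $\C^N$ of type $\leq\delta$ whose restriction to $\R^N$ is radial: given even $m$, write $m(w)=\widetilde{m}(w^2)$ and set $\mc{M}(\z):=\widetilde{m}(z_1^2+\cdots+z_N^2)$, the Cauchy--Schwarz bound $|\sqrt{z_1^2+\cdots+z_N^2}|\leq\|\z\|$ giving $\tau(\mc{M})\leq\delta$. Polar coordinates finally yield
\begin{equation*}
\int_{\R^N} f(|\x|)\,|\x|^{2\nu+2-N}\,\dxx \;=\; \omega_{N-1}\int_0^\infty f(r)\,r^{2\nu+1}\,\dr \;=\; \tfrac12\omega_{N-1}\int_\R f(|x|)\,|x|^{2\nu+1}\,\dx,
\end{equation*}
which accounts for the factor $\tfrac12\omega_{N-1}$ in the reduction formula.

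For part (iii), I would apply Theorem \ref{Intro_Thm1} with $E=E_\nu$. Using \eqref{Intro_A_nu}--\eqref{Intro_Diff_Eqs} and the Bessel representations \eqref{Intro_Bessel1}--\eqref{Intro_Bessel2}, it is routine to verify that $E_\nu$ has exponential type $1$ and satisfies (P1)--(P4); the classical asymptotic $J_\nu(x)^2+J_{\nu+1}(x)^2\sim 2/(\pi x)$ yields $|E_\nu(x)|^{-2}\sim c_\nu|x|^{2\nu+1}$ as $|x|\to\infty$, so the integrability assumption $\int e^{-\lambda|x|}|E_\nu(x)|^{-2}\,\dx<\infty$ is satisfied. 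The critical bridge, which I expect to be the main obstacle and which is the content of the development in Section \ref{Hom_spaces}, is the identity
\begin{equation*}
\int_\R F(x)\,|E_\nu(x)|^{-2}\,\dx \;=\; c_\nu\int_\R F(x)\,|x|^{2\nu+1}\,\dx,
\end{equation*}
valid for entire $F$ of exponential type $\leq 2=2\tau(E_\nu)$ for which either side is absolutely convergent; this is a Plancherel/quadrature formula for the homogeneous de Branges space $\H(E_\nu)$, reflecting that both sides reduce to the same sum of $F$-values at the zeros of $A_\nu$ (or of $B_\nu$). Granted this identity, the bounds \eqref{Intro-L-integral}--\eqref{Intro-M-integral} translate into matching bounds on $\int L(x)|x|^{2\nu+1}\,\dx$ and $\int M(x)|x|^{2\nu+1}\,\dx$ (after replacing $L$ and $M$ by their even parts, which preserves admissibility and both integrals since $e^{-\lambda|x|}$ and $|x|^{2\nu+1}$ are even); subtracting from the elementary evaluation $\int_\R e^{-\lambda|x|}|x|^{2\nu+1}\,\dx = 2\Gamma(2\nu+2)/\lambda^{2\nu+2}$ then delivers \eqref{Intro_value_min}--\eqref{Intro_value_max}.

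For part (iv), the one-dimensional extremal functions $z\mapsto L(A_\nu^2,\lambda,z)$ and $z\mapsto M(B_\nu^2,\lambda,z)$ furnished by Theorem \ref{Intro_Thm1}, which may be taken even in view of the symmetry of the problem, lift via the Stein--Weiss correspondence of part (ii) to entire functions $\z\mapsto\mc{L}_\nu(2,\lambda,\z)$ and $\z\mapsto\mc{M}_\nu(2,\lambda,\z)$ on $\C^N$ of exponential type $\leq 2$ that are radial on $\R^N$ and realize the infima $U_\nu^{N\pm}(2,\lambda)$; the scaling of part (i) then supplies analogous extremals for every $\delta>0$.
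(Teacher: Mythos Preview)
Your proposal is correct and follows essentially the same route as the paper: scaling for (i), radial symmetrization plus the lift $\psi_N$ (what you call the Stein--Weiss correspondence) for (ii), Theorem~\ref{Intro_Thm1} applied to $E_\nu$ together with the Plancherel identity \eqref{Lem17_ii} for (iii), and lifting the one-dimensional extremals via $\psi_N$ and rescaling for (iv).

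One point deserves sharpening. The bridge identity you state,
\[
\int_\R F(x)\,|E_\nu(x)|^{-2}\,\dx \;=\; c_\nu\int_\R F(x)\,|x|^{2\nu+1}\,\dx,
\]
is recorded in the paper (Lemma~\ref{Sec5_rel_facts}(ii)) only for $F=|G|^2$ with $G\in\H(E_\nu)$, not for arbitrary entire $F$ of type $\leq 2$. To apply it to a competitor $M$ (or $L$) one first invokes the $UU^*$ factorization of Lemma~\ref{nonnegl1-tohe}: since $M\geq 0$ on $\R$ and lies in $L^1(\R,|E_\nu|^{-2}\dx)$, one writes $M=WW^*$ with $W\in\H(E_\nu)$ and then applies \eqref{Lem17_ii} to $|W|^2$. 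For the minorant, one factorizes the nonnegative difference $M(B_\nu^2,\lambda,\cdot)-L$ the same way. Your heuristic that ``both sides reduce to the same quadrature sum'' is the right intuition, but it is the $UU^*$ step that makes the reduction rigorous; without it the identity does not follow for general $F$.
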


It is clear from parts (i) and (ii) of Theorem \ref{Thm1} that we can calculate $U_{\nu}^{N\pm}(\delta, \lambda)$ for any $N \geq 1$ and $\delta >0$ from the explicit formulas for the case $N=1$ and $\delta = 2$ presented in part (iii). Theorem \ref{Thm1} provides thus a complete description of the situation. Its proof will be presented in Section  \ref{Hom_spaces}.
 
\smallskip

For general $\nu$ the qualitative behavior of the zeros of $A_{\nu}$ and $B_{\nu}$ is not so different from that of the zeros of $\cos z$ and $\sin z$, and we have good asymptotics for the sums over the zeros in \eqref{Intro_value_min} and \eqref{Intro_value_max}. In fact, note that $A_{\nu}$ is an even function with $A_{\nu}(0)\neq 0$, while $B_{\nu}$ is an odd function with a simple zero at the origin and, from \eqref{Intro_Bessel1} and \eqref{Intro_Bessel2}, the positive zeros of $A_{\nu}$ and $B_{\nu}$ are the positive zeros of $J_{\nu}$ and $J_{\nu+1}$, respectively. From \cite[Section 15.4]{W} we note that, for large $m$, the Bessel function $J_\nu(x)$ has exactly $m$ zeros in $\big(0,m\pi+\tfrac{\pi\nu}{2}+\tfrac{\pi}{4}\big]$. Also, from \eqref{Intro_Def_K}, \eqref{Intro_Bessel1}, \eqref{Intro_Bessel2} and \eqref{Intro_Diff_Eqs} we find, for $\xi >0$, that
\begin{align}\label{Intro_asymp1}
K_{\nu}(\xi, \xi) = 2^{-1}\, c_{\nu}^{-1}\, \xi^{-2\nu -1} \Big\{ \xi J_{\nu}(\xi)^2 + \xi J_{\nu+1}(\xi)^2 - \!(2\nu+1)\,J_{\nu}(\xi)\,J_{\nu+1}(\xi)\Big\}.
\end{align}
From the well known asymptotics for Bessel functions
\begin{equation}\label{Asymptotic_Bessel_functions}
J_{\nu}(\xi) =  \sqrt{\frac{2}{\pi \xi}} \,  \cos\left(\xi - \tfrac{\pi \nu}{2} - \tfrac{\pi}{4}\right) + O_{\nu}\big(\xi^{-{3/2}}\big)
\end{equation}
as $\xi \to \infty$, we have
\begin{equation}\label{Intro_asymp2}
\lim_{\xi \to \infty} \Big\{ \xi J_{\nu}(\xi)^2 + \xi J_{\nu+1}(\xi)^2 - \!(2\nu+1)\,J_{\nu}(\xi)\,J_{\nu+1}(\xi)\Big\} = 2 \pi^{-1},
\end{equation}
and then it follows from \eqref {Intro_value_min} - \eqref{Intro_asymp2} that 
\begin{align*}
\big|\,U_{\nu}^{1-}(2, \lambda) \big| & \ll_{\nu} \frac{1}{\lambda^{2\nu +2}} + \sum_{n=1}^{\infty} n^{2\nu+1}\,e^{-\lambda j_{\nu,n}} \\
& \ll_{\nu} \frac{1}{\lambda^{2\nu +2}}\,
\end{align*}
as $\lambda \to \infty$, where $\{j_{\nu,n}\}_{n=1}^{\infty}$ is the sequence of positive zeros of $J_{\nu}$ (in this work we use Vinogradov's notation $f \ll g$ (or $f = O(g)$) to mean that $|f|$ is less than or equal to a constant times $|g|$, and indicate the parameters of dependence of the constant in the subscript). Analogously, from \eqref {Intro_value_max} we find
\begin{align*}
\big|U_{\nu}^{1+}(2,\lambda) \big|& \ll_{\nu} \frac{1}{c_{\nu} K_{\nu}(0,0)} +  \sum_{n=1}^{\infty} n^{2\nu+1}\,e^{-\lambda j_{\nu+1,n}}  +  \frac{1}{\lambda^{2\nu +2}}\\
&  \ll_{\nu} 1
\end{align*}
as $\lambda \to \infty$. To complete the asymptotic description we verify in the beginning of Section \ref{Sec_Radial_Functions} that 
\begin{equation}\label{Intro_open_asymp}
\big|U_{\nu}^{1\pm}(2,\lambda) \big| \ll_{\nu}  \lambda
\end{equation}
as $\lambda \to 0$. 

\subsubsection{Extension to a class of radial functions} Motivated by the asymptotic description of \eqref {Intro_value_min} and \eqref {Intro_value_max} presented above, we consider two classes of nonnegative Borel measures $\mu$ on $(0,\infty)$ for a given parameter $\nu>-1$. For the minorant problem we consider the class of measures $\mu$ satisfying the condition
\begin{equation}\label{Intro_mu_1}
\int_{0}^{\infty} \frac{\lambda}{1 + \lambda^{2\nu + 3}}\,\dmu < \infty,
\end{equation}
whereas for the majorant problem we consider the class of measures $\mu$ satisfying the more restrictive condition
\begin{equation}\label{Intro_mu_2}
\int_{0}^{\infty} \frac{\lambda}{1 + \lambda}\,\dmu < \infty.
\end{equation}
For $\mu$ satisfying \eqref{Intro_mu_1} or \eqref{Intro_mu_2} we define the radial function $\mc{G}_{\mu}: \R^N \to \R \cup \{\infty\}$ by 
\begin{equation}\label{Intro_def_g_mu}
\mc{G}_{\mu}(\x) = \int_0^{\infty}\left\{ e^{-\lambda |\x|} - e^{-\lambda}\right\}\dmu.
\end{equation}
Observe that we might have $\mc{G}_{\mu}(0) = +\infty$ in case $\mu$ satisfies \eqref{Intro_mu_1} but not \eqref{Intro_mu_2}, and in this situation only the minorant problem will be well-posed. Notice also the introduction of the term $-e^{-\lambda}$ to generate a decay of order $\lambda$ at the origin and make use of the full class of measures satisfying \eqref{Intro_mu_1} or \eqref{Intro_mu_2}. This idea is reminiscent of \cite{CV2, CV3}. For $\kappa >0$ we define a new measure $\mu_{\kappa}$ on the Borel subsets $E \subseteq (0,\infty)$ by putting
\begin{align}\label{def_mu_kappa}
\mu_{\kappa}(E) := \mu(\kappa E),
\end{align}
where 
$$\kappa E = \{ \kappa x; \ x \in E\}.$$
Note that $\mu_{\kappa}$ satisfies \eqref{Intro_mu_1} or \eqref{Intro_mu_2} whenever $\mu$ does. We are now able to state our third result.

\begin{theorem}\label{Thm2}
Let $\mu$ be a nonnegative Borel measure on $(0,\infty)$ satisfying \eqref{Intro_mu_1} for the minorant problem or \eqref{Intro_mu_2} for the majorant problem. Let $\mc{G}_{\mu}$ be defined by \eqref{Intro_def_g_mu} and write
\begin{align*}
U_{\nu}^{N-}(\delta, \mu)  & = \inf\left\{ \int_{\R^N} \big\{\mc{G}_{\mu}(\x) - \mc{L}(\x)\big\}\,|\x|^{2\nu+2-N}\,\dxx; \  \mc{L} \in \E_{\delta}^{N-} (\mc{G}_{\mu}) \right\},\\
U_{\nu}^{N+}(\delta, \mu) & = \inf\left\{ \int_{\R^N} \big\{\mc{M}(\x) - \mc{G}_{\mu}(\x)\big\}\,|\x|^{2\nu+2-N}\,\dxx; \  \mc{M} \in \E_{\delta}^{N+}(\mc{G}_{\mu})  \right\}.
\end{align*}
The following properties hold:
\smallskip
\begin{enumerate}
\item[(i)] If $0 < \kappa$ then $U_{\nu}^{N\pm}(\delta, \mu) = \kappa^{2\nu +2} \,U_{\nu}^{N\pm}(\kappa\delta, \mu_{\kappa^{-1}})$.
\smallskip
\item[(ii)] For any $\delta >0$ we have
\begin{equation*}
U_{\nu}^{N\pm}(\delta, \mu)= \int_0^{\infty} U_{\nu}^{N\pm}(\delta, \lambda)\,\dmu = \tfrac12 \, \omega_{N-1}\int_0^{\infty} U_{\nu}^{1\pm}(\delta, \lambda)\,\dmu.
\end{equation*}
where $\omega_{N-1} = 2 \pi^{N/2} \,\Gamma(N/2)^{-1}$.

\smallskip

\item[(iii)] There exists a pair of real entire functions of $N$ complex variables $($which are radial when restricted to $\R^N$$)$ $\z \mapsto \mc{L}_{\nu}(\delta, \mu, \z) \in \E_{\delta}^{N-} (\mc{G}_{\mu}) $ and $\z \mapsto \mc{M}_{\nu}(\delta, \mu, \z) \in \E_{\delta}^{N+} (\mc{G}_{\mu})$ such that
\begin{equation*}
U_{\nu}^{N-}(\delta, \mu) = \int_{\R^N} \big\{\mc{G}_{\mu}(\x) - \mc{L}_{\nu}(\delta, \mu, \x) \big\}\,|\x|^{2\nu+2-N}\,\dxx
\end{equation*}
and
\begin{equation*}
U_{\nu}^{N+}(\delta, \mu) = \int_{\R^N} \big\{\mc{M}_{\nu}(\delta, \mu, \x) - \mc{G}_{\mu}(\x)  \big\}\,|\x|^{2\nu+2-N}\,\dxx.
\end{equation*}
\end{enumerate}
\end{theorem}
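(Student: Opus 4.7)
The plan is to construct the extremals for $\mc{G}_\mu$ by integrating the extremals of Theorem \ref{Thm1} in the parameter $\lambda$, and to derive optimality from the nodal interpolation property this integration inherits. Concretely I would set
\begin{align*}
\mc{L}_\nu(\delta,\mu,\z) & := \int_0^\infty \big\{\mc{L}_\nu(\delta,\lambda,\z) - e^{-\lambda}\big\}\,\dmu,\\
\mc{M}_\nu(\delta,\mu,\z) & := \int_0^\infty \big\{\mc{M}_\nu(\delta,\lambda,\z) - e^{-\lambda}\big\}\,\dmu,
\end{align*}
paralleling the definition of $\mc{G}_\mu$. The subtracted $e^{-\lambda}$ produces integrands of size $O(\lambda)$ as $\lambda \to 0^+$, locally uniformly in $\z$, consistent with \eqref{Intro_open_asymp}, while the asymptotics preceding the statement of Theorem \ref{Thm2} show rapid decay (minorant) or boundedness (majorant) as $\lambda \to \infty$. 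Hypotheses \eqref{Intro_mu_1} and \eqref{Intro_mu_2} are calibrated precisely for absolute convergence in the respective cases. Differentiation under the integral, followed by a Phragm\'en-Lindel\"of argument on each complex line, confirms that $\mc{L}_\nu(\delta,\mu,\cdot)$ and $\mc{M}_\nu(\delta,\mu,\cdot)$ are real entire functions of $N$ complex variables of exponential type at most $\delta$, radial on $\R^N$, and minorize / majorize $\mc{G}_\mu$ pointwise by monotone integration of Theorem \ref{Thm1}(iv).

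Part (iii) together with the ``$\leq$'' direction of (ii) then follows by Fubini,
\begin{align*}
\int_{\R^N}\{\mc{G}_\mu(\x) - \mc{L}_\nu(\delta,\mu,\x)\}|\x|^{2\nu+2-N}\,\dxx & = \int_0^\infty \!\!\int_{\R^N}\{e^{-\lambda|\x|} - \mc{L}_\nu(\delta,\lambda,\x)\}|\x|^{2\nu+2-N}\,\dxx\,\dmu \\
& = \int_0^\infty U_\nu^{N-}(\delta,\lambda)\,\dmu,
\end{align*}
and analogously for the majorant. Part (i) is a direct change of variables $\x = \kappa\y$: this sends exponential type $\delta$ to $\kappa\delta$, replaces $\mu$ by $\mu_{\kappa^{-1}}$, and the discrepancy $\mc{G}_\mu(\kappa\y) - \mc{G}_{\mu_{\kappa^{-1}}}(\y) = \int(e^{-\eta} - e^{-\eta/\kappa})\,d\mu_{\kappa^{-1}}(\eta)$ is a constant in $\y$ that is absorbed into the test function, while the Jacobian supplies the factor $\kappa^{2\nu+2}$.

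For the matching lower bound in (ii), let $\mc{L} \in \E_\delta^{N-}(\mc{G}_\mu)$ attain a finite error. Averaging $\mc{L}$ over $SO(N)$ preserves the Euclidean exponential-type bound, preserves the majorization of the radial $\mc{G}_\mu$, and preserves the value of the error (both by rotation-invariance of $|\x|^{2\nu+2-N}\,\dxx$ and Fubini on the nonnegative integrand), so I may assume $\mc{L}(\x) = \ell(|\x|)$ with $\ell$ even entire of type $\leq\delta$. In polar coordinates the task becomes the one-dimensional inequality
$$\int_{-\infty}^\infty\{\mc{G}_\mu(|x|) - \ell(x)\}|x|^{2\nu+1}\,\dx \geq \int_0^\infty U_\nu^{1-}(\delta,\lambda)\,\dmu.$$
The central structural fact is that each Theorem \ref{Intro_Thm1} extremal $\mc{L}_\nu(\delta,\lambda,\cdot)$ interpolates $e^{-\lambda|\cdot|}$ at the zeros $\{\xi_k\}$ of the appropriately rescaled $A_\nu$; integration in $\lambda$ propagates this to $\mc{L}_\nu(\delta,\mu,\xi_k) = \mc{G}_\mu(|\xi_k|)$. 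Therefore the entire function $g := \mc{L}_\nu(\delta,\mu,\cdot) - \ell$ of exponential type $\leq\delta$ is non-negative at every node, and writing
$$g = \big(\mc{G}_\mu(|\cdot|) - \ell\big) - \big(\mc{G}_\mu(|\cdot|) - \mc{L}_\nu(\delta,\mu,\cdot)\big)$$
exhibits $g$ as the difference of two non-negative $|x|^{2\nu+1}$-integrable functions. Applying the Gaussian-type quadrature formula for $\mc{H}(E_{\nu,\delta/2})$ then gives $\int_{-\infty}^\infty g(x)|x|^{2\nu+1}\,\dx = \sum_k g(\xi_k)/w_k \geq 0$, with positive weights $w_k$ proportional to $c_\nu K_\nu(\xi_k,\xi_k)$ as in Theorem \ref{Thm1}(iii). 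This is the desired inequality. The majorant case is parallel, with zeros of $B_\nu$ in place of those of $A_\nu$ and with $\xi = 0$ becoming a node, which is precisely the reason for the stronger hypothesis \eqref{Intro_mu_2}.

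The principal obstacle I anticipate is the technical cascade of dominated-convergence and Fubini interchanges: verifying that the interpolation identity $\mc{L}_\nu(\delta,\mu,\xi_k) = \mc{G}_\mu(|\xi_k|)$ genuinely survives the $\lambda$-integration (requiring pointwise-at-the-node control uniform in $\lambda$), and that the quadrature formula applies to $g$ even though $\mc{L}_\nu(\delta,\mu,\cdot)$ and $\ell$ need not be separately integrable against $|x|^{2\nu+1}$. Both should follow from the same quantitative estimates on the Theorem \ref{Thm1} extremals that render the defining integrals of $\mc{L}_\nu(\delta,\mu,\cdot)$ and $\mc{M}_\nu(\delta,\mu,\cdot)$ absolutely convergent, combined with \eqref{Intro_mu_1} or \eqref{Intro_mu_2}; the bulk of the bookkeeping lies near $\lambda = 0$ and $\lambda = \infty$.
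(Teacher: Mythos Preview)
Your overall strategy matches the paper's: integrate the $\lambda$-extremals to build candidates, compute their error by Fubini, and prove optimality by exploiting that the integrated extremal inherits the interpolation property at the zeros of $A_\nu$ (resp.\ $B_\nu$). Part (i) is handled identically. There are, however, two places where the paper proceeds differently and where your proposal is genuinely incomplete.

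\textbf{Existence: showing the integrated object is entire of exponential type.} You define $\mc{L}_\nu(\delta,\mu,\z)$ by integrating $\mc{L}_\nu(\delta,\lambda,\z)-e^{-\lambda}$ over $\lambda$ and assert that differentiation under the integral plus a Phragm\'en--Lindel\"of argument gives an entire function of type $\leq\delta$. The available quantitative control (Proposition \ref{Prop10}) is only for \emph{real} $x$; no $\lambda$-uniform bound on $\mc{L}_\nu(\delta,\lambda,\z)$ for complex $\z$ is established, so absolute convergence of your defining integral on compact subsets of $\C^N$ is not justified. The paper sidesteps this entirely: it works on $\R^N$, shows that $\x\mapsto \mc{G}_\mu(\x)$ and $\x\mapsto D_\nu^-(\mu,\x):=\int_0^\infty\{e^{-\lambda|\x|}-\mc{L}_\nu(2,\lambda,\x)\}\,\dmu$ are tempered distributions with the \emph{same} Fourier transform outside $\overline{B}(\pi^{-1})$, and invokes the Paley--Wiener theorem for distributions to conclude that their difference is a.e.\ an entire function of type $\leq 2$. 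Because it is not clear that these are tempered distributions for arbitrary $N$, the paper first carries this out in the convenient dimension $N=\lfloor 2\nu+4\rfloor$, then descends to $N=1$ by radial restriction (Lemmas \ref{type-to-ntype} and \ref{Lemma19_HV}), then lifts to general $N$ via $\psi_N$. Your direct route may be salvageable with additional $\lambda$-uniform complex estimates on the interpolants, but as written it is a gap.

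\textbf{Optimality: the quadrature step.} You reduce to showing $\int g(x)|x|^{2\nu+1}\,\dx\geq 0$ for $g=\mc{L}_\nu(\delta,\mu,\cdot)-\ell$, and then invoke a ``Gaussian-type quadrature formula for $\mc{H}(E_{\nu,\delta/2})$'' to rewrite this as $\sum_k g(\xi_k)/w_k$. That quadrature identity is \emph{not} a general fact about real entire functions of type $\leq\delta$ in $L^1(\R,|x|^{2\nu+1}\,\dx)$; it holds a priori only for functions of the form $UU^*$ with $U\in\mc{H}(E_\nu)$ (via Parseval/\cite[Theorem 22]{B}) and hence for differences $UU^*-VV^*$. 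The paper devotes all of Section \ref{Diff_Squares} to proving precisely the missing step (Theorem \ref{Cor22}): any real entire $G$ of type $\leq 2$ with $G\in L^1(\R,|E_\nu(x)|^{-2}\,\dx)$ admits a decomposition $G=UU^*-VV^*$ with $U,V\in\mc{H}(E_\nu)$. This in turn rests on a Plancherel--P\'olya theorem in $\mc{H}(E_\nu)$ (Theorem \ref{PP-in-HE}) and a general bounded-variation majorant construction (Theorem \ref{general-majorant}). Once that decomposition is in hand, your nodal-positivity argument is exactly the paper's proof of optimality (see \eqref{Sec6_Id_min_final}). So your optimality argument is correct in spirit but tacitly assumes a nontrivial structural theorem that the paper has to build separately.
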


The subtle change of notation from $U_{\nu}^{N\pm}(\delta, \lambda)$ in Theorem \ref{Thm1} to $U_{\nu}^{N\pm}(\delta, \mu)$ in Theorem \ref{Thm2} should cause no confusion since $\lambda>0$ will always represent a real valued parameter while $\mu$ will always represent a nonnegative Borel measure on $(0,\infty)$. From parts (i) and (ii) of Theorem \ref{Thm2} it is clear that we can calculate $U_{\nu}^{N\pm}(\delta, \mu)$ for any $N \geq 1$ and $\delta>0$ from the explicit formulas in part (iii) of Theorem \ref{Thm1} for the case $N=1$ and $\delta =2$.

\smallskip

Interesting examples of measures that we can consider are the power measures $\d\mu_{(\alpha)} (\lambda)= \lambda^{\alpha}\,\dl$. For $-2 < \alpha < 2\nu + 1$ these measures satisfy  \eqref{Intro_mu_1} and Theorem \ref{Thm2} provides an extremal minorant of exponential type for the family of radial functions
\begin{equation}\label{Intro_power_meas}
\mc{G}_{\mu_{(\alpha)}}(\x) =  \int_0^{\infty}\big\{ e^{-\lambda |\x|} - e^{-\lambda}\big\}\,\lambda^{\alpha}\,\dl = \Gamma(1+\alpha) \big\{|\x|^{-\alpha -1} - 1\big\}\,,
\end{equation}
when $\alpha \neq -1$, and
\begin{equation*}
\mc{G}_{\mu_{(-1)}}(\x) =  \int_0^{\infty}\big\{ e^{-\lambda |\x|} - e^{-\lambda}\big\}\,\lambda^{-1}\,\dl = -\log |\x|\,,
\end{equation*}
when $\alpha = -1$. If $-2 < \alpha < -1$ then these measures satisfy the more restrictive condition \eqref{Intro_mu_2} and Theorem \ref{Thm2} also provides an extremal majorant of exponential type for the corresponding functions in \eqref{Intro_power_meas}.

\smallskip

The proof of Theorem \ref{Thm2} will be carried out in Section \ref{Sec_Radial_Functions}. For optimality considerations we shall require a general decomposition of a function in $L^1\big(\R, |E_{\nu}(x)|^{-2}\,\dx\big)$ as a difference of two squares in $\mc{H}(E_{\nu})$.  We accomplish this in Section \ref{Diff_Squares}, where we prove two results of independent interest: (i) a general version of a theorem of Plancherel and P\'{o}lya \cite[\S 32]{PP} for the homogeneous spaces $\mc{H}(E_{\nu})$ (Theorem \ref{PP-in-HE}); and (ii) a general method for obtaining majorants and minorants of exponential type $\big($non-optimal with respect to the metric $L^1\big(\R, |E_{\nu}(x)|^{-2}\,\dx\big)$$\big)$ for functions that are locally of bounded variation (Theorem \ref{general-majorant}).

\subsection{Some applications} 

\subsubsection{Multidimensional Hilbert-type inequalities}

The solution of the extremal problem (in the classical case $N=1$ and $\nu = -1/2$) for a function $f:\R\to \R$ leads to upper and lower bounds for certain Hermitian forms involving the kernel $\widehat{f}$. When $f(x) = \sgn(x)$ this gives us the classical Hilbert's inequality (see \cite{V})
$$\left| \sum_{\stackrel{j,l=1}{j\neq l}}^{M} \frac{a_j \,\overline{a_l}}{j-l} \right| \leq \pi\sum_{j=1}^M |a_j|^2\,,$$
where $\{a_j\}_{j=1}^M$ is a sequence of complex numbers. This inequality was first proved by Hilbert (with the constant replaced by $2\pi$) and Schur (with the sharp constant $\pi$), and then generalized by Montgomery and Vaughan  \cite{MV} to replace the sequence of integers appearing in the denominator by a general sequence of well-spaced real numbers. For different kernels $\widehat{f}$, the corresponding Hilbert-type inequalities have appeared for instance in \cite{Car, CLV, CV2, GV, HV, L3}.

\smallskip

In our case, the extremal functions given by Theorems \ref{Thm1} and \ref{Thm2} provide generalizations of the Hilbert-type inequalities to a multidimensional context. We keep denoting 
$$\mc{F}_{\lambda} (\x) = e^{-\lambda |\x|},$$
and recall that its Fourier transform on $\R^N$ is the Poisson kernel \cite[pp. 6]{SW} given by 
\begin{equation}\label{Intro_FTF}
\widehat{\mc{F}}_{\lambda}(\y) = C_N \frac{\lambda}{(\lambda^2 + 4\pi^2 |\y|^2)^{(N+1)/2}}\,,
\end{equation}
with $C_N = 2^N \, \pi^{(N-1)/2}\, \Gamma(\tfrac{N+1}{2})$. For the next result we restrict ourselves to the case when $2\nu +2 - N$ is a nonnegative even integer, which we call $2r$. If a nonnegative Borel measure $\mu$ on $(0,\infty)$ satisfies \eqref{Intro_mu_1} or \eqref{Intro_mu_2}, by directly differentiating \eqref{Intro_FTF}, we observe that the function 
\begin{equation}\label{Intro_Q_a_y}
\mc{Q}_{\mu,r}(\y):=\big(4\pi^2\big)^{-r}\int_0^{\infty} (-\Delta)^{r} \widehat{\mc{F}}_{\lambda}(\y) \,\dmu
\end{equation}
is well-defined for $\y \neq0$, where $\Delta$ denotes the usual $N$-dimensional Laplacian.

\begin{theorem}\label{Thm4}
Let $\delta >0$ and $2\nu + 2 - N = 2r$ be a nonnegative even integer. Let $\{a_j\}_{j=1}^M$ be a sequence of complex numbers and $\{\y_j\}_{j=1}^M$ be a sequence of well-spaced vectors in $\R^N$, in the sense that $|\y_j - \y_l|\geq \delta$ for any $j \neq l$. The following propositions hold:
\smallskip
\begin{enumerate}
\item[(i)] If $\mu$ is a nonnegative Borel measure on $(0,\infty)$ satisfying \eqref{Intro_mu_1} and $\mc{Q}_{\mu,r}$ is defined as in \eqref{Intro_Q_a_y} then
\begin{equation*}
-U_{\nu}^{N-}(2\pi\delta, \mu)\,\sum_{j=1}^M |a_j|^2\leq \sum_{\stackrel{j,l=1}{j\neq l}}^{M} a_j\,\overline{a_l} \,\mc{Q}_{\mu,r}(\y_j - \y_l).
\end{equation*}
\item[(ii)] If $\mu$ is a nonnegative Borel measure on $(0,\infty)$ satisfying \eqref{Intro_mu_2} and $\mc{Q}_{\mu,r}$ is defined as in \eqref{Intro_Q_a_y} then
\begin{equation*}
\sum_{\stackrel{j,l=1}{j\neq l}}^{M} a_j\,\overline{a_l} \,\mc{Q}_{\mu,r}(\y_j - \y_l) \leq U_{\nu}^{N+}(2\pi\delta,\mu)\,\sum_{j=1}^M |a_j|^2.
\end{equation*}
\end{enumerate}
\end{theorem}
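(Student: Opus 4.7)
The plan is to reduce both bounds to the standard Bochner-positivity argument, using the extremal one-sided approximants from Theorem \ref{Thm2} to kill the off-diagonal contribution coming from the approximant's Fourier transform. Write $P(\x) := (x_1^2 + \cdots + x_N^2)^r = |\x|^{2r}$, an entire polynomial of degree $2r$.

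For part (i), I would let $\mc{L} := \mc{L}_\nu(2\pi\delta, \mu, \cdot)$ be the radial extremal minorant of exponential type at most $2\pi\delta$ from Theorem \ref{Thm2}(iii), and set
\[
\phi(\x) := P(\x)\bigl(\mc{G}_\mu(\x) - \mc{L}(\x)\bigr) \geq 0.
\]
Then $\phi \in L^1(\R^N)$ with $\int \phi\,\dxx = U_\nu^{N-}(2\pi\delta, \mu)$, and Bochner's theorem says $\widehat{\phi}$ is a continuous positive-definite function, so
\[
0 \leq \sum_{j,l=1}^{M} a_j\overline{a_l}\widehat{\phi}(\y_j - \y_l) = U_\nu^{N-}(2\pi\delta, \mu)\sum_{j=1}^M |a_j|^2 + \sum_{\stackrel{j,l=1}{j\neq l}}^M a_j\overline{a_l}\widehat{\phi}(\y_j - \y_l).
\]
It then suffices to identify $\widehat{\phi}(\y) = \mc{Q}_{\mu,r}(\y)$ whenever $|\y| \geq \delta$. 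Because $P\mc{L}$ is entire of exponential type at most $2\pi\delta$ (polynomial factors do not affect type) and $\mc{L}$ is a tempered distribution on $\R^N$ --- it is bounded above by the bounded function $\mc{G}_\mu$, and $P(\mc{G}_\mu - \mc{L}) \in L^1$ forces $\mc{L}$ itself to be tempered --- the Paley--Wiener--Schwartz theorem shows that $\widehat{P\mc{L}} = (4\pi^2)^{-r}(-\Delta)^r\widehat{\mc{L}}$ is a tempered distribution supported in the closed ball $\{|\y|\leq \delta\}$. On the other hand, Fubini together with differentiation under the integral (both justified by \eqref{Intro_mu_1} and the explicit kernel \eqref{Intro_FTF}) yield $\widehat{P\mc{G}_\mu}(\y) = \mc{Q}_{\mu,r}(\y)$ for $\y \neq 0$, since the constant term $-e^{-\lambda}$ in $\mc{G}_\mu$ contributes only a distribution supported at the origin. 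Subtracting gives $\widehat{\phi}(\y) = \mc{Q}_{\mu,r}(\y)$ for $|\y| > \delta$, and continuity of $\widehat{\phi}$ (since $\phi \in L^1$) extends this identity to the boundary $|\y| = \delta$. Plugging into the positivity inequality above completes part (i).

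Part (ii) is entirely analogous, using the extremal majorant $\mc{M}_\nu(2\pi\delta,\mu,\cdot)$ and setting $\phi(\x) := P(\x)(\mc{M}(\x) - \mc{G}_\mu(\x)) \geq 0$; the stronger hypothesis \eqref{Intro_mu_2} enters precisely to ensure $\mc{G}_\mu(0) < \infty$ so that the majorant problem is well-posed. The main technical step I expect to dominate the proof is the rigorous verification that $\widehat{P\mc{L}}$ (resp.\ $\widehat{P\mc{M}}$) is supported in the closed ball $\{|\y|\leq \delta\}$ as a tempered distribution: this uses Paley--Wiener--Schwartz combined with the fact that the extremal $\mc{L}$ is tempered --- guaranteed by the integrability of the weighted difference $P(\mc{G}_\mu - \mc{L})$ even though $\mc{L}$ itself need not be globally in $L^1$. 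The boundary case $|\y_j - \y_l| = \delta$ is then handled automatically by continuity of $\widehat{\phi}$.
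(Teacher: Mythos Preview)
Your strategy is sound and reaches the result, but it differs from the paper's route and two of your justifications need tightening.

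The paper does \emph{not} work with $\mc{G}_\mu$ and its extremal minorant $\mc{L}_\nu(2\pi\delta,\mu,\cdot)$. Instead it fixes $\lambda$, uses the extremal minorant $\mc{L}_\nu(2\pi\delta,\lambda,\cdot)$ of $\mc{F}_\lambda(\x)=e^{-\lambda|\x|}$, and runs your Bochner argument at that level: since $|\x|^{2r}\mc{L}_\nu(2\pi\delta,\lambda,\cdot)\in L^1(\R^N)$, the function $\mc{L}_\nu(2\pi\delta,\lambda,\cdot)$ itself lies in $L^1(\R^N)$, hence is bounded by Plancherel--P\'olya, hence lies in $L^2(\R^N)$, and the \emph{classical} Paley--Wiener theorem gives a continuous $\widehat{\mc{L}_\nu}$ supported in $\overline{B}(\delta)$. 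Differentiating $2r$ times and expanding $\int\big(\mc{F}_\lambda-\mc{L}_\nu\big)|\x|^{2r}\big|\sum a_je^{-2\pi i\x\cdot\y_j}\big|^2\,\d\x\ge0$ yields the inequality for each $\lambda$; integrating in $\d\mu(\lambda)$ finishes. This route stays entirely within $L^1/L^2$ Fourier analysis and never needs distributional Paley--Wiener or temperedness of $\mc{G}_\mu$ or $\mc{L}_\nu(2\pi\delta,\mu,\cdot)$.

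Your direct route via $\mc{G}_\mu$ works, but two points need repair. First, $\mc{G}_\mu$ is \emph{not} bounded under \eqref{Intro_mu_1} alone; it may blow up at the origin. What you actually need (and what is true) is that $P\mc{G}_\mu=|\x|^{2r}\mc{G}_\mu$ is tempered: it is locally integrable near $0$ and has polynomial growth at infinity. Second, your invocation of Paley--Wiener--Schwartz is not the standard one: knowing that $P\mc{L}$ is entire of exponential type $\le 2\pi\delta$ and tempered on $\R^N$ is not the hypothesis of that theorem, which requires the complex bound $|P(\z)\mc{L}(\z)|\le C(1+|\z|)^Me^{2\pi\delta|\!\im\z|}$. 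This bound \emph{does} hold here, because $\mc{L}_\nu(2\pi\delta,\mu,\cdot)$ was constructed in Section~\ref{Sec_Radial_Functions} precisely as the inverse Fourier transform of a tempered distribution supported in $\overline{B}(\delta)$; citing that construction closes the gap cleanly. With these fixes your argument is complete, and arguably more conceptual, at the cost of relying on the distributional machinery already built in Section~\ref{Sec_Radial_Functions} rather than being self-contained.
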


\smallskip

In particular we observe that when $r=0$ and $\d \mu_{(\alpha)}(\lambda) = \lambda^{\alpha}\,\dl$, with $-2 < \alpha < 2\nu + 1 = N-1$, we have 
$$\mc{Q}_{\mu_{(\alpha)},0}(\y) =  C_{\alpha}\, |\y|^{-N  + \alpha+1},$$
where $C_\alpha = \pi^{\alpha + 1 - \frac{N}{2}} \,\Gamma(\alpha +1)\, \Gamma\left(\frac{N - \alpha -1}{2}\right)\,\Gamma\left(\frac{\alpha +1}{2}\right)^{-1}$ (see \cite[Chapter V, Lemma 1]{St}). In these cases, the inequalities of Theorem \ref{Thm4} are related to the discrete Hardy-Littlewood-Sobolev inequalities (see \cite[pp. 288]{HPL}). Their one-dimensional versions have appeared in \cite{CLV,CV2}.

\subsubsection{Periodic analogues} In the last section of the paper we provide further applications of the interpolation theory developed in Section \ref{polya} to solve some extremal problems in a periodic setting. We address the following problems:

\smallskip

\noindent (i) Given a periodic function $f:\R/\Z \to \R$ and an even probability measure $\vartheta$ on $\R/\Z$, find a trigonometric polynomial of degree at most $n$ that majorizes/minorizes $f$ in a way to minimize the $L^1(\R/\Z, \d\vartheta)$-error; 

\smallskip

\noindent (ii) Given a function $F: \SS^{N-1} \to \R$ and a probability measure $\sigma$ on the sphere $\SS^{N-1}$, find a polynomial (in $N$ variables) of degree at most $n$ that majorizes/minorizes $F$ in a way to minimize the $L^1(\SS^{N-1}, \d\sigma)$-error.

\smallskip

In \cite{LV}, Li and Vaaler solved (i) for the sawtooth function 
$$\psi(x) = 
\left\{
\begin{array}{lc}
x - \lfloor x \rfloor - \hh&, \ {\rm if} \ x \notin \Z;\\
0&, \ {\rm if} \ x \in \Z;
\end{array}
\right.
$$ 
and for characteristic functions of intervals and (ii) for characteristic functions of spherical caps, all with respect to Jacobi measures. Here we extend this construction to a wide class of even functions in (i) (that includes for instance the function $\varphi(x) = -\log|2 \sin \pi x|$, the harmonic conjugate of the sawtooth function) and a wide class of functions with an axis of symmetry in (ii), all with respect to more general measures. To accomplish this, besides the general interpolation machinery of Section \ref{polya}, we shall use the theory of reproducing kernel Hilbert spaces of polynomials and the theory of orthogonal polynomials on the unit circle.

\subsubsection{Polynomial approximation} The general interpolation formulas allow the solution of the analogous problem for one-sided approximation of $x\mapsto e^{-\lambda|x|}$ (and a class of even functions by integrating the parameter $\lambda$) by polynomials on a finite symmetric interval. We do not include this here since this problem was solved already by R.\ Bojanic and R.\ DeVore \cite{BV} with different methods.


\section{Interpolation at zeros of Laguerre-P\'{o}lya functions}\label{polya}

\subsection{Laguerre-P\'{o}lya class and Laplace transforms}
An entire function $F:\C \to \C$ is said to be of {\it P\'{o}lya class} if:
\begin{enumerate}
\item[(i)] $F$ has no zeros in open upper half plane $\U$;
\smallskip
\item[(ii)] $|F^*(z)|\le |F(z)|$ for all $z \in \U$;
\smallskip
\item[(iii)] $y\mapsto |F(x+iy)|$ is a nondecreasing function of $y>0$ for each fixed $x\in \R$.
\smallskip
\end{enumerate}
Such entire functions can be characterized by their Hadamard factorization as in \cite[Theorem 7]{B}. Here we will be mainly interested in the subclass of real entire functions of P\'{o}lya class, called the {\it Laguerre-P\'{o}lya class}. The functions in the Laguerre-P\'{o}lya class have only real zeros and their Hadamard factorization takes the form
\begin{equation}\label{hadamardproduct}
F(z)=\frac{F^{(r)}(0)}{r!}\,z^r\, e^{-az^2+bz}\,\prod_{j=1}^\infty\Big(1-\frac{z}{x_j}\Big)e^{z/x_j},
\end{equation}
where $r\in\Z^{+}$, $a, b, x_j \in \R$, with $a \geq 0$, $x_j \neq 0$ and $\sum_{j=1}^\infty x_j^{-2}<\infty$. An obvious adjustment of notation includes the case when there is only a finite number of roots. These functions were first considered by E. Laguerre \cite{La} and later by G. P\'{o}lya \cite{P1}.

\smallskip

We say that a Laguerre-P\'{o}lya function $F$ has degree $\mc{N}$, with $0 \leq \mc{N} < \infty$, if $a=0$ in \eqref{hadamardproduct} and $F$ has exactly $\mc{N}$ zeros counted with multiplicity. Otherwise we set the degree of $F$ to be $\mc{N} = \infty$. Throughout this paper we shall denote the degree of $F$ by $\mc{N}(F)$ or simply by $\mc{N}$ when there is no ambiguity.

\smallskip

Let $F$ be a Laguerre-P\'{o}lya function of degree $\mc{N}\geq 2$ (here and later on such a condition will include the case $\mc{N}= \infty$). For $c \in \R$ with $F(c)\neq 0$ we define
\begin{equation}\label{gc-def}
g_c(t) = \frac{1}{2\pi i }\int_{c-i\infty}^{c+i\infty} \frac{e^{st}}{F(s)} \,\ds,
\end{equation}
where integration is understood to be a complex line integral along the vertical line $c+iy$ with $y\in\R$. Observe that the condition $\mc{N}(F) \geq 2$ implies that $1/|F(c +iy)| = O(|y|^{-2})$ as $|y| \to \infty$, and the integral above is in fact absolutely convergent. Moreover, if $c \in (\tau_1, \tau_2)$, where $(\tau_1,\tau_2) \subset \R$ is the largest open interval containing no zeros of $F$ (we allow $\tau_1,\tau_2\in \{\pm\infty\}$), using this basic growth estimate for $F$ and the residue theorem we see that $g_c = g_d$ for all $d \in  (\tau_1,\tau_2)$. A Fourier inversion shows that
\begin{equation}\label{gc-trafo}
\frac{1}{F(z)} = \int_{-\infty}^\infty g_c(t)\, e^{-zt}\, \,\dt
\end{equation}
in the strip $\tau_1<\re(z)<\tau_2$ (see Lemma \ref{growth in LP} below or \cite[Chapter V, Theorem 2.1]{HW} to justify the absolute convergence). It is precisely this expression for the reciprocal of $F$ as a Laplace transform that will play an important role in this work. 

\smallskip

If $\mc{N}(F) =1$ we also have functions $g_c$ that satisfy \eqref{gc-trafo} in the appropriate half planes, which can be verified directly. Let $\tau$ be the zero of $F$, written in the form \eqref{hadamardproduct}. If $\tau = 0$ then \eqref{gc-trafo} holds with
\begin{equation}\label{Def_g_c_N_1_0}
g_c(t) = \left\{
\begin{array}{ll}
\vspace{0.2cm}
F'(0)^{-1}\, \chi_{(b,\infty)}(t), & \ {\rm for}\ c>0;\\
-F'(0)^{-1}\, \chi_{(-\infty,b)}(t),& \ {\rm for}\ c<0.\\
\end{array}
\right.
\end{equation}
If $\tau \neq 0$ then \eqref{gc-trafo} holds with
\begin{equation}\label{Def_g_c_N_1_1}
g_c(t) = \left\{
\begin{array}{ll}
\vspace{0.2cm}
-\tau \, F(0)^{-1}\, e^{\tau(t - b) -1}\chi_{(b + \tau^{-1},\infty)}(t), & \ {\rm for}\ c>\tau;\\
\tau \, F(0)^{-1}\, e^{\tau(t - b) -1}\chi_{(-\infty,b + \tau^{-1})}(t),& \ {\rm for}\ c<\tau.\\
\end{array}
\right.
\end{equation}
If $\mc{N}(F) = 0$ then \eqref{gc-trafo} holds with 
\begin{equation}\label{Def_g_c_N_0}
g_c(t) = F(0)^{-1}\,\delta(t-b),
\end{equation}
for any $c \in \R$, where $\delta$ is the Dirac delta distribution.

\smallskip

In the next two lemmas we state the relevant properties of the frequency functions $g_c$ needed for this work. The first one describes the sign changes of $g_c$ and its derivatives, while the second one describes the asymptotic behavior of $g_c$ and its derivatives. These and many other interesting properties of the functions $g_c$ are well detailed in \cite[Chapters II to V]{HW}.

\begin{lemma}\label{g-sign-changes} 
Let $F$ be a Laguerre-P\'olya function of degree $\mc{N}\ge 2$ and let $g_c$ be defined by \eqref{gc-def}, where $c \in \R$ and $F(c) \neq 0$. The following propositions hold:
\begin{enumerate} 
\item [(i)] The function $g_c \in C^{\mc{N}-2}(\R)$ and is real valued.
 
 \smallskip
 
\item[(ii)] The function $g_c$ is of one sign, and its sign equals the sign of $F(c)$.

\smallskip

\item[(iii)] If $c=0$, the function $g_0^{(n)}$ has exactly $n$ sign changes for $n = 0,1,\ldots, \mc{N}-2$. Moreover, the zeros associated with the sign changes of $g_0^{(n)}$ $(n=1,2,\ldots,\mc{N}-3)$ are all simple zeros.
\end{enumerate}
\end{lemma}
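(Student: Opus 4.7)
The plan attacks the three parts in sequence, with the Hadamard factorization \eqref{hadamardproduct} as the common technical engine.

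\textbf{Part (i).} Real-valuedness follows from the symmetry $F(\overline{s}) = \overline{F(s)}$ of real entire $F$: parametrizing $s = c+iy$ in \eqref{gc-def}, taking complex conjugates, and substituting $y \mapsto -y$ yields $\overline{g_c(t)} = g_c(t)$. For the $C^{\mc{N}-2}$-regularity, the key input is the growth estimate $|F(c+iy)| \gg (1+|y|)^{\mc{N}}$ as $|y|\to\infty$ arising from \eqref{hadamardproduct} (this is the growth lemma for Laguerre--P\'olya functions later invoked in the excerpt). Under this bound, the integrand $s^{n}e^{st}/F(s)$ is dominated on the vertical contour by a $y$-integrable function for $n\leq \mc{N}-2$, so differentiating \eqref{gc-def} under the integral sign $n$ times is legal and produces a continuous
\begin{equation*}
g_c^{(n)}(t) \;=\; \frac{1}{2\pi i}\int_{c-i\infty}^{c+i\infty}\frac{s^{n}e^{st}}{F(s)}\,\ds.
\end{equation*}

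\textbf{Part (ii).} The strategy is to factor $F = F_{-}F_{+}$ inside the strip $(\tau_1,\tau_2)$ containing $c$: $F_{-}$ collects the Hadamard factors with zeros in $(-\infty,\tau_1]$ and $F_{+}$ those in $[\tau_2,\infty)$, while the Gaussian--exponential prefactor $e^{-az^{2}+bz}$ is absorbed into either piece (the Gaussian has a positive Laplace inverse and $e^{bz}$ merely translates the result). In this strip $1/F = (1/F_{-})(1/F_{+})$ and $g_c = g_{-}\ast g_{+}$; a direct residue computation identifies the Laplace inverse of each simple factor $(1 - s/x_j)^{-1}$ as $\sgn(1 - c/x_j)\cdot|x_j|e^{x_j t}$ restricted to one half-line, one-signed with sign equal to $\sgn(1-c/x_j)$. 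Doing this first for a finite truncation $F_k$ (keeping only factors with $|x_j|\leq k$), the sign of the corresponding $g_{c,k}$ equals $\prod_j \sgn(1 - c/x_j) = \sgn(F_k(c))$, since the exponential and Gaussian prefactors are positive; one-signedness is preserved as $k\to\infty$ by $\sum x_j^{-2}<\infty$ together with uniform-on-compacta estimates. This limit passage is the technical nuisance of the part and is exactly the content of the Hirschman--Widder theory \cite{HW} of P\'olya frequency functions.

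\textbf{Part (iii).} The case $n=0$ is (ii). For the lower bound on the sign-change count I would induct: $g_0^{(n-1)}$ decays exponentially at $\pm\infty$ (shift the contour slightly inside $(\tau_1,\tau_2)$ to obtain $|g_0^{(n-1)}(t)|\leq Ce^{-\eta|t|}$), and if it has exactly $n-1$ sign changes then each of the $n$ open intervals of constant sign contains a local extremum of $g_0^{(n-1)}$, which is a sign change of $g_0^{(n)}$. The matching upper bound is the main obstacle of the whole lemma; I would again follow the Hirschman--Widder route, replacing $F$ by a finite-degree Laguerre--P\'olya polynomial $F_k$, for which the corresponding $g_{0,k}$ is by partial fractions an explicit exponential sum on each of $(-\infty,0)$ and $(0,\infty)$, so that the variation-diminishing property of the associated P\'olya frequency kernel (equivalently, the total positivity of the kernel $e^{st}$) caps the sign changes of $g_{0,k}^{(n)}$ by $n$; uniform-on-compacta convergence together with exponential-decay tail control then transfers the bound to $g_0^{(n)}$. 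Finally, the simplicity of the zeros for $1\leq n\leq\mc{N}-3$: if $g_0^{(n)}$ had a zero of odd order $m\geq 3$ at a sign-change point $t_i$, then $g_0^{(n+1)}$ would vanish to even order $m-1\geq 2$ at $t_i$ in addition to its $n+1$ sign changes, and a bookkeeping argument (Rolle between consecutive sign-change zeros, plus local extrema at infinity, plus the extra vanishing at $t_i$) would produce more than $n+2$ sign changes of $g_0^{(n+2)}$, contradicting the just-proved upper bound.
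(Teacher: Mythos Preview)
Your part (i) matches the paper's argument exactly.

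The main structural difference is in how (ii) and (iii) relate. You prove (ii) first via a convolution factorization $g_c = g_{-}\ast g_{+}$ (essentially reconstructing the Hirschman--Widder P\'olya frequency theory), and then invoke (ii) as the base case $n=0$ of (iii). The paper reverses this: it simply cites \cite[Chapter IV, Theorems 5.1 and 5.3]{HW} for all of (iii), and then derives (ii) from the $n=0$ case of (iii) by a one-line shift argument. Namely, if $c$ lies in the same zero-free interval $(\tau_1,\tau_2)$ as $0$, then $g_c=g_0$ has no sign changes by (iii), and \eqref{gc-trafo} forces its sign to equal that of $F(c)$; if not, replace $F$ by $F_c(z):=F(z+c)$ and observe via \eqref{SecLP_shift} that $g_c(t)=e^{ct}$ times the corresponding frequency function for $F_c$ at $0$, so the previous case applies. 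This reduction avoids the convolution machinery and the limit passage you flag as the ``technical nuisance.''

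For (iii) itself, both approaches ultimately lean on \cite{HW} for the upper bound on sign changes, which is indeed the substantive part; your Rolle-plus-decay argument for the lower bound and your bookkeeping sketch for simplicity are reasonable, but since the paper is content to cite \cite{HW} outright for the whole of (iii), your more detailed sketch is extra work rather than a genuine alternative. In short: your approach is correct but reproves more of the Hirschman--Widder theory than necessary, whereas the paper exploits the shift $F\mapsto F_c$ to make (ii) an immediate corollary of (iii).
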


\begin{proof} If $\mc{N}$ is finite we observe that $1/|F(c +iy)| = O\big(|y|^{-\mc{N}}\big)$ as $|y| \to \infty$. If $\mc{N} = \infty$, then $1/|F(c +iy)| = O(|y|^{-k})$ as $|y| \to \infty$, for all $k \in \N$ (for details see \cite[Chapter II, Theorem 6.3 and Chapter III, Theorem 5.3]{HW}). This allows us to differentiate $\mc{N}-2$ times inside the integral \eqref{gc-def}. Moreover, the fact that $F$ is real entire implies that $g_c$ is real valued. This establishes (i). 

\smallskip

The statement (iii) is \cite[Chapter IV, Theorem 5.1 and Theorem 5.3]{HW}. 

\smallskip

To show (ii) we let $c \in (\tau_1, \tau_2)$, where $(\tau_1,\tau_2) \subset \R$ is the largest open interval containing no zeros of $F$. If $0 \in (\tau_1,\tau_2)$ then $g_0 = g_c$ and (ii) follows from (iii) and \eqref{gc-trafo}. If $0 \notin (\tau_1,\tau_2)$ we consider the translation $F_c(z) := F(z+c)$, which is still a Laguerre-P\'{o}lya function. With this change we have
\begin{equation}\label{SecLP_shift}
g_c(t) = \frac{e^{ct}}{2\pi i } \int_{-i\infty}^{+i\infty} \frac{e^{st}}{F_c(s)} \,\ds\,,
\end{equation}
and again (ii) follows from (iii) since $e^{ct}$ is always positive.
\end{proof}

\begin{lemma}\label{growth in LP}
Let $F$ be a Laguerre-P\'olya function of degree $\mc{N}\ge 2$ and let $g_c$ be defined by \eqref{gc-def}, where $c \in \R$ and $F(c) \neq 0$. If $(\tau_1,\tau_2) \subset \R$ is the largest open interval containing no zeros of $F$ such that $c \in (\tau_1,\tau_2)$, and $0\leq n\le \mc{N}-2$, then there exist polynomials $p_n$ and $q_n$ such that
\begin{align}\label{gc-growth}
\big|g_c^{(n)}(t)\big| \le \begin{cases}
p_n(t) \,e^{\tau_1 t} &\text{ as }t\to +\infty,\\
q_n(t) \, e^{\tau_2 t} & \text{ as }t\to -\infty.
\end{cases} 
\end{align}
Note: When $\tau_2 = +\infty$ the meaning of \eqref{gc-growth} is that, for each $c < k < \infty$, there is actually a constant $C_{n,k}$ such that 
$$\big|g_c^{(n)}(t)\big| \leq C_{n,k}\, e^{k t}$$
as $t\to -\infty$. An analogous statement holds if $\tau_1 = -\infty$.
\end{lemma}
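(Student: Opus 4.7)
The plan is to exploit the freedom to move the vertical contour in \eqref{gc-def}. Since $F$ is zero-free in the open strip $\tau_1<\re(s)<\tau_2$, the integrand $s^n e^{st}/F(s)$ is holomorphic there. The estimate $|F(c+iy)|^{-1}=O(|y|^{-\mathcal{N}})$ recalled in the proof of Lemma~\ref{g-sign-changes} legitimizes $n$-fold differentiation under the integral sign for $n\leq \mathcal{N}-2$ and simultaneously kills the horizontal sides of a tall rectangle lying in the strip. Thus Cauchy's theorem gives, for every $c'\in(\tau_1,\tau_2)$,
\begin{equation*}
g_c^{(n)}(t) \,=\, \frac{1}{2\pi i}\int_{c'-i\infty}^{c'+i\infty}\frac{s^n\,e^{st}}{F(s)}\,\ds, \qquad \bigl|g_c^{(n)}(t)\bigr|\,\leq\, \frac{e^{c't}}{2\pi}\, I_n(c'),
\end{equation*}
where $I_n(c'):=\int_{-\infty}^{\infty}|c'+iy|^{n}\,|F(c'+iy)|^{-1}\,\dy$.

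To establish the first line of \eqref{gc-growth} I would slide $c'$ towards $\tau_1$; the second line follows from the symmetric choice $c'\to\tau_2^{-}$. When $\tau_1=-\infty$, every real $k$ is admissible, $I_n(k)<\infty$, and we obtain $|g_c^{(n)}(t)|\leq C_{n,k}\,e^{kt}$ for all $t$, which matches the interpretation in the note. When $\tau_1$ is finite, let $m_1\geq 1$ be the multiplicity of $\tau_1$ as a zero of $F$ and factor $F(s)=(s-\tau_1)^{m_1}\,F_1(s)$ with $F_1(\tau_1)\neq 0$. Setting $c'=\tau_1+\epsilon$ with $\epsilon>0$ small, we have $|c'+iy-\tau_1|=(\epsilon^2+y^2)^{1/2}$, and by continuity $|F_1(c'+iy)|$ stays bounded below by a positive constant on $|y|\leq 1$ for all small $\epsilon$. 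A direct computation then bounds the local piece $\int_{|y|\leq 1}$ of $I_n(c')$ by $C_n\,\epsilon^{1-m_1}$ (with a $\log(1/\epsilon)$ factor when $m_1=1$), while the tail $\int_{|y|\geq 1}$ is uniformly bounded in $\epsilon$ via the polynomial decay of $1/F$ on vertical lines. Optimizing by $\epsilon=1/t$ for $t$ large produces
\begin{equation*}
\bigl|g_c^{(n)}(t)\bigr|\,\leq\,\widetilde{C}_n\,t^{m_1-1}\,e^{\tau_1 t},
\end{equation*}
which is of the required form $p_n(t)\,e^{\tau_1 t}$, with any logarithmic factor absorbed into a slightly larger polynomial.

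The mirror argument uses $c'=\tau_2-\epsilon$ together with a factorization $F(s)=(\tau_2-s)^{m_2}F_2(s)$ to yield $q_n(t)\,e^{\tau_2 t}$ as $t\to-\infty$, and the case $\tau_2=+\infty$ is handled exactly as $\tau_1=-\infty$ was above. The main technical obstacle is the uniform control of $I_n(c')$ as $c'\to\tau_1^{+}$: one must cleanly isolate the singular local factor $(s-\tau_1)^{m_1}$ and verify that $|F_1(c'+iy)|$ remains bounded away from zero on a fixed complex neighborhood of $\tau_1$, uniformly in $\epsilon$. This relies on $\tau_1$ being an isolated zero of $F$, which is built into the Laguerre--P\'olya structure of $F$ via the Hadamard factorization \eqref{hadamardproduct}.
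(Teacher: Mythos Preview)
Your argument is correct, but it proceeds along a genuinely different line from the paper's. The paper's proof is essentially a citation: after reducing to $c=0$ by the shift \eqref{SecLP_shift}, it invokes explicit formulas from Hirschman--Widder. For finite $\mathcal{N}$ these come from the partial fraction decomposition of $1/F$: each term $A_{j,k}/(s-x_j)^k$ has inverse Laplace transform a constant times $t^{k-1}e^{x_j t}$ (supported on the appropriate half-line), and summing the pieces at the boundary zero $\tau_1$ immediately produces the polynomial-times-exponential form. For $\mathcal{N}=\infty$ it quotes \cite[Chapter V, Theorem 2.1]{HW} directly.

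Your route---holding the contour inside the strip, bounding $|g_c^{(n)}(t)|\le e^{c't}I_n(c')/(2\pi)$, isolating the local singular factor $(s-\tau_1)^{m_1}$, and optimizing $c'=\tau_1+1/t$---is a self-contained real/complex analysis argument that avoids any appeal to Hirschman--Widder and treats finite and infinite degree uniformly. The price is exactly the technical point you flag at the end: one must check that the tail $\int_{|y|\ge 1}$ of $I_n(c')$ stays bounded as $c'\downarrow\tau_1$. This follows, for instance, by writing $F(s)=(s-\tau_1)^{m_1}F_1(s)$ with $F_1$ again Laguerre--P\'olya of degree $\mathcal{N}-m_1$, noting that $F_1$ is zero-free on the closed strip $\tau_1\le\re(s)\le c$, using compactness on $|y|\le R$, and using the $O(|y|^{-(\mathcal{N}-m_1)})$ decay of $1/F_1$ on vertical lines for $|y|\ge R$ (uniform on compact abscissa sets since it comes from the Hadamard product). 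The paper's partial-fraction approach yields the sharp polynomial degree $m_1-1$ for free, whereas your optimization recovers the same exponent; either way the statement as written only asks for \emph{some} polynomial, so both arguments suffice.
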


\begin{proof} If we can show the result for $c=0$, the general case follows by considering the shift given by \eqref{SecLP_shift}. Let us then focus on the case $c=0$.

\smallskip

If $\mc{N}$ is finite, then from Lemma \ref{g-sign-changes} we know that $g_0^{(n)}$ exists for $n\le \mc{N}-2$. The growth estimate follows from the explicit representations given by \cite[Chapter II, Theorem 8.2]{HW}, which come from the Laplace expansions of the partial fraction decomposition of $F$. 
\smallskip

If $\mc{N}=\infty$, \eqref{gc-growth} follows from the statement of \cite[Chapter V, Theorem 2.1]{HW}.
\end{proof}

\subsection{Interpolation formulas}

In the next series of propositions we present suitable entire functions that interpolate $\mc{F}_{\lambda}(x) = e^{-\lambda|x|}$ at the zeros of a given Laguerre-P\'{o}lya function. We shall exploit the representation of the inverse of a Laguerre-P\'{o}lya function as a Laplace transform given by \eqref{gc-trafo}, where we have a good knowledge of the qualitative properties of the frequency functions $g_c$ provided by Lemmas \ref{g-sign-changes} and \ref{growth in LP}. The material in this subsection generalizes \cite[Section 3]{GV}.

\smallskip

Given a Laguerre-P\'olya function $F$, we henceforth denote by $\alpha_F$ the smallest positive zero of $F$ (if no such zero exists we set $\alpha_F = +\infty$). Similarly, we denote by  $\beta_F$ the largest nonpositive zero of $F$ (if no such zero exists we set $\beta_F = -\infty$). 

\begin{proposition} Let $F$ be a Laguerre-P\'olya function. Let $g=g_{\alpha_F/2}$ and assume that $F(\alpha_F/2)>0$ $($in case $\alpha_F= +\infty$, let $g = g_1$ and assume $F(1) >0$$)$. Define, for $\lambda >0$,
\begin{align}
\mc{A}_1(F,\lambda,z)  &= F(z) \int_{-\infty}^0 g(w-\lambda)\, e^{-zw}\, \dw\ \ \text{\rm for }\,\re(z)<\alpha_F,\label{al-def-1}\\
\mc{A}_2(F,\lambda,z) &=  e^{-\lambda z} - F(z) \int_{0}^\infty g(w-\lambda)\, e^{-zw} \,\dw\ \ \text{ \rm for }\, \re(z)>\beta_F.\label{al-def-2}
\end{align}
Then $z\mapsto \mc{A}_1(F,\lambda,z)$ is analytic in $\re(z)<\alpha_F$, $z\mapsto \mc{A}_2(F,\lambda,z)$ is analytic in $\re(z)>\beta_F$, and these functions are restrictions of an entire function, which we will denote by $\mc{A}(F,\lambda,z)$. Moreover, there exists $c>0$ so that 
\begin{align}\label{al-growth}
|\mc{A}(F,\lambda,z)| \le c\,\big(1+|F(z)|\big)
\end{align}
for all $z\in\C$.
\end{proposition}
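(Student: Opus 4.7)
\medskip

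\noindent\textbf{Proof plan.} My strategy has three stages: establish analyticity of $\mc{A}_1$ and $\mc{A}_2$ in the half-planes claimed, glue them via the Laplace representation of $1/F$, and then bound the resulting entire function uniformly. The only nontrivial input for the first two stages is the growth information for $g=g_{\alpha_F/2}$ supplied by Lemma~\ref{growth in LP}. Since $\alpha_F/2$ lies in the zero-free interval $(\beta_F,\alpha_F)$, we have, for suitable polynomials $p,q$,
\begin{equation*}
|g(t)|\le p(t)\,e^{\beta_F t}\ \text{as $t\to +\infty$},\qquad |g(t)|\le q(t)\,e^{\alpha_F t}\ \text{as $t\to -\infty$}.
\end{equation*}
After the shift $w\mapsto w-\lambda$, the integrand in \eqref{al-def-1} is, as $w\to-\infty$, of order $e^{(\alpha_F-\re(z))w}$, so the integral converges absolutely and locally uniformly on $\{\re(z)<\alpha_F\}$; thus $\mc{A}_1$ is analytic there. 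A symmetric argument for $\mc{A}_2$, using the bound at $+\infty$, gives analyticity on $\{\re(z)>\beta_F\}$. (When $\alpha_F=+\infty$ or $\beta_F=-\infty$ the corresponding formula already extends to all of $\C$ by the supplementary statement in Lemma~\ref{growth in LP}.)

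\medskip

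\noindent Since $\beta_F\le 0<\alpha_F$, the overlap strip $S=\{\beta_F<\re(z)<\alpha_F\}$ is nonempty. On $S$, a direct computation gives
\begin{equation*}
\mc{A}_1(F,\lambda,z)-\mc{A}_2(F,\lambda,z)=F(z)\int_{-\infty}^{\infty}g(w-\lambda)\,e^{-zw}\,\dw - e^{-\lambda z}.
\end{equation*}
The substitution $u=w-\lambda$ recasts the integral as $e^{-\lambda z}\int_{-\infty}^\infty g(u)\,e^{-zu}\,\du$, and \eqref{gc-trafo} shows that this equals $e^{-\lambda z}/F(z)$ on the strip. The right-hand side therefore vanishes, so $\mc{A}_1\equiv\mc{A}_2$ on $S$. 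Since the two open half-planes cover $\C$, the common analytic function extends to an entire function $\mc{A}(F,\lambda,z)$.

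\medskip

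\noindent For the growth bound \eqref{al-growth} I would split according to whether $\re(z)\gtrless \alpha_F/2$. When $\re(z)\ge \alpha_F/2$, I use the formula \eqref{al-def-2}: then $|e^{-\lambda z}|\le 1$, and on $[0,\infty)$ one has $e^{-\re(z)w}\le e^{-\alpha_F w/2}$, so the integral is dominated by $\int_0^\infty |g(w-\lambda)|\,e^{-\alpha_F w/2}\,\dw$, a finite constant by the $+\infty$ growth estimate. When $\re(z)\le\alpha_F/2$, I use \eqref{al-def-1}: on $(-\infty,0]$ one has $e^{-\re(z)w}\le\max(1,e^{-\alpha_F w/2})$, and the $-\infty$ growth estimate shows $\int_{-\infty}^0|g(w-\lambda)|\max(1,e^{-\alpha_F w/2})\,\dw$ is finite. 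Combining the two cases yields $|\mc{A}(F,\lambda,z)|\le c(1+|F(z)|)$ with $c=c(F,\lambda)$.

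\medskip

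\noindent The main technical point is this last step: the two Laplace integrals must be bounded by a single constant independent of $z$. The choice of the threshold $\alpha_F/2$ — matching the parameter used to define $g$ — is what makes both halves of the case analysis work, because it produces a fixed exponential weight that is small enough on $[0,\infty)$ to be dominated by the decay of $g$ at $+\infty$, yet large enough on $(-\infty,0]$ to absorb the growth of $e^{-\re(z)w}$ while still being integrable against $g$ near $-\infty$.
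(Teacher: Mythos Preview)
Your proof is correct and follows essentially the same route as the paper: convergence and analyticity from the growth lemma for $g$, gluing on the strip $\beta_F<\re(z)<\alpha_F$ via \eqref{gc-trafo}, and the growth bound by splitting at $\re(z)=\alpha_F/2$ and dominating each Laplace integral by its value at that threshold. One small omission: your invocation of Lemma~\ref{growth in LP} tacitly assumes $\mc{N}(F)\ge 2$; the paper separately notes that for $\mc{N}(F)\in\{0,1\}$ the explicit formulas \eqref{Def_g_c_N_1_0}--\eqref{Def_g_c_N_0} give the required integrability directly, and the case $\alpha_F=+\infty$ needs the split to be taken at a finite point (e.g.\ $\re(z)=1/2$, matching $g=g_1$) rather than at $\alpha_F/2$.
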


\begin{proof} If $\mc{N}(F) \geq 2$ we obtain from Lemma \ref{growth in LP} that the two integrals converge absolutely and define analytic functions in the stated half planes.  If $\mc{N}(F) = 1$ or $0$ the same holds from \eqref{Def_g_c_N_1_0}, \eqref{Def_g_c_N_1_1} and \eqref{Def_g_c_N_0}. Consider now $z \in \C$ with $\beta_F<\re(z)<\alpha_F$. From \eqref{gc-trafo} we have
\begin{align*}
e^{-\lambda z} = F(z) \int_{-\infty}^\infty g(w-\lambda)\, e^{-zw} \, \dw\,, 
\end{align*}
which implies that $\mc{A}_2(F, \lambda,z) = \mc{A}_1(F, \lambda,z)$ in the strip $\beta_F<\re(z)<\alpha_F$. Hence $z\mapsto \mc{A}_1(F, \lambda,z)$ and $z\mapsto \mc{A}_2(F, \lambda,z)$ are analytic continuations of each other and this defines the entire function $z \mapsto \mc{A}(F, \lambda, z)$. The integral representations for $\mc{A}$ imply, for $\re(z)\le \alpha_F/2$, that
\begin{align}\label{Sec_ILP_eq1}
\begin{split}
|\mc{A}(F, \lambda,z)|& \le |F(z)| \int_{-\infty}^0 g(w-\lambda) \,e^{-w\re(z) } \,\dw\\
&  \le |F(z)| \int_{-\infty}^0 g(w-\lambda) \,e^{-w \alpha_F/2} \,\dw\,,
\end{split}
\end{align}
while for $\re(z)\ge \alpha_F/2$ we have
\begin{equation}\label{Sec_ILP_eq2}
|\mc{A}(F, \lambda,z)|\le e^{-\lambda\alpha_F/2} +|F(z)| \int_0^\infty g(w-\lambda) \,e^{-w \alpha_F/2} \,\dw.
\end{equation}
Estimates \eqref{Sec_ILP_eq1} and \eqref{Sec_ILP_eq2} plainly verify \eqref{al-growth}. 
\end{proof}

\begin{proposition} Let $\lambda>0$. Let $F$ be an even Laguerre-P\'olya function such that $F(0)> 0$. Then the entire function $z\mapsto L(F,\lambda,z)$ defined by
\begin{align*}
 L(F,\lambda,z) &= \mc{A}(F,\lambda,z) + \mc{A}(F,\lambda,-z)
\end{align*}
satisfies
\begin{align}\label{minorant-ineq}
F(x)\left\{e^{-\lambda|x|}- L(F,\lambda,x)\right\}\ge 0
\end{align}
for all $x \in \R$ and
\begin{align}\label{minorant-interpolation}
L(F,\lambda,\xi) = e^{-\lambda|\xi|} 
\end{align}
for all $\xi \in \R$ with $F(\xi)=0$. 
\end{proposition}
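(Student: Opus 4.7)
My plan is to prove the two statements using the asymmetric roles of $\mc{A}_1$ and $\mc{A}_2$ at the zeros of $F$, combined with structural properties (evenness and unimodality) of the frequency function $g$. First, since $F$ is even with $F(0)>0$, the largest open interval containing no zeros of $F$ that contains $\alpha_F/2$ is $(-\alpha_F,\alpha_F)$, so the remark preceding \eqref{gc-trafo} gives $g = g_{\alpha_F/2} = g_0$; I will work with this canonical $g_0$. Substituting $s=iy$ in \eqref{gc-def} with $c=0$ expresses $g_0$ as the inverse Fourier transform $g_0(t) = \frac{1}{2\pi}\int_\R e^{iyt}/F(iy)\,dy$; since $F$ is even, $F(iy)$ is an even function of $y \in \R$, from which one deduces that $g_0$ is an even function of $t$.

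For the interpolation identity \eqref{minorant-interpolation}: given a zero $\xi$ of $F$, by evenness the pair $\pm\xi$ are both zeros, so without loss of generality $\xi \geq \alpha_F > 0$. Since $\xi > \beta_F$, formula \eqref{al-def-2} is valid at $\xi$ and gives $\mc{A}(F,\lambda,\xi) = e^{-\lambda\xi} - F(\xi)(\cdots) = e^{-\lambda\xi}$, while $-\xi < 0 < \alpha_F$ makes \eqref{al-def-1} valid at $-\xi$ and gives $\mc{A}(F,\lambda,-\xi) = F(-\xi)(\cdots) = 0$. Adding produces $L(F,\lambda,\xi) = e^{-\lambda|\xi|}$.

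For \eqref{minorant-ineq}: since $L(F,\lambda,\cdot)$, $e^{-\lambda|\cdot|}$, and $F$ are all even, it suffices to consider $x \geq 0$. For such $x$, both \eqref{al-def-2} for $\mc{A}(F,\lambda,x)$ and \eqref{al-def-1} for $\mc{A}(F,\lambda,-x)$ apply; using $F(-x)=F(x)$, the substitution $w\mapsto -w$ in the integral over $(-\infty,0)$, and the evenness of $g_0$, a short computation yields
\[
e^{-\lambda x} - L(F,\lambda,x) = F(x)\int_0^\infty \{g_0(w-\lambda) - g_0(w+\lambda)\}\,e^{-xw}\,\dw.
\]
Multiplying by $F(x)$ produces $F(x)^2$ times the integral, so \eqref{minorant-ineq} reduces to the pointwise estimate $g_0(w-\lambda) \geq g_0(w+\lambda)$ for all $w,\lambda>0$. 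By evenness, $g_0(w-\lambda) = g_0(|w-\lambda|)$, and $|w-\lambda| \leq w+\lambda$, so it is enough to show that $g_0$ is nonincreasing on $[0,\infty)$.

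The main obstacle is therefore this monotonicity of $g_0$. When $\mc{N}(F) \geq 3$, Lemma \ref{g-sign-changes}(iii) with $n=1$ tells us that $g_0'$ has exactly one sign change, which by evenness of $g_0$ must occur at $0$. Combined with $g_0 \geq 0$ (Lemma \ref{g-sign-changes}(ii) with $F(0)>0$), the decay of $g_0$ at infinity from Lemma \ref{growth in LP}, and the normalization $\int_\R g_0\,\dt = 1/F(0) > 0$ (obtained by setting $z=0$ in \eqref{gc-trafo}), this forces $g_0' \geq 0$ on $(-\infty,0)$ and $g_0' \leq 0$ on $(0,\infty)$, so $g_0$ is unimodal with maximum at $0$. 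The remaining small-degree cases (note that $\mc{N}(F)$ is even since $F$ is even with $F(0)\neq 0$) are dispatched directly: $\mc{N}(F)=0$ gives $F$ constant, whence $\mc{A}\equiv 0$ and $L\equiv 0$, reducing \eqref{minorant-ineq} to $F(x)\,e^{-\lambda|x|}\geq 0$; for $\mc{N}(F)=2$, an explicit residue calculation produces $g_0(t)=\tfrac{\alpha_F}{2F(0)}\,e^{-\alpha_F|t|}$, which is manifestly nonincreasing on $[0,\infty)$.
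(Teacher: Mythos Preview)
Your proof is correct and follows essentially the same approach as the paper's: identify $g=g_0$, use evenness of $F$ to deduce evenness of $g_0$, combine the two integral representations \eqref{al-def-1} and \eqref{al-def-2} to express $e^{-\lambda|x|}-L(F,\lambda,x)$ as $F(x)$ times an integral of $g_0(w-\lambda)-g_0(w+\lambda)$, and then reduce to the unimodality of $g_0$ via Lemma \ref{g-sign-changes} (with the degenerate cases $\mc{N}=0,2$ handled explicitly). The only cosmetic differences are that you work on $x\ge 0$ where the paper works on $x\le 0$, and you verify the interpolation identity \eqref{minorant-interpolation} directly from the vanishing of $F(\pm\xi)$ in the two representations, whereas the paper reads it off from the combined integral formula \eqref{ml-el-rep}; both routes are equally short.
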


\begin{proof} From the assumptions we have $g_{\alpha_F/2} = g_0$ and we call this function $g$. We first treat the case $\mc{N}(F) \geq 4$. Since $F$ is even, we obtain $F(iy)=F(-iy)$, and this implies that $g = g_0$ is even from \eqref{gc-def}. For $x<\alpha_F$, we use \eqref{al-def-1} for $\mc{A}(F,\lambda,z)$ and \eqref{al-def-2} for $\mc{A}(F,\lambda,-z)$ to obtain the integral representation
\begin{align}\label{ml-el-rep}
\begin{split}
L(F, \lambda,x) - e^{\lambda x} &= F(x)\!\int_{-\infty}^0 g(w-\lambda) \,e^{-xw} \,\dw -F(x) \!\int_0^\infty \!g(w-\lambda) \,e^{x w}\,\dw \\
&= F(x) \int_{-\infty}^0 \big\{g(w-\lambda) - g(w+\lambda)\big\} \,e^{-xw} \,\dw\,.
\end{split}
\end{align}
 
\noindent Since $\mc{N}(F) \geq 4$, $g'(t)$ exists for all real $t$. Since $F(0)>0$, the function $g=g_0$ decays exponentially as $t\to\pm\infty$ by Lemma \ref{growth in LP}. If $g$ had more than one local maximum, then $g'$ would have more than one sign change, which is not possible by Lemma \ref{g-sign-changes}. Since $g$ is even, it follows that $g$ is nondecreasing on $(-\infty,0)$ and nonincreasing on $(0,\infty)$. Hence, for $w <0$, 
\begin{align}\label{g-min-ineq}
g(w-\lambda) = g(-|w|-\lambda) \le g(-|w|+\lambda) = g(w+\lambda).
\end{align}
By inserting \eqref{g-min-ineq} into \eqref{ml-el-rep} we obtain \eqref{minorant-ineq} for $x\leq0$. By symmetry, \eqref{minorant-ineq} also follows for $x\geq0$. Identity \eqref{minorant-interpolation} follows from \eqref{ml-el-rep} for $\xi <0$ and, by symmetry, for $\xi >0$.

\smallskip

If $\mc{N}(F) = 2$ we must have
\begin{equation}\label{Sec2_eq1_NF2}
F(z) = C\left(1 - \frac{z^2}{\alpha^2}\right).
\end{equation} 
We proceed as above by observing that 
\begin{equation}\label{Sec2_eq1_g_t}
g(t) = \frac{\alpha}{2C}\,e^{-\alpha |t|}.
\end{equation}

\smallskip
If $\mc{N}(F) =0$ then $F$ is a constant and $L$ is identically zero.
\end{proof}

\begin{proposition} \label{Sec_ILP_Prop5}
Let $\lambda>0$. Let $F$ be an even Laguerre-P\'olya function that has a double zero at the origin. Let $g= g_{\alpha_F/2}$ and assume that $F(\alpha_F/2)>0$ $($in case $\alpha_F = +\infty$, let $g = g_1$ and assume $F(1) >0$$)$. Then the entire function $z\mapsto M(F,\lambda,z)$ defined by
\begin{align*}
M(F,\lambda,z) = \mc{A}(F,\lambda,z) + \mc{A}(F,\lambda,-z) +2g'(0) \frac{F(z)}{z^2}
\end{align*}
satisfies
\begin{align}\label{majorant-ineq}
F(x)\left\{M(F,\lambda,x) - e^{-\lambda|x|}\right\}\ge 0
\end{align}
for all $x \in \R$ and
\begin{align}\label{majorant-interpolation}
M(F,\lambda,\xi) = e^{-\lambda|\xi|}
\end{align}
for all $\xi \in \R$ with $F(\xi)=0$. 

\smallskip

\noindent Note: When $\mc{N}(F)= 2$, the function $g$ is not differentiable at the origin and we set $g'(0) := \tfrac12\big\{g'(0^-) + g'(0^+)\big\}$.
\end{proposition}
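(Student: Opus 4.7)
The proof follows the template of the preceding minorant proposition, with two new ingredients handling the double zero of $F$ at the origin: the auxiliary function $\tilde{g}$ replacing the $g_0$ of the previous proof, and the explicit extra term $2g'(0) F(z)/z^2$ in $M$.

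First, I plan to establish a key structural identity for $g = g_{\alpha_F/2}$. Writing $F(z) = z^2 \tilde{F}(z)$ with $\tilde{F}$ even Laguerre--P\'olya and $\tilde{F}(0) = F''(0)/2 =: c_2 > 0$, and shifting the defining contour of $g_c$ from $c = \alpha_F/2$ across the double pole of $1/F$ at the origin, the residue theorem produces $g_{-\alpha_F/2}(t) = g_{\alpha_F/2}(t) - t/c_2$. Combining this with the symmetry $g_c(-t) = g_{-c}(t)$ (a consequence of $F$ even) identifies the odd part of $g$ as the linear function $t/(2c_2)$ and yields $g'(0) = 1/F''(0)$, so in particular $g'(0) F''(0) = 1$. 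Consequently the even part $\tilde{g}(t) := (g(t)+g(-t))/2 = g(t) - g'(0) t$ is well defined.

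Next, for $x > 0$, I will employ $\mc{A}_2$ at $x$ (valid since $x > 0 = \beta_F$) and $\mc{A}_1$ at $-x$, along with $F$ even, to obtain
\[
\mc{A}(F,\lambda,x) + \mc{A}(F,\lambda,-x) - e^{-\lambda x} = F(x) \int_0^\infty \bigl[g(-w-\lambda) - g(w-\lambda)\bigr]\, e^{-xw}\, \dw.
\]
Substituting $g(-t) = \tilde{g}(t) - g'(0) t$ splits the bracket into a symmetric difference in $\tilde{g}$ and a linear piece $-2g'(0) w$; integrating the linear piece against $e^{-xw}$ produces $-2g'(0)/x^2$, which cancels exactly with the extra summand $2g'(0) F(z)/z^2$, leaving the clean identity
\[
M(F,\lambda,x) - e^{-\lambda x} = F(x) \int_0^\infty \bigl[\tilde{g}(w+\lambda) - \tilde{g}(w-\lambda)\bigr]\, e^{-xw}\, \dw.
\]

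The main obstacle is showing the integrand is nonnegative. Since $\tilde{g}$ is even and $w + \lambda \geq |w - \lambda|$ for $w \geq 0$, it suffices to prove that $\tilde{g}$ is nondecreasing on $[0,\infty)$. My plan is to exploit the factorization $1/F(s) = (1/s^2)(1/\tilde{F}(s))$: the inverse Laplace transform of $1/s^2$ on any contour $c > 0$ is $\tau\,\chi_{[0,\infty)}(\tau)$, and that of $1/\tilde{F}(s)$ is $h := g_0^{(\tilde{F})}$, so by the convolution theorem
\[
g(t) = \int_{-\infty}^t (t-u)\, h(u)\, \du.
\]
Differentiating directly, $g'(t) = H(t)$ for $H(t) := \int_{-\infty}^t h(u)\,\du$, and hence $\tilde{g}'(t) = \tfrac12\bigl(H(t) - H(-t)\bigr) = \int_0^t h(u)\,\du$ for $t > 0$ by evenness of $h$; this is nonnegative because $h \geq 0$ by Lemma \ref{g-sign-changes}(ii) applied to $\tilde{F}$. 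Multiplying the integral representation by $F(x)$ and using $F(x)^2 \geq 0$ establishes \eqref{majorant-ineq} for $x > 0$, and the evenness of $F$ and $M$ extends it to all real $x$. Finally, \eqref{majorant-interpolation} is read off directly from the $\mc{A}_1/\mc{A}_2$ representations: at a nonzero zero $\xi$ of $F$ one $\mc{A}$ integral vanishes while the other equals $e^{-\lambda|\xi|}$ and $F(\xi)/\xi^2 = 0$; at $\xi = 0$ both $\mc{A}$ contributions vanish while $F(z)/z^2 \to c_2$ makes $2g'(0) F(z)/z^2 \to 2g'(0)c_2 = 1 = e^0$ via $g'(0) F''(0) = 1$. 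The edge case $\mc{N}(F) = 2$, where $F(z) = Cz^2$ and $g$ is only piecewise $C^1$, is transparent under the stated convention, for which $\tilde{g}(t) = |t|/(2C)$ exhibits the required monotonicity explicitly.
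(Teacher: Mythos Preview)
Your proof is correct and, at its core, rests on the same analytic fact as the paper's: the frequency function $h=g_0^{(\tilde F)}$ associated to $\tilde F(z)=F(z)/z^2$ is nonnegative. In the paper this function appears as $g''$ (obtained by integrating \eqref{gc-trafo} by parts twice), and the integrand you call $\tilde g(w+\lambda)-\tilde g(w-\lambda)$ is literally the paper's $h(\lambda,w)=g(-w-\lambda)-g(w-\lambda)+2g'(0)w$, just rewritten after isolating the odd part of $g$. So the inequality \eqref{majorant-ineq} is established by essentially the same mechanism in both arguments.

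Where your route genuinely diverges is in the bookkeeping and, most notably, in the interpolation at $\xi=0$. By computing the residue of $e^{st}/F(s)$ at the double pole and combining with the symmetry $g_c(-t)=g_{-c}(t)$, you extract the explicit identity $g'(0)\,F''(0)=1$, after which $M(F,\lambda,0)=1$ is immediate from $\mc A(F,\lambda,0)=0$ and $F(z)/z^2\to F''(0)/2$. The paper instead performs two integrations by parts on \eqref{Ml-el-rep} and passes to the limit $x\to0^-$, a noticeably longer calculation. Your substitution $\tilde g(t)=g(t)-g'(0)t$ also makes the monotonicity step more transparent: ``$\tilde g$ even and nondecreasing on $[0,\infty)$ plus $w+\lambda\ge|w-\lambda|$'' replaces the paper's translation argument for the odd nonincreasing function $u\mapsto g'(0)-g'(u)$. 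Both organizations are valid; yours is somewhat cleaner, at the cost of invoking the convolution identity $g=r*h$ (which does require checking that the common strip $0<\re(s)<\alpha_F$ is available, as you implicitly use).
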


\begin{proof} Let us first deal with the case $\mc{N}(F)\geq 4$. For $x<0$, equations \eqref{al-def-1} and \eqref{al-def-2} imply
\begin{align}\label{Ml-el-rep}
\begin{split}\!
\!\!M(F,\lambda,x) -e^{-\lambda |x|} &= F(x) \int_{-\infty}^0 g(w-\lambda) \,e^{-xw}\, \dw \\
&\qquad \ \ \ - F(x)\int_0^\infty g(w-\lambda) \,e^{xw}\, \dw \, +\, 2g'(0)\frac{F(x)}{x^2}\\
&=F(x) \int_0^\infty \big\{g(-w-\lambda) -g(w-\lambda) + 2g'(0)w\big\}\,e^{xw}\, \dw.
\end{split}
\end{align}
The assumptions on $F$ imply that $z\mapsto z^{-2} F(z)$ is an even Laguerre-P\'olya function that is positive in an interval containing the origin. By multiplying both sides of \eqref{gc-trafo} by $z^2$ and integrating by parts twice using Lemma \ref{growth in LP}, it follows that $g''$ is the corresponding function whose Laplace transform represents $z^2 /F(z)$ in a strip containing the origin. Hence $g''$ is even, nonnegative and has exponential decay as $t\to\pm\infty$. Moreover, $g''$ is nondecreasing on $(-\infty,0)$ and nonincreasing on $(0,\infty)$ (if $\mc{N}\big(F(z)/z^2\big) \geq 4$ we can invoke Lemma \ref{g-sign-changes} to see this and if $\mc{N}\big(F(z)/z^2\big) =2$ we can verify directly by \eqref{Sec2_eq1_g_t}). Therefore $u\mapsto \{g'(0)  - g'(u)\}$ is odd and nonincreasing, and hence, for $w >0$, we have
\begin{align*}
2g'(0)w -\big\{g(w-\lambda) - g(-w-\lambda)\big\} &= \int_{-\lambda - w}^{-\lambda+w} \big\{g'(0) - g'(u)\big\} \,\du\\
&\ge \int_{-w}^w \big\{g'(0) - g'(u)\big\} \,\du \\
& =0.
\end{align*}
By inserting this into \eqref{Ml-el-rep} we obtain \eqref{majorant-ineq} for $x<0$. Therefore \eqref{majorant-ineq} also holds for $x>0$ by symmetry and for $x=0$ by continuity. Identity \eqref{majorant-interpolation} follows 
from \eqref{Ml-el-rep} for $\xi <0$ and, by symmetry, for $\xi >0$. 

\smallskip

To check it for $\xi=0$ define
\begin{align*}
h(\lambda,w) = g(-w-\lambda) - g(w-\lambda) + 2g'(0) w.
\end{align*}
We will denote differentiation of $h$ with respect to $w$ by $h'(\lambda,w)$. For $x<0$, we use integration by parts twice on the right-hand side of \eqref{Ml-el-rep}, noting that $h(\lambda,0)=0$ and $h'(\lambda,0) = 2g'(0) - 2g'(-\lambda)$, to obtain
\begin{align}\label{Ml-interpol-at0}
\begin{split}
M(F, \lambda,x) -e^{-\lambda |x|} &= F(x)\int_0^\infty h(\lambda,w) \,e^{x w}\, \dw\\
&=F(x)\left\{\frac{2g'(0) - 2g'(-\lambda)}{x^2}\right. \\
&\ \ \ \ \qquad \ \left. +\frac{1}{x^2} \int_0^\infty \!\! \big\{g''(-w-\lambda) -g''(w-\lambda)\big\}\, e^{x w}\,\dw\right\}.
\end{split}
\end{align}
We have already remarked that $g''(w)$ decays exponentially as $w\to \pm \infty$. From Lemma \ref{growth in LP} we also know that $g'(w)$ decays exponentially as $w\to -\infty$. We then obtain
\begin{align*}
\int_0^\infty  & \big\{   g''(-w -\lambda)   - g''(w-\lambda)\big\}\,\dw  \\
& = \int_{-\infty}^0 g''(w-\lambda)\, \dw - \int_0^\infty g''(w-\lambda)\, \dw \\
&= 2 \int_{-\infty}^0 g''(w-\lambda)\, \dw - \int_{-\infty}^\infty g''(w-\lambda)\, \dw \\
&= 2g'(-\lambda) -\int_{-\infty}^\infty g''(w-\lambda)\, \dw.
\end{align*}
Since $g''$ is nonnegative and even, the final integral equals $2g'(0)$. Hence, letting $x\to 0^-$ in \eqref{Ml-interpol-at0}, we obtain $M(F, \lambda,0) - 1 =0$, which finishes the proof of \eqref{majorant-interpolation}.

\smallskip

In the case $\mc{N}(F) =2$ we must have $F(z) = Cz^{2}$,
where $C>0$. Then $g(t) = C^{-1}\,t\,\chi_{(0, \infty)}(t)$ and 
$$g'(0) := \frac12\big\{g'(0^-) + g'(0^+)\big\} = \frac{1}{2C}.$$
The computations now can be done directly and we obtain
\begin{align}\label{Sec2_lastcase_M}
M(F,\lambda,x) &= 1
\end{align}
for all $x \in \R$. The result easily follows.
\end{proof}

The purpose of the next proposition is two-fold. Firstly, we want to establish that the functions $x \mapsto L(F,\lambda,x) - e^{-\lambda|x|}$ and $x \mapsto M(F,\lambda,x) - e^{-\lambda |x|}$ belong to $L^1(\R, |F(x)|^{-1}\,\dx)$ (this will be used in Lemma \ref{L1-estimate} below). Secondly, we want to display the dependence of such integrals on the parameter $\lambda$, specially as $\lambda \to 0$ (this will be used to show \eqref{Intro_open_asymp}).

\begin{proposition}\label{Prop10}
Let $\lambda >0$. Let $F$ be an even Laguerre-P\'olya function of degree $\mc{N}(F) \geq 4$.
\smallskip  
\begin{enumerate}
\item[(i)] If $F(0)\neq 0$, then there exists a constant $c = c(F) >0$ such that
\begin{align}\label{mf-integral-small-l}
\Big|L(F,\lambda,x) - e^{-\lambda|x|}\Big| \le c\, \lambda (1 + \lambda) \, \frac{|F(x)|}{1+x^2}
\end{align}
for all $x\in \R$. 

\smallskip

\item[(ii)] If $F$ has a double zero at the origin, then there exists a constant $c = c(F) >0$ such that
\begin{align}\label{Mf-integral-small-l}
\Big|M(F,\lambda,x) - e^{-\lambda |x|}\Big| \le c\, \lambda (1 + \lambda)\,\frac{|F(x)|}{x^2}
\end{align} 
for all $x \in \R$.
\end{enumerate}
\end{proposition}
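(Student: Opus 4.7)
\smallskip
\emph{Overall approach.} For both parts I start from the explicit integral representations already produced inside the proofs of Propositions 9 and \ref{Sec_ILP_Prop5}, namely \eqref{ml-el-rep} and \eqref{Ml-interpol-at0}. These write the defect as $F(x)$ (or $F(x)/x^2$) times a one-sided Laplace-type integral of a translated difference of $g$ or $g''$. The two key inputs will be: (a) $g'(0)=0$ (for the even $g$ of part (i), or the even $g''$ of part (ii)) together with the mean value theorem, yielding $|g'(\lambda)|\le \lambda\|g''\|_\infty$ and $|g'(0)-g'(-\lambda)|\le \lambda\|g''\|_\infty$; and (b) the $L^1$-translation estimate $\|\phi(\cdot+\lambda) - \phi(\cdot-\lambda)\|_{L^1}\le 2\lambda\, V(\phi)$ for $\phi\in BV(\R)$, applied with $\phi = g''$.

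\smallskip
\emph{Part (i).} Assume $F(0)>0$; the case $F(0)<0$ is identical after replacing $F$ by $-F$, which leaves $L$ invariant. For $x<0$, \eqref{ml-el-rep} reads
\[
L(F,\lambda,x) - e^{-\lambda|x|} = F(x)\,I(x), \qquad I(x) := \int_{-\infty}^0 h(w)\,e^{-xw}\,\dw,
\]
where $h(w) = g(w-\lambda)-g(w+\lambda)$; the case $x>0$ follows from evenness of $L$ and of $x\mapsto e^{-\lambda|x|}$. The plan is to bound $|I(x)|$ by $C\lambda$ on $|x|\le 1$ and by $C\lambda/x^2$ on $|x|\ge 1$, and then patch to $C\lambda/(1+x^2)$. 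The first estimate uses $e^{-xw}\le 1$ for $x,w\le 0$ and the explicit computation $\int_{-\infty}^0 |h(w)|\,\dw = 2\int_0^\lambda g(u)\,\du \le 2g(0)\lambda$, which relies on the sign/unimodality statement $h\le 0$ on $(-\infty,0]$ established inside the proof of Proposition 9. The second estimate comes from integrating by parts twice; since $h(0)=0$ by evenness of $g$ and $h,h'$ decay exponentially, one obtains
\[
I(x) = \frac{2g'(\lambda)}{x^2} + \frac{1}{x^2}\int_{-\infty}^0 h''(w)\,e^{-xw}\,\dw,
\]
and inputs (a) and (b) (together with $e^{-xw}\le 1$) yield $|I(x)| \le C\lambda/x^2$. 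Multiplying the patched bound $|I(x)|\le C\lambda/(1+x^2)$ by $|F(x)|$ gives the claim; the stated factor $\lambda(1+\lambda)$ is a harmless weakening.

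\smallskip
\emph{Part (ii).} Here the representation \eqref{Ml-interpol-at0} already carries a $F(x)/x^2$ prefactor, and the double zero of $F$ at the origin makes $F(x)/x^2$ entire, so no separate small-$|x|$ analysis is required. Using the evenness of $g''$ (proved in the course of Proposition \ref{Sec_ILP_Prop5}), for $x<0$ the bracket in \eqref{Ml-interpol-at0} is
\[
2\bigl[g'(0)-g'(-\lambda)\bigr] + \int_0^\infty \bigl[g''(w+\lambda)-g''(w-\lambda)\bigr]\,e^{xw}\,\dw.
\]
The first piece is $O(\lambda)$ by input (a); the integral is $O(\lambda)$ by input (b) together with $|e^{xw}|\le 1$ on $[0,\infty)$. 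Hence $|M(F,\lambda,x) - e^{-\lambda|x|}|\le C\lambda |F(x)|/x^2$, and $\lambda\le \lambda(1+\lambda)$ finishes. The case $x>0$ follows from the evenness of $M$.

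\smallskip
\emph{Main obstacle.} The crux of the whole argument is to justify $V(g'')<\infty$ under the hypothesis $\mc{N}(F)\ge 4$. For $\mc{N}(F)\ge 5$, Lemma \ref{growth in LP} supplies pointwise exponential decay for $g'''$, hence $g'''\in L^1(\R)$ and $V(g'') = \|g'''\|_{L^1}$. For $\mc{N}(F)=4$, where $g'''$ need not exist globally, the function $z^2/F(z)$ is a proper rational function of degree $-2$; partial-fraction decomposition then expresses $g''$ explicitly as a continuous, piecewise combination of decaying real exponentials, manifestly of bounded variation. In all admissible cases $V(g'')$ is a finite constant depending only on $F$, which is exactly what the above estimates require.
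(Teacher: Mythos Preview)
Your proof is correct and follows a genuinely different route from the paper's, yielding in fact a slightly sharper bound.

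Both arguments start from the same integral representations \eqref{ml-el-rep} and \eqref{Ml-interpol-at0}. For part (i) the paper performs a \emph{single} integration by parts and then applies the pointwise mean-value bound $|g'(w-\lambda)-g'(w+\lambda)|\le 2\lambda\|g''\|_\infty$, after which integrating $e^{-xw}$ gives the $1/x^2$ estimate; for the uniform-in-$x$ estimate it uses $|g(w-\lambda)-g(w+\lambda)|\le 2\lambda\sup_{[w-\lambda,w+\lambda]}|g'|$ together with the exponential decay of $g'$, and splitting the integration at $w=-\lambda$ is what produces the extra factor $(1+\lambda)$. In part (ii) the paper proceeds analogously, with a separate explicit computation when $\mc{N}(F)=4$. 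You instead integrate by parts \emph{twice} in part (i), exploit $h(0)=0$ and $g'(0)=0$ to simplify the boundary terms, and control the residual integral by the BV--translation estimate $\|g''(\cdot+\lambda)-g''(\cdot-\lambda)\|_{L^1}\le 2\lambda\,V(g'')$; for the bounded-in-$x$ estimate you use the clean identity $\int_{-\infty}^0|h|=2\int_0^\lambda g\le 2g(0)\lambda$. In part (ii) the same BV--translation bound replaces the paper's pointwise argument. What this buys you is a uniform $C\lambda$ (rather than $C\lambda(1+\lambda)$), at the cost of having to verify $V(g'')<\infty$; your handling of the borderline case $\mc{N}(F)=4$ via partial fractions of the rational function $z^2/F(z)$ is correct (and is essentially the same special case the paper must isolate). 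Note that for part (ii), the exponential decay of $g'''$ you invoke when $\mc{N}(F)\ge 5$ is best seen by applying Lemma \ref{growth in LP} to the function $z\mapsto F(z)/z^2$ rather than to $F$ itself, since then the relevant strip contains the origin.
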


\noindent{\it Remark}. Both \eqref{mf-integral-small-l} and \eqref{Mf-integral-small-l} fail when $\mc{N}(F) = 0$ or $2$. Computations can be done directly.

\begin{proof} {\it Part} (i). For $x<0$ and $g = g_0$ (which is an even function), we use \eqref{ml-el-rep} and integration by parts (using Lemma \ref{growth in LP} to cancel the boundary terms) to get
\begin{align}
\frac{L(F,\lambda,x)-e^{-\lambda |x|}}{F(x)} & = \int_{-\infty}^0 \big\{g(w-\lambda) -g(w+\lambda)\big\} \,e^{-xw}\, \dw \label{proof_lambda_small_eq1}\\
&= \frac{1}{x} \int_{-\infty}^0  \big\{g'(w-\lambda) -g'(w+\lambda)\big\} \,e^{-xw}\,\dw. \label{proof_lambda_small_eq2}
\end{align}
If we write $c_1 = \sup\big\{ |g''(u)|; u\in \RR \big\}$ (which is finite from Lemma \ref{growth in LP}), the mean value theorem gives us
\begin{equation*}
\big|g'(w-\lambda) -g'(w+\lambda)\big|\le 2\, \lambda \,c_1\,,
\end{equation*}
and from \eqref{proof_lambda_small_eq2} we find
\begin{equation}\label{proof_lambda_small_eq5}
\left|\frac{L(F,\lambda,x)-e^{-\lambda |x|}}{F(x)} \right| \leq \frac{2\, \lambda\, c_1}{x^2}.
\end{equation}
Observe also that
\begin{equation}\label{proof_lambda_small_eq3}
\big|g(w-\lambda) -g(w+\lambda)\big|\le 2\, \lambda \, \sup\big\{ |g'(u)| \,;\, u\in [w-\lambda,w+\lambda]\big\}.
\end{equation}
Now fix $0<\rho<\alpha_F$. From Lemma \ref{growth in LP} we can find a constant $c_2$ such that 
\begin{equation*}
|g'(t)| \leq c_2\, e^{-\rho |t|}
\end{equation*} 
for all $t \in \R$. In particular, if $w <0 $ and $u \in [w-\lambda,w+\lambda]$, we have 
\begin{equation}\label{proof_lambda_small_eq4}
|g'(u)| \leq 
\left\{
\begin{array}{cl}
\vspace{0.2cm}
c_2 & {\rm if} \ -\lambda<w<0;\\
c_2 \, e^{-\rho|w + \lambda|} & {\rm if}\ w \leq -\lambda.
\end{array}
\right.
\end{equation}
By inserting \eqref{proof_lambda_small_eq4} and \eqref{proof_lambda_small_eq3} into \eqref{proof_lambda_small_eq1} we obtain
\begin{align}\label{proof_lambda_small_eq6}
\begin{split}
\left|\frac{L(F,\lambda,x)-e^{-\lambda |x|}}{F(x)} \right| & \leq 2\lambda c_2 \left[\int_{-\infty}^{-\lambda} e^{-\rho |w + \lambda|} \,e^{-xw}\, \dw + \int_{-\lambda}^{0} e^{-xw}\, \dw\right]\\
&  \leq 2\lambda c_2 \left[\int_{-\infty}^{-\lambda} e^{-\rho |w + \lambda|} \, \dw + \lambda\right]\\[0.2em]
& =   2\lambda c_2 \left( \rho^{-1} + \lambda\right).
\end{split}
\end{align}
Now clearly \eqref{proof_lambda_small_eq5} and \eqref{proof_lambda_small_eq6} imply \eqref{mf-integral-small-l} for all $x<0$. The result follows for $x>0$ by symmetry since all the functions are even, and also for $x=0$ by continuity.

\smallskip

\noindent{\it Part} (ii). We now let $g = g_{\alpha_F/2}$. From \eqref{Ml-interpol-at0} we obtain, for $x<0$,
\begin{align}\label{Ml-rep-ext}
\begin{split}
\Big|M(F,\lambda,x) & -e^{-\lambda |x|}\Big|\\
&\le \frac{2\,|F(x)|}{x^2}\, \big|g'(0) - g'(-\lambda)\big| \\
& \ \ \  \ \ \ \ \ \ \ \ \ +\frac{|F(x)|}{x^2} \!\int_0^\infty \big|g''(-w-\lambda) -g''(w-\lambda)\big| \,e^{x w}\, \dw.
\end{split}
\end{align}
Recall that $g''$ satisfies \eqref{gc-trafo} with $F$ replaced by the function $z\mapsto z^{-2}F(z)$, which is even and nonzero at the origin (this is accomplished by integrating by parts twice). It follows that $g''$ is even and has exponential decay as $t\to\pm\infty$ (given by Lemma  \ref{growth in LP}). Writing again $c_1 = \sup\big\{ |g''(u)|; u\in \RR \big\}$ we control the first term on the right-hand side of \eqref{Ml-rep-ext},
\begin{equation*}
\frac{2\,|F(x)|}{x^2}\, \big|g'(0) - g'(-\lambda)\big| \leq \frac{2\,\lambda \,c_1\,|F(x)|}{x^2}.
\end{equation*}
To deal with the second term, if $\mc{N}(F) \geq 6$, we may use 
\begin{align*}
\big|g''(-w-\lambda) -g''(w-\lambda)\big| & = \big|g''(w+\lambda) -g''(w-\lambda)\big| \\
& \leq 2 \,\lambda \, \sup\big\{ |g'''(u)| \,;\, u\in [w-\lambda,w+\lambda]\big\}
\end{align*}
and follow the method employed in \eqref{proof_lambda_small_eq4} and \eqref{proof_lambda_small_eq6} in the proof of part (i).

\smallskip

If $\mc{N}(F) = 4$, then the function $F(z)z^{-2}$ is given by \eqref{Sec2_eq1_NF2} and $g''$ is given by \eqref{Sec2_eq1_g_t}. We then compute directly, for $x <0$, 
\begin{align*}
\int_0^{\infty}  \left\{e^{-\alpha|w-\lambda|} - e^{-\alpha|-w-\lambda|}\right\}e^{xw}\,\dw  = \frac{2\alpha \left(e^{\lambda x} - e^{-\lambda \alpha}\right)}{(\alpha^2 - x^2)} \leq \frac{2\alpha \lambda}{(\alpha - x)} \leq 2\lambda\,,
\end{align*}
by the mean value theorem. This completes the proof.

\end{proof}



\section{De Branges spaces}\label{debranges-section}

The main objective of this section is to prove Theorem \ref{Intro_Thm1}.

\subsection{Preliminaries} We start with a series of lemmas that will help us establish the connection between the hypotheses (P1)\,-\,(P4) of Theorem  \ref{Intro_Thm1} and the interpolation theory developed in Section \ref{polya}. We keep our notation close to \cite{B,HV} to facilitate the references.

\smallskip

First we review the connection between functions of exponential type and functions that have bounded type in the upper and lower half planes. Throughout this section we write $\log^+|x| =\max\{0, \log |x|\}$.

\begin{lemma}\label{theorem-krein} 
Let $F:\C \to \C$ be an entire function. The following conditions are equivalent:
\begin{enumerate}
\item[(i)] $F$ and $F^*$ have bounded type in the open upper half plane $\U$.

\smallskip

\item[(ii)] $F$ has exponential type and 
\begin{equation*}
\int_{-\infty}^\infty \frac{\log^+|F(x)|}{1+x^2} \, \dx<\infty.
\end{equation*}
\end{enumerate}
If either and therefore both of these conditions hold then $\tau(F) = \max\{v(F), v(F^*)\}$.
\end{lemma}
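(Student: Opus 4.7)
The statement is the classical theorem of M.~G.~Krein, and my plan is to sketch it in two directions plus the type formula.

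For the implication (i) $\Rightarrow$ (ii), I would invoke the Nevanlinna representation of functions of bounded type in $\U$. If $F$ is of bounded type in $\U$, it admits an inner-outer factorization (Blaschke product times a singular inner factor times an outer function) from which one extracts three facts: (a) the mean type $v(F)=\limsup_{y\to\infty}y^{-1}\log|F(iy)|$ exists and is finite, (b) the Poisson integrability $\int\log^+|F(x)|(1+x^2)^{-1}\,\dx<\infty$ holds, and (c) for every $\epsilon>0$ one has an estimate $|F(x+iy)|\le C_\epsilon\exp\bigl(v(F)\,y+\epsilon|z|\bigr)$ throughout $\U$. Applying the same reasoning to $F^*$ in $\U$ and using the identity $|F(z)|=|F^*(\overline z)|$ yields the analogous bound $|F(x+iy)|\le C_\epsilon\exp\bigl(v(F^*)|y|+\epsilon|z|\bigr)$ in the lower half plane. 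Combining both half planes and letting $\epsilon\to 0$ along a suitable sequence gives $\tau(F)\le\max\{v(F),v(F^*)\}$.

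For (ii) $\Rightarrow$ (i), the strategy is to build a Nevanlinna-type factorization by hand. The decisive step is to show that the zeros $\{z_k\}$ of $F$ in $\U$ obey the Blaschke condition $\sum_k \im(z_k)/(1+|z_k|^2)<\infty$, which I would obtain from Carleman's formula on the upper half-disc of radius $R$, balancing the exponential-type growth against the integrability of $\log^+|F(x)|$ and letting $R\to\infty$. Once this is in hand, the Blaschke product $B$ for the zeros of $F$ in $\U$ converges and is bounded there; the quotient $F/B$ is entire, non-vanishing in $\U$, of exponential type, and inherits the Poisson-integrable boundary behavior. A Phragm\'en--Lindel\"of argument then shows that $\log(F/B)$ admits a Poisson--Nevanlinna representation with a linear term, exhibiting $F=B\cdot(F/B)$ as a ratio of two bounded analytic functions in $\U$. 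The parallel argument applied to $F^*$ completes the verification of (i).

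The identity $\tau(F)=\max\{v(F),v(F^*)\}$ then follows by pairing the upper bound obtained in the first paragraph with the trivial lower bounds $\tau(F)\ge v(F)$ and $\tau(F)\ge v(F^*)=\limsup_{y\to\infty}y^{-1}\log|F(-iy)|$, which are immediate from the definitions of exponential type and mean type. The hard part, as is well known, is the Blaschke summation estimate obtained from Carleman's formula in the implication (ii) $\Rightarrow$ (i); everything else is bookkeeping on top of the classical Nevanlinna theory in a half plane.
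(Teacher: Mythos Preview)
The paper does not prove this lemma at all; its ``proof'' consists of the single sentence ``This is a theorem of M.~G.~Krein \cite{K}.'' Your proposal, by contrast, sketches the classical argument via Nevanlinna theory in a half plane and Carleman's formula, which is indeed the standard route to Krein's theorem.

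Your outline is essentially correct. A couple of steps would need to be fleshed out if you were writing a self-contained proof: in (i)$\Rightarrow$(ii), the estimate (c) does follow from the Nevanlinna representation, but one must check that the Poisson integral of $\log|F|$ contributes $o(|z|)$ along rays and that the singular inner part is accounted for; and the passage from the two half-plane bounds to a global exponential-type bound needs a word about the real axis (a Phragm\'en--Lindel\"of argument or the three-lines lemma on a strip suffices). In (ii)$\Rightarrow$(i), your use of Carleman's formula to verify the Blaschke condition is exactly the right idea, and the rest is, as you say, standard Nevanlinna bookkeeping. None of this is a genuine gap---it is just the expected technical content of Krein's theorem---and it is presumably for that reason that the paper is content to cite the reference rather than reproduce the argument.
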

\begin{proof} This is a theorem of M. G.\ Krein \cite{K}.
\end{proof} 
\noindent In what follows we let $\B$ denote the set of entire functions $F$ that satisfy one and therefore both of the conditions (i) or (ii) in Lemma \ref{theorem-krein}.

\smallskip

For a Hermite-Biehler function $E(z)$ we recall the decomposition $E(z) = A(z) - iB(z)$, with $A$ and $B$ real entire functions given by  
\begin{equation*}
A(z) := \frac12 \big\{E(z) + E^*(z)\big\} \ \ \ {\rm and}  \ \ \ B(z) := \frac{i}{2}\big\{E(z) - E^*(z)\big\}.
\end{equation*}

\begin{lemma}\label{HBI-RE} 
Let $E$ be a Hermite-Biehler function with no real zeros.  The following conditions are equivalent:
\begin{enumerate}
\item[(i)]  $z\mapsto E(iz)$ is real entire.

\smallskip

\item[(ii)] The functions $z\mapsto A^2(iz)$ and $z\mapsto B^2(iz)$ are even functions and $B$ has a simple zero at the origin.
\end{enumerate}
\end{lemma}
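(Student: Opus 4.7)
The plan is to reduce the equivalence to the elementary observation that, writing $E = A - iB$ with $A$ and $B$ real entire, $z \mapsto E(iz)$ is real entire if and only if $A$ is even and $B$ is odd. Expanding $A(z) = \sum a_n z^n$ and $B(z) = \sum b_n z^n$ with real coefficients, one gets
$$E(iz) = \sum_{n\ge 0} (a_n - i b_n)\, i^n\, z^n,$$
and a case analysis on $n$ modulo $4$ shows each coefficient is real precisely when $b_n = 0$ for even $n$ and $a_n = 0$ for odd $n$. This reduction is the crux of the lemma, though it is computational and routine.

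Given the reduction, I would prove (i) $\Rightarrow$ (ii) as follows. If $A$ is even and $B$ is odd, then $A(iz)$ is even and $B(iz)$ is odd as functions of $z$, so their squares are both even, and $B(0) = 0$. The only nontrivial point is upgrading this to a \emph{simple} zero. For this I would apply the reproducing kernel formula \eqref{Intro_Def_K}, which, since $B(0) = 0$, gives
$$\pi K(0,0) = B'(0) A(0) - A'(0) B(0) = B'(0) A(0).$$
Because $E$ has no real zeros and $E(0) = A(0)$, one has $A(0) \neq 0$; and because $K(w,w) = 0$ precisely at real zeros of $E$ (the fact from \cite[Lemma 11]{HV} recalled in the introduction), $K(0,0) \neq 0$. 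Hence $B'(0) \neq 0$.

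For (ii) $\Rightarrow$ (i), I would use the factorization
$$A(iz)^2 - A(-iz)^2 = \big(A(iz) - A(-iz)\big)\big(A(iz) + A(-iz)\big) \equiv 0,$$
so by the identity principle $A(iz) = A(-iz)$ or $A(iz) = -A(-iz)$, i.e., $A$ is even or odd. The simple-zero hypothesis gives $B(0) = 0$, so $E(0) = A(0) \neq 0$ by the no-real-zeros assumption, ruling out $A$ odd. Analogously $B$ is even or odd, and $B'(0) \neq 0$ (from the simple zero) rules out $B$ even. Thus $A$ is even and $B$ is odd, and the initial Taylor-series reduction yields (i).

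I do not anticipate a genuine obstacle. The only slightly delicate point is the role of the simple-zero condition: on the forward direction it is what the kernel computation is needed to secure, and on the reverse direction it is what pins down the correct sign choice in each of the two dichotomies arising from the identity principle.
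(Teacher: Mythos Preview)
Your proof is correct and follows essentially the same line as the paper's: both reduce the equivalence to ``$A$ is even and $B$ is odd,'' and both use $K(0,0)\neq 0$ (via \cite[Lemma 11]{HV} and \eqref{Intro_Def_K}) to upgrade $B(0)=0$ to a simple zero. The only noteworthy difference is in (ii)$\Rightarrow$(i): you rule out $B$ even directly from $B'(0)\neq 0$, whereas the paper instead observes that $B(iz)$ even would force $E(iz)$ even and derives a contradiction from the Hermite--Biehler inequality \eqref{HB-condition}.
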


\begin{proof} To prove that (i) implies (ii), note that (i) implies $ E^*(-iz) = E(iz)$, hence $z\mapsto A(iz)$ is even and $z\mapsto B(iz)$ is odd, and $B(0)=0$. If the zero were not simple, from \eqref{Intro_Def_K} we would have $K(0,0)=0$, and from \cite[Lemma 11]{HV} this would imply $E(0)=0$, a contradiction.

\smallskip

For the other direction, $B(0)=0$ implies $A(0)\neq 0$ since $E$ has no real zeros. This implies that $z\mapsto A(iz)$ is even. If $z\mapsto B(iz)$ were even, then $z\mapsto E(iz)$ would be even, which violates  \eqref{HB-condition}. Thus $z\mapsto B(iz)$ is odd and (i) follows.
\end{proof}

\begin{lemma}\label{Sec_dBS_E_is_in_B} 
Assume that $E$ is a Hermite-Biehler function of bounded type in $\U$. Then the functions $A$ and $B$ are in the Laguerre-P\'{o}lya class.
\end{lemma}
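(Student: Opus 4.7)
The plan is to establish three properties for $A$ and $B$: they are real entire, have finite exponential type, and all their zeros are real. Once these are in place, matching the Hadamard factorization against \eqref{hadamardproduct} with quadratic coefficient $a=0$ will place $A$ and $B$ immediately in the Laguerre-P\'olya class, so no separate verification of the monotonicity condition (iii) in the definition of the P\'olya class will be needed.

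First I would propagate the bounded type hypothesis from $E$ to $E^*$ and then to the linear combinations $A$ and $B$. Because $E$ has no zeros in $\U$ (by \eqref{HB-condition}), the ratio $E^*/E$ is analytic and bounded by $1$ on $\U$, so $E^* = E\cdot(E^*/E)$ is a product of two bounded-type functions and has bounded type in $\U$. Finite linear combinations of bounded-type functions are of bounded type in $\U$ (write each as a ratio $P/Q$ of bounded analytic functions and combine over a common denominator), hence $A=\tfrac12(E+E^*)$ and $B=\tfrac{i}{2}(E-E^*)$ have bounded type in $\U$. Since $A$ and $B$ are real entire, $A^* = A$ and $B^* = B$, so the bounded-type condition is automatic in the lower half-plane as well, and Lemma \ref{theorem-krein} applies to give $A,B\in\B$; in particular, both have finite exponential type.

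Next I would exclude non-real zeros. A zero $w\in\U$ of $A$ would force $E(w)=-E^*(w)$, contradicting $|E^*(w)|<|E(w)|$; a zero $w\in\U$ of $B$ would force $E(w)=E^*(w)$, yielding the same contradiction. Since $A$ and $B$ are real entire, their zero sets are symmetric about $\R$, so the absence of zeros in $\U$ also rules out zeros in the lower half-plane. Thus every zero of $A$ and $B$ lies on $\R$.

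The final step invokes Hadamard's theorem. A real entire function $F$ of finite exponential type has order at most $1$, so it admits a factorization
\[
F(z)=\frac{F^{(r)}(0)}{r!}\,z^r\, e^{\beta z}\prod_j\Bigl(1-\frac{z}{x_j}\Bigr)e^{z/x_j}
\]
with $\sum_j x_j^{-2}<\infty$ (by Borel's bound on the exponent of convergence). The coefficient $\beta$ is real because $F$ is real entire, and by assumption the $x_j$ are real, so this matches \eqref{hadamardproduct} with $a=0$. Applying this to $F=A$ and $F=B$ completes the proof. The only mildly delicate point is the realness of $\beta$, which follows from comparing Taylor coefficients of $F$ after dividing out the $z^r$ factor and the canonical product; all other ingredients are either the Hermite-Biehler inequality or general facts about functions of exponential type.
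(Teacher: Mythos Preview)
Your proof is correct and shares the paper's opening moves (bounded type for $E^*$, real zeros of $A$ and $B$ via \eqref{HB-condition}), but then diverges in how it reaches the Laguerre--P\'olya conclusion. The paper invokes a black-box result (\cite[Problem 34]{B}, alternatively \cite{KW}): a real entire function $A$ with $A=A^*$ that has bounded type in $\U$ is automatically of P\'olya class, and being real entire it is then Laguerre--P\'olya. You instead pass through Krein's theorem (Lemma \ref{theorem-krein}) to extract finite exponential type, then appeal directly to Hadamard's factorization for order~$\le 1$ to match \eqref{hadamardproduct} with $a=0$. Your route is more self-contained, relying only on results already stated in the paper plus the classical Hadamard theorem, whereas the paper's route is terser but defers the substance to an outside reference. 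The realness of $\beta$ is indeed straightforward: the canonical product and $z^r$ are real entire when the zeros are real, so the remaining exponential factor must be as well.
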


\begin{proof} Since $E^*/E$ is bounded in $\U$, it is of bounded type. Hence $E^*$ is also of bounded type in $\U$, which shows that $E \in \B$. From \eqref{HB-condition} and \eqref{Intro_def_A_B} we see that $A$ and $B$ have only real zeros. Since $A=A^*$ and $A$ has bounded type in the upper half plane, $A$ is in the P\'{o}lya class by \cite[Problem 34]{B} (alternatively, see \cite{KW}). The same argument shows that $B$ is in the P\'{o}lya class. Since $A$ and $B$ are real entire functions, they are in the Laguerre-P\'{o}lya class.
\end{proof}

\subsection{The $UU^*$ decomposition} The next three lemmas are key steps connecting the $L^1$ and $L^2$ theories. They will allow us to use the Hilbert space structure to prove the optimality part of Theorem \ref{Intro_Thm1}.

\smallskip

If $E$ is a Hermite-Biehler function of bounded type in $\U$, we have seen in the proof of Lemma \ref{Sec_dBS_E_is_in_B} that $E \in\mc{B}$. From \eqref{HB-condition} we know that $v(E^*) \leq v(E)$ and thus, from Krein's theorem (Lemma \ref{theorem-krein}), we obtain that $E$ has exponential type $\tau(E) = v(E)$.

\begin{lemma}\label{nonnegl1-tohe} 
Let $E$ be a Hermite-Biehler function of bounded type in $\U$ with exponential  type $\tau(E)$. Let $F:\C \to \C$ be a real entire function of exponential type at most $2\tau(E)$ that satisfies
\begin{align*}
F(x)\ge 0
\end{align*}
for all $x \in \R$ and
\begin{align}\label{integral-F-finite}
\int_{-\infty}^\infty F(x)\, |E(x)|^{-2} \,\dx <\infty.
\end{align} 
Then there exists $U\in\H(E)$ such that
\begin{align*}
F(z) = U(z) U^*(z)
\end{align*}
for all $z \in \C$.
\end{lemma}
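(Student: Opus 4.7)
The approach is a three-step reduction: first place $F$ in the Krein class $\B$; second, produce a balanced Fej\'er--Riesz/Akhiezer factorization $F = UU^*$; third, verify that the resulting $U$ satisfies the three defining properties of $\H(E)$.

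For Step 1, by Lemma~\ref{theorem-krein} it suffices to show $\int_\R \log^+|F(x)|\,(1+x^2)^{-1}\,\dx < \infty$. On the set $\{F \ge 1\}$ I would write $\log F(x) = \log\bigl(F(x)|E(x)|^{-2}\bigr) + 2\log|E(x)|$ and apply $\log t \le t$ on the first summand together with \eqref{integral-F-finite}; the second summand is handled using $E \in \B$ (Lemma~\ref{Sec_dBS_E_is_in_B}) and the hypothesis that $E$ has no real zeros, which yields integrability of $|\log|E(x)||\,(1+x^2)^{-1}$ via the standard Poisson representation for functions of bounded type.

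For Step 2, I would invoke the Akhiezer factorization theorem for entire functions: a real entire function $F$ of exponential type in $\B$ that is nonnegative on $\R$ admits a factorization $F = UU^*$ in which $U$ is entire, of exponential type, has no zeros in $\U$, and satisfies $v(U) = v(U^*) = \tfrac12 \tau(F)$. Explicitly, one pairs the non-real zeros of $F$ as conjugate pairs and assigns those with nonpositive imaginary part to $U$; the real zeros, which have even multiplicity since $F \ge 0$, are split evenly between $U$ and $U^*$; the exponential factor $e^{icz}$ (for suitable real $c$) is then used to symmetrize the mean types, exploiting $v(e^{icz}U) = v(U) - c$ and $v\bigl((e^{icz}U)^*\bigr) = v(U^*) + c$ together with $v(U) + v(U^*) = v(F) = \tau(F)$. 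Since $\tau(F) \le 2\tau(E)$ by hypothesis, this yields $v(U) = v(U^*) \le \tau(E) = v(E)$.

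For Step 3, the $L^2$ bound $\int |U|^2 |E|^{-2}\,\dx = \int F|E|^{-2}\,\dx$ is finite by \eqref{integral-F-finite}. Since $|U|^2 = F$ on $\R$, one also has $\int \log^+|U|\,(1+x^2)^{-1}\,\dx < \infty$, so $U \in \B$ by Lemma~\ref{theorem-krein}; hence $U$ and $U^*$ have bounded type in $\U$, and therefore so do the quotients $U/E$ and $U^*/E$. Finally $v(U/E) = v(U) - v(E) \le 0$ and symmetrically $v(U^*/E) \le 0$, giving $U \in \H(E)$. The main obstacle is the balanced factorization in Step 2: the mere existence of a factorization $F = UU^*$ with $U$ zero-free in $\U$ only forces $v(U) + v(U^*) = \tau(F)$, which does not separately bound the two mean types; the symmetrization via the exponential factor $e^{icz}$ is what makes both $U/E$ and $U^*/E$ of nonpositive mean type simultaneously.
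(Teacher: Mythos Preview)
Your three-step outline matches the paper's argument in structure, and Steps~1 and~3 are essentially identical to what the paper does. There is one slip: in Step~1 you invoke ``the hypothesis that $E$ has no real zeros,'' but this is \emph{not} a hypothesis of the lemma. The paper avoids any such assumption by working with $\log^+$ throughout, using the elementary inequality $\log^+|ab| \le \log^+|a| + \log^+|b|$ with $a = F(x)/E(x)^2$ and $b = |E(x)|^2$; then Jensen's inequality and \eqref{integral-F-finite} control the first summand, while $E \in \B$ controls the second. Your additive decomposition $\log F = \log(F|E|^{-2}) + 2\log|E|$ is not directly usable if $E$ has real zeros, since the two pieces may individually fail to be integrable against $(1+x^2)^{-1}$.

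The factorization in Step~2 is where you and the paper diverge technically. You invoke Akhiezer's construction directly---pair conjugate zeros, split real zeros, then rotate by $e^{icz}$ to balance the mean types. The paper instead forms the Blaschke product $D$ from the zeros of $F$ in $\U$, writes $F = DU^2$, and observes that $U^* = DU$; since $v(D) = 0$ this gives $v(U) = v(U^*)$ automatically, with no need for an exponential correction. Both routes produce the same $U$ (up to a unimodular constant), and both rely on the Blaschke convergence condition $\sum y_n/(x_n^2+y_n^2) < \infty$ furnished by $F \in \B$. The paper's route is slightly slicker in that the symmetry $v(U) = v(U^*)$ falls out of the structure rather than being imposed by hand.
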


\begin{proof} This argument is essentially used in the proof of \cite[Theorem 15]{HV} and we include it here for completeness. 

\smallskip

Using Jensen's inequality and \eqref{integral-F-finite} we obtain
\begin{align*}
\int_{-\infty}^\infty  & \frac{\log^+|F(x)/E(x)^2|}{1+x^2} \,\dx \\
& \qquad \le \int_{-\infty}^\infty \frac{\log\big(1+|F(x)/E(x)^2|\big)}{1+x^2}\,\dx\\
& \qquad \leq \pi \log\left\{ 1 + \pi^{-1} \int_{-\infty}^{\infty} \frac{|F(x)/E(x)^2|}{1+x^2}\,\dx\right\} < \infty.
\end{align*}
Applying the elementary inequality 
$$\log^+|ab|\le \log^+|a|+\log^+|b|$$
with $a=|F(x)/E(x)^{2}|$ and $b = |E(x)|^2$, and using that $E \in \mc{B}$, we conclude that
\begin{equation*}
\int_{-\infty}^\infty \frac{\log^+|F(x)|}{1+x^2}\, \dx<\infty.
\end{equation*}
By Lemma \ref{theorem-krein} it follows that $F\in\Bc$. 

\smallskip

Since $F$ is real entire and has bounded type in $\U$, if $z_1, z_2, \ldots., z_n,\ldots$ are the zeros of $F$ in $\U$ listed with appropriate multiplicity, then by \cite[Theorem 8]{B} we have
\begin{equation*}
\sum_{n=1}^{\infty} \frac{y_n}{x_n^2 +y_n^2} < \infty\,,
\end{equation*}
where $z_n = x_n + iy_n$ and $y_n >0$. The Blaschke product 
\begin{equation*}
D(z) = \lim_{N\to \infty} \prod_{n=1}^{N} \left(1 - \frac{z}{z_n}\right)\left(1 - \frac{z}{\overline{z_n}}\right)^{-1}
\end{equation*}
defines a meromorphic function on $\C$, which is analytic in $\U$ and continuous on the closure of $\U$. Moreover, $D^{-1} F$ is entire, all of its zeros have even multiplicity and none lies in $\U$ (note that the zeros of $F$ on $\R$ have even multiplicity since $F$ is nonnegative on $\R$). Hence there exists an entire function $U$, with no zeros in $\U$, such that
\begin{align}\label{F=DU2}
F = DU^2.
\end{align}
Since $DD^*=1$ and $F=F^*$, we have $FD = (U^*)^2$. It follows that $F^2 = (UU^*)^2$. Since $F$ is nonnegative on $\R$, we obtain the representation
\begin{equation*}
F(z) = U(z) U^*(z)
\end{equation*}
for all $z \in \C$. 

\smallskip

From \eqref{F=DU2} and \cite[Theorem 9]{B} we find that $U$ has bounded type in $\U$. Since $UU^* = F =  D^*(U^*)^2$ it follows that $U^*  = DU$. Since the Blaschke product $D$ has bounded type in $\U$, the function $U^*$ has bounded type in $\U$ and therefore $U\in \Bc$. Moreover, since $1/E$ has bounded type in $\U$, we conclude that $U/E$ and $U^*/E$ have bounded type in $\U$.  Since $v(D) = 0$ and $U^* = DU$ we have $v(U) = v(U^*)$. Also, since $F=DU^2$, we have
$$2 v(E) = 2\tau(E) \geq \tau (F) = v(F) = 2 v(U),$$ 
and from this it follows that $U/E$ and $U^*/E$ have nonpositive mean type in $\U$. Finally, since $F = UU^*$, it follows from \eqref{integral-F-finite} that
\[
\int_{-\infty}^\infty |U(x)|^2\,|E(x)|^{-2}\, \dx <\infty,
\]
hence $U\in\H(E)$.
\end{proof}

\begin{lemma}\label{L1-estimate} 
Let $E$ be a Hermite-Biehler function of bounded type in $\U$, with no real zeros and such that $z\mapsto E(iz)$ is real entire. Then
$$x\mapsto L(A^2,\lambda,x)-e^{-\lambda|x|}$$ 
and 
$$x\mapsto M(B^2,\lambda,x) -e^{-\lambda|x|}$$ 
belong to $L^1\big(\R,|E(x)|^{-2}\,\dx\big)$.
\end{lemma}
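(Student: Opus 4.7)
The plan is to combine the structural information on $A$ and $B$ coming from Lemmas \ref{Sec_dBS_E_is_in_B} and \ref{HBI-RE} with the quantitative pointwise bounds of Proposition \ref{Prop10}, and then exploit the trivial pointwise inequality $|E(x)|^2 = A(x)^2 + B(x)^2$ to beat the integral into a concrete finite quantity.

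First I would extract the algebraic structure of $A^2$ and $B^2$. By Lemma \ref{Sec_dBS_E_is_in_B} the functions $A$ and $B$ belong to the Laguerre--P\'olya class. By Lemma \ref{HBI-RE}, together with the hypothesis that $E$ has no real zeros (so $A(0)\neq 0$), the function $A$ is even with $A(0)\neq 0$ and $B$ is odd with a simple zero at the origin. Consequently $A^2$ is an even Laguerre--P\'olya function with $A^2(0)>0$, and $B^2$ is an even Laguerre--P\'olya function with a double zero at the origin. This is exactly the setting in which the interpolants $L(A^2,\lambda,\cdot)$ and $M(B^2,\lambda,\cdot)$ of Section \ref{polya} are defined.

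Next I would invoke the quantitative estimates of Proposition \ref{Prop10} in the generic case $\mc{N}(A^2)\ge 4$ and $\mc{N}(B^2)\ge 4$. Combined with $A(x)^2\le |E(x)|^2$ and $B(x)^2\le |E(x)|^2$, this yields
\begin{align*}
\int_{-\infty}^\infty \big|L(A^2,\lambda,x)-e^{-\lambda|x|}\big|\,|E(x)|^{-2}\,\dx
&\le c\,\lambda(1+\lambda)\int_{-\infty}^\infty \frac{A(x)^2}{(1+x^2)|E(x)|^2}\,\dx\\
&\le c\,\lambda(1+\lambda)\int_{-\infty}^\infty\frac{\dx}{1+x^2}<\infty.
\end{align*}
For the majorant I would split the $x$-integral into a neighborhood of the origin and its complement: since $B$ has a simple zero at $0$ and $E$ has no real zeros, the function $B(x)^2/(x^2|E(x)|^2)$ extends continuously to $x=0$ and is therefore bounded on any compact set; away from $0$ one controls it by $1/x^2$ using $B(x)^2\le |E(x)|^2$, and integrability follows.

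The remaining low-degree situations must be handled by hand. Since $A$ is even with $A(0)\neq 0$ its degree is even, so either $\mc{N}(A^2)\ge 4$ (just treated) or $A$ is a nonzero constant, in which case Proposition 8 of Section \ref{polya} gives $L(A^2,\lambda,\cdot)\equiv 0$ and $|E(x)|^{-2}\le A(0)^{-2}$, making $e^{-\lambda|x|}|E(x)|^{-2}$ trivially integrable. Similarly, $B$ odd with a simple zero at $0$ has odd degree, so either $\mc{N}(B^2)\ge 6$ or $B(z)=cz$; in the latter case Proposition \ref{Sec_ILP_Prop5} (specifically \eqref{Sec2_lastcase_M}) yields $M(B^2,\lambda,\cdot)\equiv 1$, and since $|E(x)|^{-2}=(A(x)^2+c^2x^2)^{-1}$ decays like $x^{-2}$ at infinity while $1-e^{-\lambda|x|}$ is $O(|x|)$ near $0$, the integral converges. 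The main technical point is the bookkeeping in the majorant estimate near $x=0$: the potential $1/x^2$ singularity in Proposition \ref{Prop10}(ii) is precisely cancelled by the double zero of $B^2$ at the origin, and it is this cancellation that allows the passage from a pointwise bound to an $L^1$ estimate against $|E(x)|^{-2}\,\dx$.
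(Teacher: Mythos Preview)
Your proof is correct and follows essentially the same route as the paper: invoke Proposition \ref{Prop10} together with the pointwise bound $A^2,B^2\le |E|^2$ for the generic high-degree cases, and handle the residual low-degree cases by hand. Your casework is in fact slightly tighter than the paper's---you correctly observe that $\mc{N}(A^2)=2$ cannot occur when $A$ is even with $A(0)\neq 0$, and that $\mc{N}(B^2)\in\{2\}\cup\{6,10,\dots\}\cup\{\infty\}$---whereas the paper includes a short computation for $\mc{N}(A^2)=2$ that is never actually needed under the stated hypotheses.
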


\begin{proof} Observe first, from Lemmas \ref{HBI-RE} and \ref{Sec_dBS_E_is_in_B}, that the functions $z\mapsto A^2(z)$ and $z \mapsto B^2(z)$ are even Laguerre-P\'{o}lya functions with $A^2(0) \neq 0$ and $B^2$ having a double zero at the origin. We are then able to apply the results of Section \ref{polya}.

\smallskip

\noindent{\it Step 1 - Proof for $L$}. The case $\mc{N}(A^2) \geq 4$ follows from \eqref{mf-integral-small-l} combined with the fact that $E$ has no real zeros and $A^2/E^2$ is bounded on $\R$ (by \eqref{Intro_def_A_B}).

\smallskip

In the case $\mc{N}(A^2) = 2$, we must have $A^2$ given by \eqref{Sec2_eq1_NF2} and $g=g_0$ given by \eqref{Sec2_eq1_g_t}. For $x<0$ we can compute directly from \eqref{ml-el-rep} that 
\begin{equation}\label{Sec3_eq0_L_A}
\left|L(A^2, \lambda,x) - e^{-\lambda |x|} \right|= \left| A(x)^2\,\frac{\alpha^2}{A(0)^2} \frac{\big(e^{- \lambda \alpha}- e^{\lambda x}\big)}{\alpha^2 - x^2}\right|  \ll \frac{A^2(x)}{x^2}
\end{equation}
as $x \to -\infty$. The same bound holds when $x \to \infty$ by symmetry and the result follows.

\smallskip

In the case $\mc{N}(A^2) = 0$, the function $A^2$ is constant and $L(A^2, \lambda,z)$ is identically zero. The inequality \eqref{Sec3_eq0_L_A} follows trivially.

\smallskip

\noindent{\it Step 2 - Proof for $M$}. The case $\mc{N}(B^2)\geq 4$ follows from \eqref{Mf-integral-small-l} combined with the fact that $E$ has no real zeros and $B^2/E^2$ is bounded on $\R$.

\smallskip

If $\mc{N}(B^2) = 2$, we must have $B(z)^2 = Cz^{2}$ for some $C>0$. The result follows from \eqref{Sec2_lastcase_M}.
\end{proof}

\begin{lemma} \label{Sec3_lem16}
Let $\lambda>0$. Let $E$ be a Hermite-Biehler function of bounded type in $\U$ with exponential  type $\tau(E)$. Assume that $E$ has no real zeros and $z\mapsto E(iz)$ is real entire. If
\begin{align}\label{fk-in-L1}
\int_{-\infty}^\infty  e^{-\lambda|x|}\, |E(x)|^{-2}\,\dx<\infty,
\end{align}
then there exist $U,V\in \H(E)$ so that
\begin{align}
M(B^2,\lambda,z) &= U(z) U^*(z)\label{decompose-maj}\\
L(A^2,\lambda,z)&= U(z)U^*(z) - V(z) V^*(z)\label{decompose-min}
\end{align}
for all $z \in \C$.
\end{lemma}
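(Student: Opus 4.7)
\medskip

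The plan is to invoke Lemma \ref{nonnegl1-tohe} (the $UU^*$ decomposition) twice: once applied to $M(B^2,\lambda,\cdot)$ to produce $U$, and once applied to the nonnegative difference $M(B^2,\lambda,\cdot) - L(A^2,\lambda,\cdot)$ to produce $V$. The second identity \eqref{decompose-min} then follows formally by rearranging $L(A^2,\lambda,z) = M(B^2,\lambda,z) - \{M(B^2,\lambda,z) - L(A^2,\lambda,z)\} = U(z)U^*(z) - V(z)V^*(z)$.

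For the first application, I would verify the four hypotheses of Lemma \ref{nonnegl1-tohe} for $M(B^2,\lambda,\cdot)$. Lemmas \ref{HBI-RE} and \ref{Sec_dBS_E_is_in_B} (together with the fact that $E$ has no real zeros, which forces the zero of $B$ at $0$ to be simple) tell us that $B^2$ is an even Laguerre-P\'{o}lya function with a double zero at the origin, so Proposition \ref{Sec_ILP_Prop5} applies and delivers a real entire function $M(B^2,\lambda,\cdot)$. From the pointwise inequality \eqref{majorant-ineq} and $B(x)^2 \geq 0$ one concludes $M(B^2,\lambda,x) \geq e^{-\lambda|x|} \geq 0$ on $\R$. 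For the exponential type bound, Krein's theorem (Lemma \ref{theorem-krein}) combined with $v(E^*) \leq v(E)$ gives $\tau(E) = v(E)$, hence $\tau(A),\tau(B) \leq \tau(E)$ and consequently $\tau(B^2) \leq 2\tau(E)$; applying the growth estimate \eqref{al-growth} to the two $\mc{A}$-terms, and noting that $B^2(z)/z^2$ is entire of the same exponential type as $B^2$, yields $\tau(M(B^2,\lambda,\cdot)) \leq 2\tau(E)$. Integrability of $M(B^2,\lambda,\cdot)$ against $|E(x)|^{-2}\,\dx$ follows from Lemma \ref{L1-estimate} combined with the standing hypothesis \eqref{fk-in-L1}. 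Lemma \ref{nonnegl1-tohe} then produces $U \in \H(E)$ with $M(B^2,\lambda,z) = U(z)U^*(z)$.

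For the second application, set $F(z) := M(B^2,\lambda,z) - L(A^2,\lambda,z)$. By the same chain of reasoning, $A^2$ is even Laguerre-P\'{o}lya with $A^2(0) > 0$ (otherwise $E(0) = A(0) - iB(0) = 0$, contradicting the hypothesis of no real zeros), so the proposition constructing $L(A^2,\lambda,z)$ applies and produces a real entire function of exponential type at most $2\tau(E)$; hence $F$ is real entire of exponential type at most $2\tau(E)$ as well. Combining the interpolation inequalities \eqref{minorant-ineq} and \eqref{majorant-ineq} with $A^2, B^2 \geq 0$ gives the chain $L(A^2,\lambda,x) \leq e^{-\lambda|x|} \leq M(B^2,\lambda,x)$ on $\R$, so $F \geq 0$ on $\R$. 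Writing $F = \bigl(M(B^2,\lambda,\cdot) - e^{-\lambda|\cdot|}\bigr) + \bigl(e^{-\lambda|\cdot|} - L(A^2,\lambda,\cdot)\bigr)$ displays $F$ as a sum of two $L^1(\R,|E(x)|^{-2}\,\dx)$ functions by Lemma \ref{L1-estimate}. A second application of Lemma \ref{nonnegl1-tohe} then supplies $V \in \H(E)$ with $F(z) = V(z)V^*(z)$, which together with the first decomposition yields \eqref{decompose-min}.

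The main technical point to be careful about is the exponential-type bookkeeping: one must transfer the mean type of $E$ to the type of $A$ and $B$ via Krein's theorem and then confirm that neither the Laplace-transform construction of $\mc{A}(F,\lambda,z)$ nor the correction term $2g'(0)F(z)/z^2$ appearing in $M(F,\lambda,z)$ can increase the type beyond that of $F$. Once these estimates are in hand, the argument reduces to a direct verification of the hypotheses of Lemma \ref{nonnegl1-tohe}, and no further construction is required.
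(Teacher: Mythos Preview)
Your proposal is correct and follows essentially the same approach as the paper: apply Lemma \ref{nonnegl1-tohe} first to $M(B^2,\lambda,\cdot)$ to obtain $U$, then to the nonnegative difference $M(B^2,\lambda,\cdot)-L(A^2,\lambda,\cdot)$ to obtain $V$, with the hypotheses verified via Lemmas \ref{HBI-RE}, \ref{Sec_dBS_E_is_in_B}, \ref{L1-estimate}, the growth bound \eqref{al-growth}, and the standing assumption \eqref{fk-in-L1}. The only cosmetic difference is that the paper records the slightly sharper fact $\tau(A)=\tau(B)=\tau(E)$ (via \cite[Problem 30]{B}), whereas you only note $\tau(A),\tau(B)\le\tau(E)$, which is all that is needed here.
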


\begin{proof} We note first that $A$ and $B$ have exponential type $\tau(E)$. In fact, in the proof of Lemma \ref{Sec_dBS_E_is_in_B} we have already seen that $A, B \in \B$. Since
$$\frac{A(z)}{E(z)} = \frac{1}{2}\left(1 - \frac{E^*(z)}{E(z)}\right),$$
the real part of $A/E$ on $\U$ is positive. From \cite[Problem 30]{B} we have $v(A/E) = 0$ and therefore $\tau(A) = v(A) = v(E) = \tau(E)$. The same applies to $B$. 

\smallskip

From \eqref{al-growth} we obtain that $z \mapsto L(A^2,\lambda,z)$ and $z\mapsto M(B^2,\lambda,z)$ have exponential type at most $2\tau(E)$.  Lemma \ref{L1-estimate} and \eqref{fk-in-L1} imply that $x \mapsto M(B^2,\lambda,x) \in L^1\big(\R, |E(x)|^{-2}\,\dx\big)$. From Lemma \ref{nonnegl1-tohe} there exists $U\in \H(E)$ such that 
$$M(B^2,\lambda,z) = U(z)U^*(z).$$ 
Note that 
$$M(B^2,\lambda,x) - L(A^2,\lambda,x) \ge 0$$
for all $x \in\R$, and Lemma \ref{L1-estimate} implies that this difference also belongs to $L^1\big(\R$, $|E(x)|^{-2}\,\dx\big)$. It follows from another application of Lemma \ref{nonnegl1-tohe} that 
$$M(B^2,\lambda,z) - L(A^2,\lambda,z) = V(z) V^*(z),$$ 
for some $V\in\H(E)$, and hence 
$$L(A^2,\lambda,z) = U(z)U^*(z) - V(z)V^*(z).$$
\end{proof}

\subsection{Proof of Theorem \ref{Intro_Thm1}} We have now gathered all the necessary elements to prove our first theorem.

\subsubsection{Existence} From the proof of Lemma \ref{Sec3_lem16} we have seen that $z \mapsto L(A^2,\lambda,z)$ and $z\mapsto M(B^2,\lambda,z)$ have exponential type at most $2\tau(E)$. Moreover, from \eqref{minorant-ineq} and  \eqref{majorant-ineq} we find that
\begin{equation*}
L(A^2,\lambda,x) \leq e^{-\lambda |x|} \leq M(B^2,\lambda,x)
\end{equation*}
for all $x\in \R$. Let $U,V \in \mc{H}(E)$ be given by \eqref{decompose-maj} and \eqref{decompose-min}. We now use the fact that $B \notin \mc{H}(E)$, given by hypothesis (P4), to invoke \cite[Theorem 22, case $\alpha=0$]{B}. This guarantees that $\{z \mapsto K(\xi,z);\ B(\xi) = 0\}$ is a complete orthogonal set in $\mc{H}(E)$. We then have
\begin{align*}
\int_{-\infty}^\infty  M(B^2,\lambda,x)\,|E(x)|^{-2} \,\dx & = \int_{-\infty}^\infty |U(x)|^2 \,|E(x)|^{-2} \dx\\
& = \sum_{B(\xi)=0} \frac{|U(\xi)|^2}{K(\xi,\xi)}\\
& = \sum_{B(\xi)=0} \frac{e^{-\lambda|\xi|}}{K(\xi,\xi)}\,,
\end{align*}
where the last equality comes from the fact that $|U(\xi)|^2 = e^{-\lambda |\xi|}$ for all $\xi \in \R$ with $B(\xi)=0$ by \eqref{decompose-maj} and \eqref{majorant-interpolation}.

\smallskip

From the fact that $A \notin \mc{H}(E)$, an application of \cite[Theorem 22, case $\alpha=\pi/2$]{B} gives us that $\{z \mapsto K(\xi,z);\ A(\xi) = 0\}$ is also a complete orthogonal set in $\mc{H}(E)$. Therefore
\begin{align*}
\int_{-\infty}^\infty  L(A^2,\lambda,x)\,|E(x)|^{-2} \,\dx & = \int_{-\infty}^\infty \left\{|U(x)|^2 - |V(x)|^2\right\} |E(x)|^{-2} \dx\\
& = \sum_{A(\xi)=0} \frac{|U(\xi)|^2 - |V(\xi)|^2}{K(\xi,\xi)}\\
& = \sum_{A(\xi)=0} \frac{e^{-\lambda|\xi|}}{K(\xi,\xi)}\,,
\end{align*}
where the last equality comes from the fact that $|U(\xi)|^2 - |V(\xi)|^2 = e^{-\lambda |\xi|}$ for all $\xi \in \R$ with $A(\xi)=0$ by \eqref{decompose-min} and \eqref{minorant-interpolation}.

\subsubsection{Optimality} Now let $M:\C \to \C$ be an entire function of exponential type at most $2 \tau(E)$ such that $M(x) \geq e^{-\lambda |x|}$ for all $x \in \R$. If 
\begin{equation*}
\int_{-\infty}^{\infty} M(x) \,|E(x)|^{-2}\,\dx = \infty
\end{equation*}
there is nothing to prove, hence we assume that this integral is finite. Lemma \ref{nonnegl1-tohe} implies that $M(z)=W(z)W^*(z)$ for some $W\in \H(E)$ and then 
 \begin{align*}
\int_{-\infty}^{\infty} M(x) \,|E(x)|^{-2}\,\dx & = \int_{-\infty}^{\infty} |W(x)|^2 \,|E(x)|^{-2}\,\dx \\
& = \sum_{B(\xi)=0} \frac{|W(\xi)|^2}{K(\xi,\xi)}\\ 
& \ge \sum_{B(\xi)=0} \frac{e^{-\lambda|\xi|}}{K(\xi,\xi)}\,,
\end{align*}
as claimed. 

\smallskip

Let $L:\C \to \C$ be an entire function of exponential type at most $2 \tau(E)$ such that $L(x) \leq e^{-\lambda |x|}$ for all $x \in \R$. If 
\begin{equation*}
\int_{-\infty}^{\infty} L(x) \,|E(x)|^{-2}\,\dx = - \infty
\end{equation*}
there is nothing to prove, hence we assume that this integral is finite. We have already noticed the existence of a majorant $z \mapsto M(B^2, \lambda,z)$ of exponential type at most $2\tau(E)$ for $e^{-\lambda |x|}$. In particular, the function $z \mapsto M(B^2, \lambda,z) - L(z)$ has exponential type at most $2\tau(E)$, is nonnegative on $\R$ and belongs to $L^1(\R, |E(x)|^{-2}\,\dx)$. A new application of Lemma \ref{nonnegl1-tohe} gives us that $M(B^2, \lambda,z) - L(z) = T(z)T^*(z)$ for some $T \in \mc{H}(E)$. From \eqref{decompose-maj} we then have $L(z) = U(z)U^*(z) - T(z)T^*(z)$, and finally
 \begin{align*}
\int_{-\infty}^{\infty} L(x) \,|E(x)|^{-2}\,\dx & = \int_{-\infty}^{\infty} \left\{|U(x)|^2 - |T(x)|^2\right\} \,|E(x)|^{-2}\,\dx \\
& = \sum_{A(\xi)=0} \frac{|U(\xi)|^2 - |T(\xi)|^2}{K(\xi,\xi)}\\ 
& \le \sum_{A(\xi)=0} \frac{e^{-\lambda|\xi|}}{K(\xi,\xi)}.
\end{align*}
This completes the proof of Theorem \ref{Intro_Thm1}.


\section{Homogeneous spaces and radial symmetrization}\label{Hom_spaces}

In this section we shall prove Theorem \ref{Thm1}. An important ingredient of the proof is the specialization of Theorem \ref{Intro_Thm1} to a suitable class of homogeneous de Branges spaces. We start by briefly reviewing these spaces and their relevant properties.

\subsection{The homogeneous spaces $\mc{H}(E_{\nu})$} Let $\nu > -1$. A space $\mc{H}(E)$ is said to be homogeneous of order $\nu$ if, for all $0 < a < 1$ and all $F \in \mc{H}(E)$, the function $z \mapsto a^{\nu +1}F(az)$ belongs to $\mc{H}(E)$ and has the same norm as $F$.  Such spaces were characterized by L. de Branges in \cite{B2} (see also \cite[Section 50]{B}). 

\smallskip

We consider here the special family of homogeneous spaces $\mc{H}(E_{\nu})$ where
\begin{equation*}
E_{\nu}(z) = A_{\nu}(z) - iB_{\nu}(z),
\end{equation*}
and the real entire functions $A_{\nu}$ and $B_{\nu}$ are given by \eqref{Intro_A_nu} and \eqref{Intro_B_nu}. From \cite[Section 50, Problems 227 and 228]{B} we see that in fact $E_{\nu}$ is a Hermite-Biehler function with no real zeros and $\mc{H}(E_{\nu})$ is homogeneous of order $\nu$. Moreover, $E_{\nu}$ has bounded type in $\mc{U}$ with mean type $1$ and, by Krein's theorem (Lemma \ref{theorem-krein}), it has exponential type $1$ as well. Since $A_{\nu}$ is even and $B_{\nu}$ is odd, the function $z \mapsto E_{\nu}(iz)$ is real entire. This accounts for hypotheses (P1)\,-\,(P3) of Theorem \ref{Intro_Thm1}.

\smallskip

From the proof of Lemma \ref{Sec3_lem16} we see that $\tau(A_{\nu}) = \tau(B_{\nu}) = \tau(E_{\nu}) =1$. We gather other relevant facts about the spaces $\mc{H}(E_{\nu})$ in the next lemma. In particular, the item (ii) below is the key identity that connects the theory of de Branges spaces to the multidimensional Euclidean problems.

\begin{lemma}\label{Sec5_rel_facts}
Let $\nu > -1$. The following properties hold:
\begin{enumerate}
\item[(i)] There exist positive constants $a_\nu,b_\nu$ such that 
\begin{align}\label{Lem17_i}
a_\nu |x|^{2\nu+1} \le |E_{\nu}(x)|^{-2} \le b_\nu |x|^{2\nu+1}
\end{align}
for all $x \in \R$ with $|x|\geq1$.
\smallskip

\item[(ii)] For $F\in\H(E_\nu)$ we have the identity 
\begin{align} \label{Lem17_ii}
\int_{-\infty}^\infty |F(x)|^{2}\,|E_{\nu}(x)|^{-2}\, \dx = c_\nu \int_{-\infty}^\infty |F(x)|^2 \,|x|^{2\nu+1} \,\dx\,,
\end{align}
with $c_\nu = \pi \,2^{-2\nu-1}\, \Gamma(\nu+1)^{-2}$.

\smallskip

\item[(iii)] An entire function $F$ belongs to $\H(E_\nu)$ if and only if $F$ has exponential type at most $1$ and
\begin{equation}\label{Sec4_eq0_int_cond}
\int_{-\infty}^\infty |F(x)|^2 \,|x|^{2\nu+1}\, \dx <\infty.
\end{equation}
\end{enumerate}
\end{lemma}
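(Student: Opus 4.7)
My plan is to treat the three parts in their natural order. For part (i), I would start from the identity $|E_{\nu}(x)|^2 = A_{\nu}(x)^2 + B_{\nu}(x)^2$ and apply the Bessel representations \eqref{Intro_Bessel1}--\eqref{Intro_Bessel2} to rewrite
\begin{equation*}
|E_{\nu}(x)|^2 = \Gamma(\nu+1)^2 \,(|x|/2)^{-2\nu}\,\bigl(J_{\nu}(x)^2 + J_{\nu+1}(x)^2\bigr).
\end{equation*}
The asymptotic \eqref{Asymptotic_Bessel_functions} together with its companion for $J_{\nu+1}$ (which differs by a phase shift of $\pi/2$), combined with $\cos^2 + \sin^2 = 1$, yields $J_{\nu}(x)^2 + J_{\nu+1}(x)^2 = 2/(\pi|x|) + O(|x|^{-2})$ as $|x|\to\infty$. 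Hence $|x|^{-(2\nu+1)}|E_{\nu}(x)|^{-2} \to c_{\nu}$ at infinity. Since $E_{\nu}$ has no real zeros, this ratio is a continuous strictly positive function on $\{|x|\ge 1\}$ with positive limit at infinity, which immediately gives the two-sided bound \eqref{Lem17_i}.

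For part (ii), my approach is to identify $\H(E_{\nu})$ with a Paley-Wiener-type space for the Hankel transform of order $\nu$. Concretely, I would show that the assignment
\begin{equation*}
g \mapsto F(z) = \int_0^1 g(\sigma)\,A_{\nu}(\sigma z)\,\sigma^{2\nu+1}\,\d\sigma
\end{equation*}
produces entire functions of exponential type at most $1$ and that this map is an isometric isomorphism from $L^2\bigl([0,1],\sigma^{2\nu+1}\,\d\sigma\bigr)$ onto $\H(E_{\nu})$; this is the form that the homogeneous-space structure theorem of de Branges \cite[Section 50]{B} takes here. The Hankel--Plancherel identity then yields
\begin{equation*}
c_{\nu}\int_{\R}|F(x)|^2|x|^{2\nu+1}\,\dx \;=\; C_{\nu}\int_0^1 |g(\sigma)|^2 \sigma^{2\nu+1}\,\d\sigma,
\end{equation*}
while the de Branges isometry gives the same right-hand side with $\int_{\R}|F(x)|^2|E_{\nu}(x)|^{-2}\,\dx$ on the left; comparing the two produces \eqref{Lem17_ii}. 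An alternative route, avoiding the general homogeneous-space theory, is to verify \eqref{Lem17_ii} directly on the dense span of reproducing kernels $\{K_{\nu}(w,\cdot)\}$ using the closed-form \eqref{Intro_Def_K_0}, reducing matters to explicit Bessel integrals. This step is the main obstacle of the lemma: once the Hankel/homogeneous identification is in hand, \eqref{Lem17_ii} is immediate, but setting it up carefully is the technical heart of the argument.

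Part (iii) then combines the first two. The forward direction is standard: any $F\in\H(E_{\nu})$ has exponential type at most $\tau(E_{\nu})=1$ by general de Branges theory, and \eqref{Sec4_eq0_int_cond} is immediate from (ii). For the converse, suppose $F$ is entire of exponential type at most $1$ satisfying \eqref{Sec4_eq0_int_cond}. The bound \eqref{Lem17_i} gives
\begin{equation*}
\int_{|x|\ge 1}|F(x)|^2|E_{\nu}(x)|^{-2}\,\dx \;\le\; b_{\nu}\int_{|x|\ge 1}|F(x)|^2|x|^{2\nu+1}\,\dx \;<\;\infty,
\end{equation*}
while the integral over $[-1,1]$ is finite by continuity of $F$ and of $|E_{\nu}|^{-2}$, so $F\in L^2\bigl(\R,|E_{\nu}|^{-2}\,\dx\bigr)$. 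A Jensen-type estimate as in the proof of Lemma \ref{nonnegl1-tohe}, combined with $\int_{\R}|F(x)|^2(1+x^2)^{-1}\,\dx<\infty$ (which holds by splitting at $|x|=1$ and using $\nu>-1$), gives $\int_{\R}\log^+|F(x)|(1+x^2)^{-1}\,\dx<\infty$, so $F\in\B$ by Krein's theorem (Lemma \ref{theorem-krein}). Bounded type of $F/E_{\nu}$ and $F^*/E_{\nu}$ in $\U$ follows from this together with the bounded type of $1/E_{\nu}$ in $\U$ (valid since $E_{\nu}$ is Hermite-Biehler with no real zeros), and nonpositive mean type is verified from
\begin{equation*}
\limsup_{y\to\infty} y^{-1}\log\bigl|F(iy)/E_{\nu}(iy)\bigr| \;\le\; \tau(F)\,-\,v(E_{\nu}) \;\le\; 1-1 = 0,
\end{equation*}
where we use that $y^{-1}\log|E_{\nu}(iy)|\to v(E_{\nu})=1$ as a genuine limit. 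The same bound applied to $F^*/E_{\nu}$ completes the verification that $F\in\H(E_{\nu})$.
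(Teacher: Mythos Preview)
Your proposal is correct. The paper's own proof is simply a one-line citation to \cite[Equations (5.1), (5.2) and Lemma 16]{HV}, so your sketch is in fact considerably more detailed than what the paper provides; what you outline is essentially the argument one finds when one chases that reference (and \cite[Section 50]{B}). Part (i) via the Bessel asymptotics is exactly how Holt--Vaaler establish their (5.1), and your part (iii) matches their Lemma 16 argument closely. For part (ii), your Hankel--Plancherel identification is the substance of de Branges's structure theorem for homogeneous spaces combined with Holt--Vaaler's (5.2); you correctly flag this as the technical heart, and your alternative route via reproducing kernels is also viable. One minor remark on (iii): your inequality $v(F/E_{\nu})\le \tau(F)-v(E_{\nu})$ is fine as written because $y^{-1}\log|E_{\nu}(iy)|$ has a genuine limit, but it is cleaner (and closer to how the cited references argue) to invoke additivity of mean type for bounded-type quotients, $v(F/E_{\nu})=v(F)-v(E_{\nu})$, together with $v(F)\le\tau(F)$ from Krein's theorem.
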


\begin{proof} These facts are all contained in \cite[Equations (5.1), (5.2) and Lemma 16]{HV}.
\end{proof}

From the asymptotics \eqref{Asymptotic_Bessel_functions}, together with \eqref{Intro_Bessel1} and \eqref{Intro_Bessel2}, we find that $A_{\nu}$ and $B_{\nu}$ do not satisfy the integrability condition \eqref{Sec4_eq0_int_cond}. Therefore $A_{\nu}, B_{\nu} \notin \mc{H}(E_{\nu})$ and the final hypothesis (P4) is verified. The general framework of Theorem \ref{Intro_Thm1} is then available for the spaces $\mc{H}(E_{\nu})$.

\subsection{Radial symmetrization} We now briefly recall a couple of results from \cite[Section 6]{HV} that will be useful to establish a connection between the one-dimensional theory and the multidimensional theory. 
\begin{lemma}\label{type-to-ntype} 
Let $F:\C\to \C$ be an even entire function with power series representation
\begin{align*}
F(z) = \sum_{k=0}^\infty c_k z^{2k}
\end{align*}
and let $\psi_N(F):\C^N \to \C$ be the entire function
\begin{align*}
\psi_N(F)({\bf z}) = \sum_{k=0}^\infty c_k (z_1^2 + \ldots + z_n^2)^k.
\end{align*}
The following properties hold:
\smallskip
\begin{itemize}
\item[(i)] $F$ has exponential type if and only if $\psi_N(F)$ has exponential type, and $\tau(F) = \tau(\psi_N(F))$. 

\smallskip

\item[(ii)] We have
\begin{align*}
\tfrac12\, \omega_{N-1} \int_{-\infty}^\infty F(x)\,|x|^{2\nu+1} \,\dx = \int_{\R^N} \psi_N(F)({\bf x}) \,|{\bf x}|^{2\nu+2-N}\,\d{\bf x},
\end{align*}
where $\nu >1$ and $\omega_{N-1} = 2 \pi^{N/2} \,\Gamma(N/2)^{-1}$ is the surface area of the unit sphere in $\R^N$, provided that both integrals are absolutely convergent.
\end{itemize}
\end{lemma}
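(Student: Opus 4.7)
The key observation driving both parts is that since $F$ is even with power series $F(w)=\sum c_k w^{2k}$, we may write
\[
\psi_N(F)(\z) \;=\; \sum_{k=0}^{\infty} c_k\,(z_1^2+\cdots+z_N^2)^k,
\]
and for any branch $w$ of $\sqrt{z_1^2+\cdots+z_N^2}$ we have $\psi_N(F)(\z)=F(w)$. The plan is therefore to reduce both statements to one-variable computations by controlling $w$ in terms of $\z$.

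For part (i), I would start with the forward direction, assuming $F$ has exponential type $\tau=\tau(F)$. Fix $\varepsilon>0$; then $|F(w)|\le C_\varepsilon\,e^{(\tau+\varepsilon)|w|}$ for all $w\in\C$. Writing $\z=\x+i\y$ with $\x,\y\in\R^N$, a direct expansion gives
\[
|z_1^2+\cdots+z_N^2|^2 \;=\; (|\x|^2-|\y|^2)^2 + 4\langle \x,\y\rangle^2.
\]
On the other hand, by the definition of $\|\cdot\|$,  $\|\z\|^2$ is the largest eigenvalue of the $2\times 2$ Gram-type matrix
\[
G \;=\; \begin{pmatrix} |\x|^2 & \langle\x,\y\rangle \\ \langle\x,\y\rangle & |\y|^2 \end{pmatrix},
\]
whose eigenvalues are $\tfrac12\bigl[(|\x|^2+|\y|^2)\pm\sqrt{(|\x|^2-|\y|^2)^2+4\langle\x,\y\rangle^2}\bigr]$. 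Thus $|w|^2=2\|\z\|^2-(|\x|^2+|\y|^2)$, and since the trace of $G$ dominates its largest eigenvalue, $|\x|^2+|\y|^2\ge\|\z\|^2$, yielding the clean bound $|w|\le\|\z\|$. Consequently $|\psi_N(F)(\z)|\le C_\varepsilon e^{(\tau+\varepsilon)\|\z\|}$, and letting $\|\z\|\to\infty$ and then $\varepsilon\to 0$ gives $\tau(\psi_N(F))\le\tau(F)$. For the reverse inequality, specialize to the slice $\z=(z,0,\ldots,0)$, on which $\psi_N(F)(\z)=F(z)$ and $\|\z\|=|z|$; this immediately gives $\tau(F)\le\tau(\psi_N(F))$. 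The implication "$\psi_N(F)$ entire of exponential type $\Rightarrow$ $F$ entire of exponential type" follows from the same slice restriction, and the converse from the bound above together with the fact that the power series defining $\psi_N(F)$ converges on all of $\C^N$ whenever that of $F$ converges on $\C$.

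For part (ii), the identity is essentially a polar coordinate calculation. For $\x\in\R^N$, $\psi_N(F)(\x)=F(|\x|)$ by the evenness of $F$. Writing $\d\x=r^{N-1}\dr\,\d\sigma$ with $\sigma$ the surface measure on $\SS^{N-1}$,
\[
\int_{\R^N}\psi_N(F)(\x)\,|\x|^{2\nu+2-N}\,\d\x \;=\; \omega_{N-1}\int_0^\infty F(r)\,r^{2\nu+1}\,\dr,
\]
and by evenness the right-hand side equals $\tfrac12\omega_{N-1}\int_{-\infty}^\infty F(x)|x|^{2\nu+1}\,\dx$. Absolute convergence of either integral justifies the use of Fubini and polar coordinates, giving the stated equality.

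The only substantive obstacle is the sharp norm comparison $|w|\le\|\z\|$ in part (i); everything else is either a direct power series manipulation or standard polar coordinates. The identity $|w|^2+(|\x|^2+|\y|^2)=2\|\z\|^2$ expressing the two eigenvalues of $G$ in terms of $|w|$ and $\|\z\|$ is the geometric heart of the argument, and ensures that the bound matches $\tau(F)$ exactly rather than merely up to a constant factor.
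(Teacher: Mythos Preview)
Your argument is correct. The paper itself does not prove this lemma; it simply cites \cite[Lemma 18]{HV}, so there is no ``paper's own proof'' to compare against beyond that reference. What you have supplied is a complete, self-contained argument.

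A brief comment on the substance: the core of your part (i) is the sharp inequality $|w|\le\|\z\|$ for $w^2=z_1^2+\cdots+z_N^2$, and your derivation via the Gram matrix $G=\bigl(\begin{smallmatrix}|\x|^2 & \langle\x,\y\rangle\\ \langle\x,\y\rangle & |\y|^2\end{smallmatrix}\bigr)$ is clean and exact. The identification $\|\z\|^2=\lambda_+(G)$ (since $\|\z\|^2=\sup_{|\t|\le 1}\t^\top(\x\x^\top+\y\y^\top)\t$ and $\x\x^\top+\y\y^\top$ has the same nonzero spectrum as $G$), together with $\lambda_+-\lambda_-=\sqrt{(|\x|^2-|\y|^2)^2+4\langle\x,\y\rangle^2}=|w|^2$ and $\lambda_-\ge 0$, gives $|w|^2=2\lambda_+-\operatorname{tr}G\le\lambda_+=\|\z\|^2$ exactly as you wrote. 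This yields $\tau(\psi_N(F))\le\tau(F)$, and the slice $\z=(z,0,\dots,0)$ gives the reverse inequality and the ``only if'' direction. Part (ii) is indeed just polar coordinates plus the evenness of $F$. No gaps.
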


\begin{proof} This is \cite[Lemma 18]{HV}.
\end{proof}

For $N\geq 2$ let $SO(N)$ denote the compact topological group of real orthogonal $N \times N$ matrices $M$ with $\det M =1$. Let $\sigma$ be its left-invariant (and also right-invariant since $SO(N)$ is compact) Haar measure, normalized so that $\sigma (SO(N)) =1$. For an entire function $\F: \C^N \to \C$ we define its radial symmetrization $\widetilde{\F}:\C^N \to \C$ by 
\begin{equation*}
\widetilde{\F}(\z) = \int_{SO(N)}\F(M\z)\,\d \sigma(M).
\end{equation*}
If $N=1$ we define $\widetilde{\F}(z) := \tfrac12\{\F(z) + \F(-z)\}$. The following lemma lists the relevant properties of $\widetilde{\F}$.

\begin{lemma}\label{Lemma19_HV}
Let $\F: \C^N \to \C$ be an entire function. Then $\widetilde{\F}:\C^N \to \C$ is an entire function that satisfies the following properties:
\begin{itemize}
\item[(i)] $\widetilde{\F}$ has a power series expansion of the form
\begin{equation*}
\widetilde{\F}({\bf z}) = \sum_{k=0}^\infty c_k (z_1^2 + \ldots + z_n^2)^k.
\end{equation*}

\smallskip

\item[(ii)] If $\F$ has exponential type then $\widetilde{\F}$ has exponential type and $\tau\big(\widetilde{\F}\big) \leq \tau(\F)$.

\smallskip

\end{itemize}
\end{lemma}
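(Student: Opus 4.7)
The plan is to first show that $\widetilde{\F}$ is entire, then deduce the form of its power-series expansion from $SO(N)$-invariance, and finally to control its growth by the $SO(N)$-invariance of the norm $\|\cdot\|$.

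For entireness, observe that $(M,\z)\mapsto \F(M\z)$ is continuous on $SO(N)\times \C^N$ and holomorphic in $\z$ for each fixed $M$. Since $SO(N)$ is compact, on any polydisk the integrand is uniformly bounded, so $\widetilde{\F}$ is well defined, continuous, and (by Morera's theorem applied in each variable, or equivalently by differentiating under the integral sign using dominated convergence on $\{M\z : M\in SO(N)\}$, which is compact) entire on $\C^N$. Expand
\begin{equation*}
\widetilde{\F}(\z) = \sum_{n=0}^{\infty} P_n(\z),
\end{equation*}
where $P_n$ is a homogeneous polynomial of degree $n$. By the two-sided invariance of the normalized Haar measure $\sigma$, one checks that $\widetilde{\F}(M\z)=\widetilde{\F}(\z)$ for every $M\in SO(N)$, and this invariance passes termwise to the homogeneous components, so each $P_n$ is $SO(N)$-invariant.

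For part (i), the key input is the classical fact that, for $N\geq 2$, the $\R$-algebra of $SO(N)$-invariant polynomials on $\R^N$ is generated by $x_1^2+\ldots+x_N^2$. Applying this to the real and imaginary parts of $P_n|_{\R^N}$ gives $P_n(\x)=Q_n(x_1^2+\ldots+x_N^2)$ for some polynomial $Q_n$, and homogeneity of $P_n$ forces $P_n\equiv 0$ when $n$ is odd and $P_n(\x)=c_{n/2}(x_1^2+\ldots+x_N^2)^{n/2}$ when $n$ is even. By analytic continuation this identity extends to all of $\C^N$, giving the expansion displayed in (i). (The case $N=1$ is immediate from the definition.)

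For part (ii), I would note that the norm $\|\cdot\|$ defined in the introduction is $SO(N)$-invariant: if $M\in SO(N)$ and $|\t|\leq 1$, then $\sum_n (M\z)_n t_n = \sum_k z_k (M^T\t)_k$ with $|M^T\t|\leq 1$, and taking the supremum over $\t$ yields $\|M\z\|=\|\z\|$. Given $\epsilon>0$, choose $C_\epsilon$ such that $|\F(\w)|\leq C_\epsilon \,e^{(\tau(\F)+\epsilon)\|\w\|}$ for all $\w\in\C^N$; then
\begin{equation*}
|\widetilde{\F}(\z)| \leq \int_{SO(N)} |\F(M\z)|\,\d\sigma(M) \leq C_\epsilon\, e^{(\tau(\F)+\epsilon)\|\z\|},
\end{equation*}
so $\widetilde{\F}$ has exponential type at most $\tau(\F)+\epsilon$; letting $\epsilon\to 0$ gives $\tau(\widetilde{\F})\leq\tau(\F)$.

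I do not foresee a serious obstacle: everything is essentially a consequence of (a) holomorphic dependence under an integral over a compact group, (b) the first fundamental theorem of invariant theory for $SO(N)$, and (c) the orthogonal invariance of the Euclidean ball defining $\|\cdot\|$. The only point that requires a moment of care is lifting the real invariant-theory statement to $\C$-valued polynomials on $\C^N$, which is handled by splitting into real and imaginary parts on $\R^N$ and then continuing analytically.
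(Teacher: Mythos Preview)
Your argument is correct and complete. The paper itself does not prove this lemma; it simply cites \cite[Lemma 19]{HV}, so there is no in-paper proof to compare against, but your sketch is the standard route and would serve as a self-contained replacement for the citation.
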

\begin{proof} This is \cite[Lemma 19]{HV}.
\end{proof}

\subsection{Proof of Theorem \ref{Thm1}} We are now in position to prove our second theorem. Recall that we write $\F_{\lambda}(\x) = e^{-\lambda |\x|}$. 

\subsubsection{Proof of part {\rm (i)}} For $\kappa >0$ observe that $\L \in \E_{\delta}^{N-} (\mc{F}_{\lambda})$ if and only if $\x \mapsto \L(\kappa \x) \in \E_{\kappa\delta}^{N-} (\mc{F}_{\kappa\lambda})$. An analogous property holds for the majorants. This is enough to conclude part (i).

\subsubsection{Proof of part {\rm (ii)}} Let $L \in \E_{\delta}^{1-} (\mc{F}_{\lambda})$ be such that 
\begin{equation}\label{Sec4_added_eq}
\int_{-\infty}^\infty \left\{ e^{-\lambda|x|} - L(x)\right\} \,|x|^{2\nu+1} \,\dx < \infty.
\end{equation}
By considering $z \mapsto \tfrac{1}{2} \{L(z) + L(-z)\}$ we may assume that $L$ is even (note that this does not change the value of the integral in \eqref{Sec4_added_eq}). By Lemma \ref{type-to-ntype} we have that $\psi_{N}(L) \in \E_{\delta}^{N-} (\mc{F}_{\lambda})$, and a change to radial variables gives us
\begin{align*}
\int_{\R^N} \Big\{e^{-\lambda|\x|}  - \psi_N(L)&(\x)\Big\}\, |\x|^{2\nu+2 - N}\,\dxx \\
& = \tfrac12\, \omega_{N-1} \int_{-\infty}^\infty \Big\{ e^{-\lambda|x|} - L(x)\Big\} \,|x|^{2\nu+1} \,\dx.
\end{align*}
Hence
\begin{equation*}\label{Sec5_eq3_pI}
U_{\nu}^{N-}(\delta, \lambda) \leq  \tfrac12\, \omega_{N-1} \,U_{\nu}^{1-}(\delta, \lambda).
\end{equation*}

\smallskip

On the other hand, let $\L \in \E_{\delta}^{N-} (\mc{F}_{\lambda})$ be such that 
\begin{align*}
\int_{\R^N} \Big\{e^{-\lambda|\x|} - \L(\x)\Big\}\, |\x|^{2\nu+2 - N}\,\dxx < \infty.
\end{align*}
By Lemma \ref{Lemma19_HV} we have that $\widetilde{\L} \in \E_{\delta}^{N-} (\mc{F}_{\lambda})$ and it has a power series expansion 
\begin{equation*}
\widetilde{\L}({\bf z}) = \sum_{k=0}^\infty c_k (z_1^2 + \ldots + z_n^2)^k.
\end{equation*}
Define the entire function $L:\C \to \C$ by
\begin{equation*}
L(z) = \widetilde{\L}(z,0,0,\ldots,0) = \sum_{k=0}^\infty c_k z^{2k}.
\end{equation*}
By Lemma \ref{type-to-ntype} we know that $L \in \E_{\delta}^{1-} (\F_{\lambda})$. An application of Fubini's theorem now gives us
\begin{align*}
\tfrac12\, \omega_{N-1} \int_{-\infty}^\infty & \Big\{ e^{-\lambda|x|}  - L(x)\Big\} \,|x|^{2\nu+1} \,\dx\\
& = \int_{\R^N} \Big\{e^{-\lambda|\x|} - \widetilde{\L}(\x)\Big\}\, |\x|^{2\nu+2 - N}\,\dxx\\
& = \int_{\R^N} \int_{SO(N)}  \Big\{e^{-\lambda|M\x|} - \L(M\x)\Big\}\, |M\x|^{2\nu+2 - N}\,\d \sigma(M)\,\dxx\\
& =  \int_{SO(N)}   \int_{\R^N} \Big\{e^{-\lambda|M\x|} - \L(M\x)\Big\}\, |M\x|^{2\nu+2 - N}\,\dxx\,\d \sigma(M)\\
& =  \int_{\R^N} \Big\{e^{-\lambda|\x|} - \L(\x)\Big\}\, |\x|^{2\nu+2 - N}\,\dxx.
\end{align*}
Hence
\begin{equation*}\label{Sec5_eq3_pII}
 \tfrac12\, \omega_{N-1} \,U_{\nu}^{1-}(\delta, \lambda) \leq U_{\nu}^{N-}(\delta, \lambda),
\end{equation*}
and this concludes the proof for the minorant case. The majorant case is treated analogously.

\subsubsection{Proof of part {\rm (iii)}} We are now interested in computing $U_{\nu}^{1\pm}(2, \lambda)$. Let us start with the majorant case. First observe that 
\begin{equation*}
\int_{-\infty}^{\infty} e^{-\lambda|x|}\,|x|^{2\nu +1} \,\dx = \frac{2 \,\Gamma(2\nu +2)}{\lambda^{2\nu +2}}.
\end{equation*}
Let $M:\C \to \C$ be an entire function of exponential type at most $2$ such that $M(x) \geq e^{-\lambda|x|}$ for all $x \in \R$ and 
\begin{equation*}
\int_{-\infty}^{\infty}M(x)\,|x|^{2\nu +1}\,\dx <\infty.
\end{equation*}
From \eqref{Lem17_i} this implies that
\begin{equation*}
\int_{-\infty}^{\infty}M(x)\,|E_{\nu}(x)|^{-2}\,\dx <\infty
\end{equation*}
and thus, by Lemma \ref{nonnegl1-tohe}, we know that $M(z) = W(z)W^*(z)$ for some $W \in \mc{H}(E_{\nu})$. From Theorem \ref{Intro_Thm1} and the key identity \eqref{Lem17_ii} we have
\begin{align}\label{Sec5_eq3_majorant_B}
\begin{split}
\sum_{B_{\nu}(\xi)=0} \frac{e^{-\lambda|\xi|}}{K_{\nu}(\xi,\xi)} & \leq \int_{-\infty}^{\infty} M(x) \,|E_{\nu}(x)|^{-2}\,\dx\\
& =  \int_{-\infty}^{\infty} |W(x)|^2 \,|E_{\nu}(x)|^{-2}\,\dx\\
& =  c_{\nu}\int_{-\infty}^{\infty} |W(x)|^2 \,|x|^{2\nu +1}\,\dx\\
& =  c_{\nu}\int_{-\infty}^{\infty}M(x)\,|x|^{2\nu +1}\,\dx.
\end{split}
\end{align}
Moreover, we have seen that the entire function $z \mapsto M(B_{\nu}^2, \lambda, z)$ of exponential type at most $2$ verifies the equality in \eqref{Sec5_eq3_majorant_B}. Therefore
\begin{equation*}
U_{\nu}^{1+}(2, \lambda) = \sum_{B_{\nu}(\xi)=0} \frac{e^{-\lambda|\xi|}}{c_{\nu}K_{\nu}(\xi,\xi)} - \frac{2 \,\Gamma(2\nu +2)}{\lambda^{2\nu +2}}.
\end{equation*} 

\smallskip

The minorant part follows in a similar way. Indeed, if $L:\C \to \C$ is an entire function of exponential type at most $2$ such that $L(x) \leq e^{-\lambda|x|}$ for all $x \in \R$ and
\begin{equation*}
\int_{-\infty}^{\infty}L(x)\,|x|^{2\nu +1}\,\dx > -\infty,
\end{equation*}
we use the existence of a majorant $z \mapsto M(B_{\nu}^2, \lambda, z)$ to apply Lemma \ref{nonnegl1-tohe} to the nonnegative function $x \mapsto M(B_{\nu}^2, \lambda, x) - L(x)$ and obtain the representation $L(z) = U(z)U^*(z) - T(z)T^*(z)$ with $U,T \in \mc{H}(E_{\nu})$. The rest follows as in \eqref{Sec5_eq3_majorant_B} using the minorant part of Theorem \ref{Intro_Thm1} and the key identity \eqref{Lem17_ii}. The entire function $z \mapsto L(A_{\nu}^2, \lambda, z)$ of exponential type at most $2$ verifies the equality.

\subsubsection{Proof of part {\rm (iv)}} For $N \geq 1$ and $\delta = 2$ we define
\begin{equation}\label{Sec4_eq0_def_L_nu}
\L_{\nu}(2, \lambda, \z) := \psi_N \big(L(A_{\nu}^2, \lambda, \cdot)\big)(\z)
\end{equation}
and
\begin{equation}\label{Sec4_eq0_def_M_nu}
\M_{\nu}(2, \lambda, \z) := \psi_N \big(M(B_{\nu}^2, \lambda, \cdot)\big)(\z).
\end{equation}
These functions have exponential type at most $2$ (by Lemma \ref{type-to-ntype}) and satisfy
$$\L_{\nu}(2, \lambda, \x)  \leq e^{-\lambda|\x|} \leq \M_{\nu}(2, \lambda, \x)$$
for all $\x \in \R^N$. Moreover,
\begin{align*}
\int_{\R^N} \Big\{ \M_{\nu}&(2, \lambda, \x) - e^{-\lambda|\x|}\Big\}\,|\x|^{2\nu +2 - N}\,\dxx \\
& = \tfrac12\, \omega_{N-1}\int_{-\infty}^{\infty} \Big\{ M(B_{\nu}^2,\lambda, x) - e^{-\lambda|x|}\Big\}\,|x|^{2\nu +1}\,\dx\\[0.5em]
& =  \tfrac12\, \omega_{N-1} \,U_{\nu}^{1+}(2,\lambda) \\[0.5em]
& = U_{\nu}^{N+}(2,\lambda).
\end{align*}
The computation for the minorant is analogous. The existence of extremal functions for general $\delta >0$ follows by the change of variables given by part (i). This completes the proof.


\section{Difference of squares in $\mc{H}(E_{\nu})$}\label{Diff_Squares}

When dealing with the sort of extremal problems presented here, associated to a de Branges space $\mc{H}(E)$ with $E$ of bounded type in $\U$, an important feature for optimality considerations is the ability to write a real entire function of exponential type at most $2 \tau(E)$ whose restriction to $\R$ belongs to $L^1\big(\R,|E(x)|^{-2}\,\dx\big)$ as a difference of squares
$$F(z) = U(z)U^*(z) - V(z)V^*(z)$$
where $U, V \in \mc{H}(E)$. This was accomplished in Lemma \ref{nonnegl1-tohe} for a nonnegative $F$ and, more generally, in Lemma \ref{Sec3_lem16} and in the proofs of Theorems \ref{Intro_Thm1} and \ref{Thm1}, for an $F$ that admits a nonnegative majorant  of exponential type at most $2 \tau(E)$ in $L^1\big(\R,|E(x)|^{-2}\,\dx\big)$. 

\smallskip

In this section we work with the homogeneous spaces $\mc{H}(E_{\nu})$ and extend this construction to {\it any} real entire function $F:\C \to \C$ of exponential type at most $2$ (recall that $\tau(E_{\nu})=1$) such that $F \in L^1\big(\R,|E_{\nu}(x)|^{-2}\,\dx\big)$. 

\subsection{Revisiting a result of Plancherel and P\'{o}lya} In \cite[\S 32]{PP} Plancherel and P\'{o}lya observed that if an entire function $F:\C \to \C$ has exponential type at most $\delta$ and belongs to $L^p(\R)$, for some $1\leq p \leq \infty$, then $F'$ has the same properties. The case $p=\infty$ is originally due to Bernstein \cite{Ber}. We now extend this result to the homogeneous spaces $\mc{H}(E_{\nu})$.

\begin{theorem}\label{PP-in-HE} 
Let $\nu>-1$ and $\delta >0$. Let $F:\C \to \C$ be an entire function of exponential type at most $\delta$ such that $F\in L^p\big(\R, |E_\nu(x)|^{-2}\, \dx\big)$, for some $1 \leq p \leq \infty$. Then $F'$ has exponential type at most $\delta$ and belongs to $L^p\big(\R, |E_\nu(x)|^{-2} \,\dx\big)$. 
\end{theorem}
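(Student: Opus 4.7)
The plan is to separate the conclusion into the exponential-type bound and the weighted $L^p$ bound. For the type claim, fix $\epsilon>0$; by hypothesis there exists $C_\epsilon$ with $|F(w)|\leq C_\epsilon e^{(\delta+\epsilon)|w|}$ for all $w\in\C$. Cauchy's formula on the unit circle,
$$F'(z)=\frac{1}{2\pi i}\oint_{|w-z|=1}\frac{F(w)}{(w-z)^2}\,\dw,$$
yields $|F'(z)|\leq C_\epsilon e^{\delta+\epsilon}e^{(\delta+\epsilon)|z|}$, and letting $\epsilon\to 0$ gives $\tau(F')\leq\delta$.

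The $L^p$ claim for $p=\infty$ is immediate from the classical Bernstein inequality $\|F'\|_\infty\leq\delta\|F\|_\infty$: since $E_\nu$ has no real zeros, $|E_\nu|^{-2}$ is continuous and strictly positive on $\R$, and the corresponding measure is mutually absolutely continuous with Lebesgue measure, so $L^\infty(\R,|E_\nu(x)|^{-2}\,\dx)$ coincides with $L^\infty(\R)$. For $1\leq p<\infty$, I would combine the unit-circle Cauchy representation with Jensen's inequality for the probability measure $d\theta/(2\pi)$ and the convex function $|\cdot|^p$ to obtain the pointwise bound
$$|F'(x)|^p\leq\frac{1}{2\pi}\int_0^{2\pi}|F(x+e^{i\theta})|^p\,d\theta,$$
then multiply by $|E_\nu(x)|^{-2}$, integrate over $\R$, and apply Fubini. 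For each fixed $\theta$, the real substitution $u=x+\cos\theta$ rewrites the inner integral as $\int_\R |F(u+i\sin\theta)|^p|E_\nu(u-\cos\theta)|^{-2}\,\du$. Lemma \ref{Sec5_rel_facts}(i), together with continuity and strict positivity of $|E_\nu|^{-2}$ on $[-2,2]$, provides a uniform comparability $|E_\nu(u-c)|^{-2}\leq C_\nu|E_\nu(u)|^{-2}$ for $u\in\R$ and $|c|\leq 1$, so the estimate reduces cleanly to horizontal slices of $F$.

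The problem is then exactly a weighted Plancherel--P\'olya inequality: to find $C(\nu,p,\delta)$ such that
$$\int_\R|F(u+iy)|^p|E_\nu(u)|^{-2}\,\du\leq C(\nu,p,\delta)\int_\R|F(u)|^p|E_\nu(u)|^{-2}\,\du$$
for every $|y|\leq 1$. This is the main obstacle, since the standard Plancherel--P\'olya argument exploits translation invariance of Lebesgue measure and one cannot reduce to the unweighted case by multiplying $F$ by an analytic factor mimicking $(1+x^2)^{(2\nu+1)/(2p)}$ (which is not entire for generic $\nu,p$). My plan is to handle it via the subharmonicity of $|F|^p$: applying the submean inequality on disks of small radius centered at $u+iy$, together with Fubini and the same uniform translation comparability of the weight, one obtains a control of the form $\phi(y)\le C\int_{|t-y|\le r}\phi(t)\,\dt$, where $\phi(t):=\int_\R|F(s+it)|^p|E_\nu(s)|^{-2}\,\ds$. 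The function $\log\phi$ is convex in $t$ by a Hardy three-lines argument adapted to the fixed $s$-weight, and the classical (unweighted) Plancherel--P\'olya theorem applied to truncations of $F$ gives an a priori exponential upper bound $\phi(t)\ll e^{p\delta|t|}$; combined with the hypothesis $\phi(0)<\infty$ this forces $\phi(y)\leq C(\nu,p,\delta)\phi(0)$ for $|y|\leq 1$, closing the argument.
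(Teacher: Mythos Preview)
Your treatment of the exponential-type claim and the case $p=\infty$ is fine. The difficulty is entirely in the weighted $L^p$ bound for $1\le p<\infty$, and here your sketch has two genuine gaps.

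First, the assertion that $\log\phi(t)$ is convex, where $\phi(t)=\int_\R |F(s+it)|^p\,|E_\nu(s)|^{-2}\,\ds$, is not justified and is in fact false for general translation-comparable weights. Hardy's convexity theorem for $t\mapsto\log\int_\R|F(s+it)|^p\,\ds$ relies essentially on the translation invariance of Lebesgue measure; once a nonconstant weight is inserted the conclusion can fail. For instance, with $F(z)=1+z$, $p=2$ and $w(s)=(1+s^2)^{-2}$ (which certainly satisfies $w(s-c)\asymp w(s)$ for $|c|\le 1$), one computes $\phi(t)=C_1+C_2 t^2$ with $C_1,C_2>0$, and $\log\phi$ is strictly concave for large $|t|$. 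Nothing in your outline explains why the specific weight $|E_\nu|^{-2}$ should behave differently.

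Second, the claimed a priori bound $\phi(t)\ll e^{p\delta|t|}$ ``from classical Plancherel--P\'olya applied to truncations of $F$'' is left undefined. Unweighted Plancherel--P\'olya controls $\int_\R|F(s+it)|^p\,\ds$, not $\int_\R|F(s+it)|^p\,|E_\nu(s)|^{-2}\,\ds$, and neither restricting the range of integration nor modifying $F$ in an obvious way bridges that gap. Without this bound (or some independent proof that $\phi$ is finite off the real axis) the convexity argument, even if it were valid, cannot be started: a convex function may equal $+\infty$ everywhere except at one point. The submean inequality $\phi(y)\le C\int_{y-r}^{y+r}\phi(t)\,\dt$ you derive is correct but gives no information in that situation.

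It is worth noting that the obstruction you identify --- ``one cannot reduce to the unweighted case by multiplying $F$ by an analytic factor mimicking $(1+x^2)^{(2\nu+1)/(2p)}$'' --- is precisely what the paper overcomes. The paper chooses parameters $\alpha\in(0,1]$ and $k\in\Z^+$ so that the \emph{entire} function $G(z)=z^{2k}E_\alpha(z)E_\alpha^*(z)$, built from the homogeneous Hermite--Biehler functions themselves, satisfies $|G(x)|^p\asymp |x|^{2\nu+1}\asymp|E_\nu(x)|^{-2}$ for $|x|\ge 1$. Then $FG\in L^p(\R)$, classical Plancherel--P\'olya gives $(FG)'\in L^p(\R)$, and a direct computation using the differential equations \eqref{Intro_Diff_Eqs} shows $|G'(x)|\le c(1+|x|)^{2\theta}$ with $2\theta+1=(2\nu+1)/p$, which is exactly enough to conclude $F'G\in L^p(\R)$ and hence $F'\in L^p\big(\R,|E_\nu(x)|^{-2}\,\dx\big)$. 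So the route you dismissed is in fact the one that works.
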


\begin{proof} The case $p=\infty$ is just a restatement of the original result, so let us focus on the case $1\leq p <\infty$. Let us first prove that $F'$ has exponential type at most $\delta$.

\smallskip

If $2\nu +1 \geq 0$, by \eqref{Lem17_i} we find that $F \in L^p(\R)$, and then it follows that $F'$ has exponential type at most $\delta$. We now consider the case $-1 < 2 \nu +1 <0$. If $F$ has no zeros, then $F(z) = e^{az+b}$ with $|a| \leq \delta$, and $F'$ clearly has exponential type at most $\delta$ as well. If $F$ has zeros, let $\beta$ be a zero of $F$ and consider the function $Q(z) = F(z)/(z-\beta)$. From \eqref{Lem17_i} we have that $Q \in L^p(\R)$. Since $Q$ has exponential type at most $\delta$, we find that
$$Q'(z) = \frac{F'(z)(z-\beta) - F(z)}{(z-\beta)^2}$$
also has exponential type at most $\delta$, and therefore so does $F'$.

\smallskip

Now choose $\theta >-1$ such that $(2\nu +1)/p = 2\theta +1$. Let $\alpha = \lfloor \theta \rfloor -\theta+1$ and $k =  \lfloor \theta \rfloor + 2$. Define the entire function $G:\C \to \C$ by
$$G(z) = z^{2k} E_\alpha(z) E^*_\alpha(z).$$
From \eqref{Lem17_i} we have that
\begin{align}\label{asy-G}
c_\alpha |x|^{2\nu+1} \le G(x)^p\le d_\alpha |x|^{2\nu+1}
\end{align} 
for $x \in \R$ with $|x|\ge 1$ and constants $c_\alpha,d_\alpha>0$. From the differential equations \eqref{Intro_Diff_Eqs} we get
\begin{align*}
E_\alpha'(z) &= -i E_\alpha(z) + i (2\alpha+1) z^{-1}B_\alpha(z),\\
(E_\alpha^*)'(z) &= i E_\alpha^*(z) - i(2\alpha+1)z^{-1} B_\alpha(z).
\end{align*}
It follows that
\begin{align*}
E_\alpha(z) (E_\alpha^*)'(z) + E'_\alpha(z) E_\alpha^*(z)& = E_\alpha(z) \big\{ i E_\alpha^*(z) - i(2\alpha+1)z^{-1} B_\alpha(z)\big\} \\
& \qquad +E_\alpha^*(z)  \big\{-i E_\alpha(z) + i (2\alpha+1) z^{-1}B_\alpha(z)\big\} \\
& = -i(2\alpha+1)z^{-1} B_\alpha(z) \big\{E_\alpha(z)  - E_\alpha^*(z)\big\}\\
&= -(4\alpha+2)z^{-1} B_\alpha^2(z),
\end{align*}
and hence 
\begin{equation*}
G'(z) = 2k \,z^{2k-1} E_\alpha(z) E_\alpha^*(z) - (4\alpha+2)z^{2k-1} B_\alpha^2(z).
\smallskip
\end{equation*}
Since $|B_\alpha(x)|^2\le |E_\alpha(x)|^2$ we obtain 
\begin{align}\label{asy-gp}
|G'(x)|\le c\big(1+ |x|\big)^{2\theta}
\end{align}
for some $c>0$ and all $x \in \R$.

\smallskip

By hypothesis and \eqref{asy-G} we have that $FG\in L^p(\R)$. Since $FG$ is an entire function of exponential type, the theorem of Plancherel and P\'olya implies that $(FG)'\in L^p(\R)$. Using that
\begin{align*}
\big|F'(x)G(x)\big|^p & = \big|(FG)'(x) - F(x)G'(x)\big|^p \\
& \leq  2^p \left\{\big|(FG)'(x)|^p + \big|F(x)G'(x)\big|^p\right\},
\end{align*}
together with \eqref{asy-gp} we find that $F' G\in L^p(\R)$. By \eqref{Lem17_i} and  \eqref{asy-G} this is equivalent to
$$ F' \in L^p\big(\R, |E_{\nu}(x)|^{-2}\, \dx\big),$$
and the proof is complete.
\end{proof}

\subsection{Majorizing functions of bounded variation}

Given an arbitrary real number $\xi$, we define the real entire function $k_\xi:\C \to \C$ by 
\begin{equation}\label{Sec6_eq0_k}
k_\xi(z) = \frac{K_\nu(\xi,z)}{K_\nu(\xi,\xi)}.
\end{equation}
From \cite[Lemma 12]{HV} we see that $k_\xi$ has exponential type and $\tau(k_\xi) = \tau(E_{\nu}) = 1$. In \cite[Theorem 15]{HV} Holt and Vaaler constructed a real entire function $\ell_\xi:\C \to \C$ of exponential type at most $2$ such that 
\begin{align}\label{k-ell-ineq-new}
|\sgn(x-\xi) - \ell_\xi(x)| \le   k_\xi^2(x)
\end{align}
for all $x \in \R$. The following lemma briefly describes the asymptotic behavior of  $k_\xi(x)$ in both real variables $\xi$ and $x$.

\begin{lemma}
Let $\nu > -1$. For $\xi, x \in \R$, let $k_\xi(x)$ be defined as in \eqref{Sec6_eq0_k}. The following properties hold:
\begin{enumerate}
\item[(i)] For a fixed $\xi \in \R$ we have
\begin{equation}\label{Lem21_i}
\big|k_{\xi}^2(x)\big| \ll_{\xi, \nu} |x|^{-2\nu -3}
\end{equation}
as $|x| \to \infty$.

\smallskip

\item[(ii)] For a fixed $x \in \R$ we have
\begin{equation}\label{Lem21_ii}
\big|k_{\xi}^2(x)\big| \ll_{x,\nu} |\xi|^{2\nu -1}
\end{equation}
as $|\xi| \to \infty$. 
\end{enumerate}
\end{lemma}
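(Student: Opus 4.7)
\smallskip

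\noindent\textbf{Proof plan.} The starting point is the closed form for the reproducing kernel, which for real arguments $\xi \neq x$ reads
\begin{equation*}
\pi(x-\xi)\,K_\nu(\xi,x) = B_\nu(x)A_\nu(\xi) - A_\nu(x)B_\nu(\xi),
\end{equation*}
together with the already established diagonal formula \eqref{Intro_asymp1}. The relations \eqref{Intro_Bessel1}, \eqref{Intro_Bessel2} and the standard Bessel asymptotics \eqref{Asymptotic_Bessel_functions} immediately yield the pointwise bound
\begin{equation*}
|A_\nu(t)| + |B_\nu(t)| \ll_\nu |t|^{-\nu - 1/2} \qquad (|t| \to \infty),
\end{equation*}
since $A_\nu$ is even and $B_\nu$ is odd. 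These are the only analytic inputs I will need.

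\smallskip

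For part (i), fix $\xi \in \R$. Because $E_\nu$ has no real zeros, \cite[Lemma 11]{HV} gives $K_\nu(\xi,\xi) > 0$, so $K_\nu(\xi,\xi)$ is a positive constant depending on $\xi$ and $\nu$. Plugging the pointwise bound on $A_\nu(x), B_\nu(x)$ into the formula above, the numerator $B_\nu(x)A_\nu(\xi) - A_\nu(x)B_\nu(\xi)$ is $O_{\xi,\nu}(|x|^{-\nu-1/2})$; dividing by $\pi|x-\xi|$ gives $|K_\nu(\xi,x)| \ll_{\xi,\nu} |x|^{-\nu-3/2}$, and then \eqref{Lem21_i} follows upon squaring.

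\smallskip

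For part (ii), the roles reverse: we fix $x$ and let $|\xi| \to \infty$. The same pointwise bound applied to $A_\nu(\xi), B_\nu(\xi)$ gives $|B_\nu(x)A_\nu(\xi) - A_\nu(x)B_\nu(\xi)| \ll_{x,\nu} |\xi|^{-\nu-1/2}$, and dividing by $\pi|x-\xi|$ yields $|K_\nu(\xi,x)| \ll_{x,\nu} |\xi|^{-\nu-3/2}$, hence $K_\nu(\xi,x)^2 \ll_{x,\nu} |\xi|^{-2\nu-3}$. The main (mild) obstacle is now securing a matching \emph{lower} bound for the denominator $K_\nu(\xi,\xi)^2$. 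This is already encoded in \eqref{Intro_asymp1}--\eqref{Intro_asymp2}: the bracket $\xi J_\nu(\xi)^2 + \xi J_{\nu+1}(\xi)^2 - (2\nu+1) J_\nu(\xi)J_{\nu+1}(\xi)$ tends to $2/\pi > 0$, and therefore $K_\nu(\xi,\xi) \gg_\nu |\xi|^{-2\nu-1}$ for all $|\xi|$ sufficiently large. Combining the numerator and denominator estimates gives
\begin{equation*}
k_\xi(x)^2 = \frac{K_\nu(\xi,x)^2}{K_\nu(\xi,\xi)^2} \ll_{x,\nu} \frac{|\xi|^{-2\nu-3}}{|\xi|^{-4\nu-2}} = |\xi|^{2\nu-1},
\end{equation*}
which is \eqref{Lem21_ii}. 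The symmetry $A_\nu(-\xi)=A_\nu(\xi)$, $B_\nu(-\xi)=-B_\nu(\xi)$ reduces the two-sided limit $|\xi|\to\infty$ to the case $\xi\to+\infty$, so no additional work is required there.
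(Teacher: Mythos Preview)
Your proof is correct and takes essentially the same approach as the paper, which simply cites \eqref{Intro_Def_K_0}, \eqref{Intro_asymp1}, \eqref{Intro_asymp2} and \eqref{Lem17_i} without further detail. You have merely unpacked those references: your bound $|A_\nu(t)|+|B_\nu(t)|\ll_\nu |t|^{-\nu-1/2}$ is exactly what \eqref{Lem17_i} encodes (since $|E_\nu(x)|^2=A_\nu(x)^2+B_\nu(x)^2$ on $\R$), and your use of the $A,B$-form of the kernel is equivalent to the paper's citation of the $E,E^*$-form \eqref{Intro_Def_K_0}.
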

\begin{proof}
Both asymptotics follow from \eqref{Intro_Def_K_0},  \eqref{Intro_asymp1}, \eqref{Intro_asymp2} and \eqref{Lem17_i}.
\end{proof}

\smallskip

Let $f:\R \to \R$ be a right continuous function of locally bounded variation (i.e. $f$ has bounded variation on any compact interval $[a,b]\subset \R$). We denote by $V(f)|_{(a,b]}$ the total variation of $f$ on the interval $(a,b]$ and define the total variation function $V_f:\R \to \R$ by
\begin{align}\label{BV-def}
V_f(x) = \begin{cases}
V(f)|_{(0,x]}&\text{ if }x>0,\\
0&\text{ if }x=0,\\
-V(f)|_{(x,0]}&\text{ if }x<0.
\end{cases}
\end{align}
Observe that $V_f$ is a nondecreasing and right continuous function that is locally of bounded variation, and $f(x) = V_f(x)  - (V_f(x) - f(x))$ is a decomposition of $f$ as a difference of two nondecreasing right continuous functions. Associated to $V_f$ we have a unique Borel measure $\sigma_{V_f}$ on $\R$ such that $\sigma_{V_f}((a,b]) = V_f(b) - V_f(a)$ for all $a,b\in\R$, and associated to $f$ we have a signed Borel measure $\sigma_{f} = \sigma_{V_f} - \sigma_{(V_f - f)}$ on the compact subsets of $\R$ such that $\sigma_{f}((a,b]) = f(b) - f(a)$ (note that, in principle, $\sigma_f$ is not a signed Borel measure on the whole $\R$, for we may not be able to properly define $\sigma_f((-\infty, +\infty))$, for instance). The corresponding Lebesgue-Stieltjes integrals against these measures will be denoted by $\d V_f$ and $\d f$ below, and we slightly abuse the notation to write
\begin{equation*}
\int_{-\infty}^\infty g(x) \, \d f(x) := \lim_{M\to \infty} \int_{-M}^{M} g(x) \, \d f(x),
\end{equation*}
provided this limit actually exists. Our next result is inspired by \cite[Theorem 11 and Corollary 12]{V}.

\begin{theorem}\label{general-majorant} 
Let $\nu>-1$. Let $f:\RR\to\RR$ be locally of bounded variation and right continuous. Assume that its total variation function $V_f:\R \to \R$ satisfies
\begin{align}\label{variation-integrable}
\int_{-\infty}^\infty |E_\nu(\xi)|^{-2} \,\d V_f(\xi)<\infty.
\end{align}
Define, for $x \in \R$,
\begin{align}\label{defL}
L_1(x) := f(x) -\frac12 \int_{-\infty}^\infty \big\{\sgn(x-\xi) - \ell_\xi(x)\big\} \,\d f(\xi) -\frac12\int_{-\infty}^\infty k_\xi^2(x) \,\d V_f(\xi)
\end{align}
and
\begin{align}\label{defM}
M_1(x) := f(x) -\frac12 \int_{-\infty}^\infty \big\{\sgn(x-\xi) - \ell_\xi(x)\big\} \,\d f(\xi) + \frac12\int_{-\infty}^\infty k_\xi^2(x) \,\d V_f(\xi).
\end{align}
The following properties hold:
\begin{enumerate}
\item[(i)] The integrals \eqref{defL} and \eqref{defM} are absolutely convergent for all $x \in \R$.

\smallskip

\item[(ii)] There exists a pair of entire functions $L:\C \to \C$ and $M:\C \to \C$ of exponential type at most $2$ such that $L(x) = L_1(x)$ and $M(x) = M_1(x)$ for almost all $x \in \R$. 

\smallskip

\item[(iii)] The restrictions of $L$ and $M$ to $\R$ satisfy
\begin{equation*}
L(x) \le f(x) \leq M(x)
\end{equation*}
for all $x \in \R$ and
\begin{equation*}
\int_{-\infty}^\infty \big\{M(x) - L(x)\big\}\,|E_\nu(x)|^{-2}\,  \dx \le \int_{-\infty}^\infty \frac{1}{K_\nu(\xi,\xi)}\, \d V_f(\xi) <\infty.
\end{equation*}
\end{enumerate}
\end{theorem}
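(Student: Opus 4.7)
The plan is to prove the three parts in order, with the main obstacle in part (ii) being the construction of the entire extensions.

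For part (i), I use the pointwise bound \eqref{k-ell-ineq-new} to dominate the first integrand in \eqref{defL}-\eqref{defM} by $k_\xi^2(x)$, so since $|\d f|\le \d V_f$ it suffices to show $\int k_\xi^2(x)\,\d V_f(\xi)<\infty$ for each fixed $x$. Estimate \eqref{Lem21_ii} combined with \eqref{Lem17_i} yields $k_\xi^2(x)\ll_{x,\nu}|E_\nu(\xi)|^{-2}$ for $|\xi|$ large, and for $\xi$ in any bounded set both quantities are continuous while $|E_\nu|^{-2}$ stays away from zero (since $E_\nu$ has no real zeros), so $k_\xi^2(x)\le C(x,\nu)|E_\nu(\xi)|^{-2}$ globally on $\R$. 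Hypothesis \eqref{variation-integrable} closes the argument.

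For part (ii), I fix a continuity point $x_0\in\R$ of $f$ and define
\begin{align*}
L(z):= f(x_0)-\tfrac12\int_{-\infty}^\infty\bigl(\sgn(x_0-\xi)-\ell_\xi(z)\bigr)\,\d f(\xi)-\tfrac12\int_{-\infty}^\infty k_\xi^2(z)\,\d V_f(\xi),
\end{align*}
and $M(z)$ analogously with the opposite sign on the second integral. For each fixed $\xi$, both $\ell_\xi(z)$ and $k_\xi^2(z)$ are entire in $z$ of exponential type at most $2$ (by Holt--Vaaler's construction and by \cite[Lemma 12]{HV}, respectively). To conclude that $L$ is entire of exponential type at most $2$, I show absolute convergence uniform on every compact $K\subset\C$. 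For the second integral, the reproducing kernel formula combined with the Bessel asymptotics \eqref{Intro_asymp1}-\eqref{Intro_asymp2} yields $|k_\xi(z)|^2\le C_K|E_\nu(\xi)|^{-2}$ uniformly for $\xi\in\R$. For the first integral, combining $|\ell_\xi(t)|\le 1+k_\xi^2(t)$ on $\R$ with the reproducing-kernel Cauchy--Schwarz bound $k_\xi^2(t)\le K_\nu(t,t)/K_\nu(\xi,\xi)$ controls $\|\ell_\xi\|_{L^\infty(\R)}$ polynomially in $\xi$, and then the Phragm\'en--Lindel\"of principle on the upper and lower half planes extends the estimate to $K$, giving a majorant integrable against $\d V_f$ via \eqref{Lem17_i} and \eqref{variation-integrable}. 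Finally, the identity $\int \sgn(x-\xi)\,\d f(\xi)-\int\sgn(x_0-\xi)\,\d f(\xi)=2(f(x)-f(x_0))$, valid whenever $x,x_0$ are common continuity points of $f$, shows $L(x)=L_1(x)$ for a.e. $x\in\R$, with the analogous statement for $M$.

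For part (iii), applying \eqref{k-ell-ineq-new} and $|\d f|\le\d V_f$ directly to \eqref{defL} gives
\begin{align*}
f(x)-L_1(x)=\tfrac12\int_{-\infty}^\infty\bigl(\sgn(x-\xi)-\ell_\xi(x)\bigr)\d f(\xi)+\tfrac12\int_{-\infty}^\infty k_\xi^2(x)\,\d V_f(\xi)\ge 0,
\end{align*}
with the symmetric calculation giving $M_1\ge f$, and the corresponding inequalities for $L,M$ follow since they agree with $L_1,M_1$ a.e. Subtracting, $M_1(x)-L_1(x)=\int k_\xi^2(x)\,\d V_f(\xi)$, so Fubini--Tonelli combined with the reproducing property $\int|k_\xi(x)|^2|E_\nu(x)|^{-2}\,\dx=1/K_\nu(\xi,\xi)$ for $K_\nu(\xi,\cdot)\in\H(E_\nu)$ yields
\begin{align*}
\int_{-\infty}^\infty\bigl(M(x)-L(x)\bigr)|E_\nu(x)|^{-2}\,\dx=\int_{-\infty}^\infty\frac{\d V_f(\xi)}{K_\nu(\xi,\xi)},
\end{align*}
with finiteness following from $1/K_\nu(\xi,\xi)\ll|\xi|^{-1}|E_\nu(\xi)|^{-2}$ for large $|\xi|$ (by \eqref{Intro_asymp1}-\eqref{Intro_asymp2}) and \eqref{variation-integrable}. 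The hardest step is the uniform-in-$\xi$ control of $\ell_\xi(z)$ required in part (ii), since $\ell_\xi$ is specified only through \eqref{k-ell-ineq-new}; one must combine its approximate sign-normalization on $\R$ with Phragm\'en--Lindel\"of to obtain the estimate needed throughout $\C$.
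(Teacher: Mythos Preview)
Your treatment of parts (i) and (iii) is essentially the paper's argument and is fine, except that in (iii) you pass from ``$L_1\le f\le M_1$ for all $x$'' and ``$L=L_1$, $M=M_1$ a.e.'' to ``$L\le f\le M$ for all $x$'' without justification; you need the right continuity of $f$ (and continuity of $L,M$) to upgrade the a.e.\ inequality to a pointwise one, as the paper does.

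The real problem is part (ii). Your direct construction of $L(z)$ by substituting $z\in\C$ into the integrals has a genuine gap at the step you yourself flag as hardest. The argument ``$|\ell_\xi(t)|\le 1+k_\xi^2(t)$ and $k_\xi^2(t)\le K_\nu(t,t)/K_\nu(\xi,\xi)$ controls $\|\ell_\xi\|_{L^\infty(\R)}$ polynomially in $\xi$'' does not work: $K_\nu(t,t)\sim c|t|^{2\nu+1}$ is \emph{unbounded} in $t$, so this chain gives no $L^\infty(\R)$ bound on $\ell_\xi$ at all. Even if you produced such a bound by other means and pushed it to compacts by Phragm\'en--Lindel\"of, you would still need $\int_{-\infty}^{\infty}\bigl|\sgn(x_0-\xi)-\ell_\xi(z)\bigr|\,\d V_f(\xi)<\infty$ uniformly for $z$ in compacts. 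Splitting off the constant $\sgn(x_0-\xi)$ forces $\int \d V_f<\infty$, which is \emph{not} implied by \eqref{variation-integrable} when $-1<\nu<-\tfrac12$ (since then $|E_\nu(\xi)|^{-2}\sim|\xi|^{2\nu+1}\to 0$). To salvage the approach one would need the refined estimate $\sup_{z\in K}|\ell_\xi(z)-\sgn(-\xi)|\ll_K |E_\nu(\xi)|^{-2}$ for large $|\xi|$, which is far beyond what your sketch provides.

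The paper sidesteps all of this by never defining $L(z)$ as a $\xi$-integral of entire functions. Instead it works distributionally: it shows (using \eqref{k-ell-ineq-new}, \eqref{Lem21_i} and the Paley--Wiener theorem) that each $\ell_\xi$ has distributional Fourier transform supported in $[-\tfrac{1}{\pi},\tfrac{1}{\pi}]$; it proves from \eqref{variation-integrable} that $f$ has at most linear growth and hence is tempered; and then, testing against $\widehat{\theta}$ with $\theta$ vanishing on a neighbourhood of $[-\tfrac{1}{\pi},\tfrac{1}{\pi}]$ and integrating by parts, it shows that the Fourier transform of
\[
x\longmapsto f(x)-\tfrac12\int_{-\infty}^{\infty}\bigl\{\sgn(x-\xi)-\ell_\xi(x)\bigr\}\,\d f(\xi)
\]
is supported in $[-\tfrac{1}{\pi},\tfrac{1}{\pi}]$. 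The converse of the distributional Paley--Wiener theorem then yields the entire function of exponential type at most $2$. This route requires no uniform-in-$\xi$ control of $\ell_\xi(z)$ off the real axis and no finiteness of $\int \d V_f$.
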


\begin{proof} From \eqref{Lem17_i}, \eqref{Lem21_ii} and \eqref{variation-integrable} we see that  the second integral on the right-hand side of  \eqref{defL} (and \eqref{defM}) is well-defined for all $x \in \R$. The absolute convergence of the first integral on the right-hand side of \eqref{defL} (and \eqref{defM}) is then guaranteed by \eqref{k-ell-ineq-new}. This proves (i).

\smallskip

By construction and \eqref{k-ell-ineq-new} we have that 
\begin{equation*}
L_1(x) \leq f(x) \leq M_1(x)
\end{equation*}
for all $x \in \R$. If we show that $L_1$ and $M_1$ are almost everywhere equal to restrictions of entire functions $L$ and $M$ to $\R$, we will have
\begin{equation*}
L(x) \leq f(x) \leq M(x)
\end{equation*}
for all $x \in \R$, since $f$ is right continuous. Moreover, an application of Fubini's theorem gives us
\begin{align}\label{Sec5_comp_fub}
\begin{split}
\int_{-\infty}^\infty \big\{M(x) - L(x)\big\}\,|E_\nu(x)|^{-2}\,  \dx & = \int_{-\infty}^\infty \big\{M_1(x) - L_1(x)\big\}\,|E_\nu(x)|^{-2}\,  \dx \\
& = \int_{-\infty}^\infty \int_{-\infty}^\infty k_\xi^2(x) |E_{\nu}(x)|^{-2}  \dx \,\d V_f(\xi)\\
& = \int_{-\infty}^\infty \frac{1}{K_{\nu}(\xi,\xi)} \,\d V_f(\xi) <\infty,
\end{split}
\end{align}
where the last inequality follows from \eqref{Intro_asymp1}, \eqref{Intro_asymp2} and \eqref{Lem17_i}. This proves (iii).

\smallskip

We show next that $M_1$ is almost everywhere equal to the restriction of an entire function of exponential type $2$ to $\R$. The proof for $L_1$ will be analogous. From \eqref{k-ell-ineq-new} and \eqref{Lem21_i} we find that $\ell_\xi(x)$ defines a tempered distribution via 
\begin{equation*}
\phi \mapsto \int_\R \phi(x)  \,\ell_\xi(x) \,\dx
\end{equation*} 
for all Schwartz functions $\phi$. Let 
\begin{equation*}
g_{\xi}(z) = \frac{\ell_\xi(z) - \ell_\xi(0)}{z}.
\end{equation*}
 From \eqref{k-ell-ineq-new} and \eqref{Lem21_i} we see that $x \mapsto g_{\xi}(x) \in L^2(\R)$. Since $g_{\xi}$ has exponential type at most $2$, the Paley-Winer theorem implies that the Fourier transform $\widehat{g_{\xi}}$ is supported on $\big[-\tfrac{1}{\pi},\tfrac{1}{\pi} \big]$ and 
 \begin{equation*}
g_{\xi}(z) = \int_{-\tfrac{1}{\pi}}^{\tfrac{1}{\pi}} \widehat{g_{\xi}}(t)\,e^{2\pi i z t }\,\dt,
\end{equation*}
from which we get the bound
\begin{equation*}
|g_{\xi}(z)| \leq C_{\xi}\,e^{2|\im(z)|},
\end{equation*}
and consequently
\begin{equation}\label{Sec5_bound_PWTD}
|\ell_{\xi}(z)| \leq C'_{\xi}\,(1 + |z|)\,e^{2|\im(z)|},
\end{equation}
for all $z \in \C$, where $C_{\xi}$ and $C'_{\xi}$ are positive constants that may depend on $\xi$. With the bound \eqref{Sec5_bound_PWTD} we can use the Paley-Wiener theorem for distributions \cite[Theorem 1.7.7]{Hor} to conclude that $\widehat{\ell_{\xi}}$ is a tempered distribution supported on $\big[-\tfrac{1}{\pi},\tfrac{1}{\pi} \big]$. Therefore, if $\varepsilon>0$ and $\theta$ is a Schwartz function which is equal to zero on $\big[-\tfrac{1}{\pi}-\varepsilon,\tfrac{1}{\pi} + \varepsilon\big]$, we have 
\begin{align}\label{ell-zero-int}
\int_{-\infty}^\infty \ell_\xi (x) \,\widehat{\theta}(x) \,\dx =0
\end{align}
for all $\xi \in \R$. Let $\varphi(t) := -(2\pi it )^{-1} \theta(t)$. Then $\varphi$ is also a Schwartz function and it satisfies $\big(\widehat{\varphi}\big)' =\widehat{\theta}$. Define $D_\xi:\R \to \R$ by
\begin{equation*}
D_\xi(x) = \tfrac{1}{2}\, \big\{\sgn(x-\xi) - \ell_\xi(x)\big\}.
\end{equation*}
Since $x\mapsto \tfrac12 \sgn(x)$ has distributional Fourier transform $t\mapsto (2\pi i t)^{-1}$ for nonzero $t$, from \eqref{ell-zero-int} we obtain
\begin{align*}
-\widehat{\varphi}(\xi) = \int_{-\infty}^\infty \frac{e^{-2\pi i t \xi}}{2\pi i t}\, \theta(t)\, \dt = \frac{1}{2} \int_{-\infty}^\infty \sgn(x-\xi)\, \widehat{\theta}(x) \,\dx = \int_{-\infty}^\infty  D_\xi(x) \,\widehat{\theta}(x)\, \dx.
\end{align*}
We now claim that $f$ has at most linear growth and thus defines a tempered distribution. In fact, this plainly follows from \eqref{Lem17_i} and \eqref{variation-integrable} since
\begin{equation*}
|f(x) - f(1)| \leq \int_{1-}^{x+} \d V_f(\xi) = \int_{1-}^{x+} |\xi|^{-2\nu -1}\,|\xi|^{2\nu +1} \d V_f(\xi) \leq  \int_{1-}^{x+} |\xi|\,|\xi|^{2\nu +1} \d V_f(\xi) \ll |x|,
\end{equation*}
for $x >1$. The same bound holds for $x<-1$. An integration by parts then gives
\begin{align}\label{Sec5_TD_1}
\begin{split}
\int_{-\infty}^\infty  \widehat{\theta}(x) f(x)  \,\dx &= -\int_{-\infty}^\infty \widehat{\varphi}(\xi) \,\d f(\xi)\\
&=\int_{-\infty}^\infty \int_{-\infty}^\infty  D_\xi(x)\, \widehat{\theta}(x) \,\dx \, \d f(\xi)\\
&= \int_{-\infty}^\infty \widehat{\theta}(x) \int_{-\infty}^\infty D_\xi(x)\, \d f(\xi)\, \dx,
\end{split}
\end{align}
where the interchange of the integrals is justified by the absolute convergence given by \eqref{k-ell-ineq-new} and \eqref{Sec5_comp_fub}. Note that $x \mapsto \int_{-\infty}^\infty D_\xi(x)\, \d f(\xi)$ belongs to $L^1(\R, |E_{\nu}(x)|^{-2}\,\dx)$  by \eqref{k-ell-ineq-new} and \eqref{Sec5_comp_fub}, and thus it defines a tempered distribution. From \eqref{Sec5_TD_1} it follows that the distributional Fourier transform of
\[
x\mapsto f(x) -   \int_{-\infty}^\infty D_\xi(x)\, \d f(\xi)
\]
is supported on $\big[-\tfrac{1}{\pi},\tfrac{1}{\pi}\big]$. The converse of the Paley-Wiener theorem for distributions \cite[Theorem 1.7.7]{Hor} implies that this difference is almost everywhere equal to the restriction of an entire function of exponential type $2$ to the real line. 

\smallskip

The same argument can be used to show that the second integral in \eqref{defM}, when viewed as a function of $x$, has Fourier transform supported on $\big[-\tfrac{1}{\pi},\tfrac{1}{\pi}\big]$. In fact, from \eqref{Lem21_i} we already have that $k_{\xi}^2 \in L^2(\R)$. The remaining steps are analogous.
\end{proof}

We now come to the result that originally motivated this section.

\begin{theorem}\label{Cor22}
Let $\nu>-1$ and $G:\C \to \C$ be a real entire function of exponential type at most $2$ such that $G\in L^1(\R, |E_\nu(x)|^{-2}\, \dx)$. Then there exist $U,V\in\H(E_{\nu})$ such that 
\begin{align*}
G(z) = U(z)U^*(z) - V(z)V^*(z)
\end{align*}
for all $z \in \C$.
\end{theorem}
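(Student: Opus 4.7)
\textbf{Proof plan for Theorem \ref{Cor22}.} The plan is to reduce the claim to Lemma \ref{nonnegl1-tohe} by producing a nonnegative entire majorant of $|G|$ of exponential type at most $2$ lying in $L^1(\R,|E_\nu(x)|^{-2}\,\dx)$. The bridge between $G$ and such a majorant is Theorem \ref{PP-in-HE}, which ensures that $G'$ shares the integrability of $G$, and Theorem \ref{general-majorant}, which converts that integrability into a two-sided approximation.

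First I would apply Theorem \ref{PP-in-HE} with $p=1$: since $G$ has exponential type at most $2$ and lies in $L^1(\R,|E_\nu(x)|^{-2}\,\dx)$, its derivative $G'$ enjoys the same two properties, and in particular
\[
\int_{-\infty}^{\infty}|G'(\xi)|\,|E_\nu(\xi)|^{-2}\,\d\xi<\infty.
\]
Assuming $G\not\equiv 0$ (otherwise take $U=V=0$), the real-analytic function $G$ has isolated real zeros, so $|G|:\R\to\R$ is continuous, locally absolutely continuous, and its total variation function from \eqref{BV-def} satisfies $\d V_{|G|}(\xi)=|G'(\xi)|\,\d\xi$. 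This is precisely hypothesis \eqref{variation-integrable} of Theorem \ref{general-majorant} with $f=|G|$; the theorem then produces entire functions $L$ and $M$ of exponential type at most $2$ with $L(x)\le|G(x)|\le M(x)$ on $\R$ and
\[
\int_{-\infty}^{\infty}\{M(x)-L(x)\}\,|E_\nu(x)|^{-2}\,\dx<\infty.
\]
Combining $M\le|G|+(M-L)$ pointwise with the integrability of $|G|$ shows that $M\in L^1(\R,|E_\nu(x)|^{-2}\,\dx)$, and clearly $M\ge|G|\ge 0$ on $\R$.

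The two entire functions $M+G$ and $M-G$ then each have exponential type at most $2$, are nonnegative on $\R$, and lie in $L^1(\R,|E_\nu(x)|^{-2}\,\dx)$. Lemma \ref{nonnegl1-tohe}, applied to each, yields $U_1,V_1\in\H(E_\nu)$ with $M+G=U_1 U_1^{*}$ and $M-G=V_1 V_1^{*}$ on $\C$. Subtracting and rescaling by $U=U_1/\sqrt{2}$, $V=V_1/\sqrt{2}$ gives the desired identity $G=UU^{*}-VV^{*}$ on $\C$, with $U,V\in\H(E_\nu)$. The most delicate step is the verification of the variation hypothesis for $|G|$, which is where Theorem \ref{PP-in-HE} plays its decisive role; once that is in place, the remainder is a straightforward assembly of the earlier tools.
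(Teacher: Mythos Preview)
Your proof is correct and follows essentially the same route as the paper. The paper uses $f=\max\{G,0\}$ in place of your $f=|G|$ and then factors $M$ and $M-G$ rather than $M+G$ and $M-G$, but these are cosmetic variations within the same three-step scheme: Theorem~\ref{PP-in-HE} forces $G'\in L^1(\R,|E_\nu(x)|^{-2}\,\dx)$, Theorem~\ref{general-majorant} then supplies an integrable nonnegative majorant of exponential type at most $2$, and two applications of Lemma~\ref{nonnegl1-tohe} yield the $UU^*-VV^*$ decomposition.
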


\begin{proof} We define a continuous function $f:\R\to\R$ by
\begin{equation*}
f(x) = \max\{G(x),0\}.
\end{equation*}
Theorem \ref{PP-in-HE} implies that $G' \in L^1(\R, |E_{\nu}(x)|^{-2}\, \dx)$, hence $f$ is locally of bounded variation and the total variation function $V_f$ defined in \eqref{BV-def} satisfies $|E_\nu|^{-2}\in L^1(\R, \d V_f(x))$. Since $f\in L^1(\R,|E_\nu(x)|^{-2} \,\dx)$, by Theorem \ref{general-majorant} there exists an entire function $M$ of exponential type at most $2$ such that $M(x)\ge f(x)$ for all real $x$ and $M\in L^1(\R, |E_{\nu}(x)|^{-2}\, \dx)$. Since $f$ is nonnegative, so is $M$, and it follows from Lemma \ref{nonnegl1-tohe} that there exists $U\in \H(E_\nu)$ such that $M = UU^*$. By construction, $M-G \ge M-f\ge 0$, and another application of Lemma \ref{nonnegl1-tohe} gives $M-G = VV^*$ with $V\in \H(E_\nu)$. It follows that $G = UU^* - VV^*$ as claimed.
\end{proof}


\section{The extremal problem for a class of radial functions}\label{Sec_Radial_Functions}

In this section we shall prove Theorem \ref{Thm2}.

\subsection{Preliminaries} We start by showing that the asymptotic estimate \eqref{Intro_open_asymp} holds, a point left open in the introduction. In fact, this plainly follows from Proposition \ref{Prop10} specialized to the spaces $\H(E_{\nu})$, together with \eqref{Lem17_i}, \eqref{Sec4_eq0_def_L_nu} and \eqref{Sec4_eq0_def_M_nu}.

\smallskip

If the nonnegative Borel measure $\mu$ on $(0,\infty)$ satisfies \eqref{Intro_mu_1} we define
\begin{equation}\label{Sec6_eq1_int1}
D_{\nu}^-(\mu,{\bf x}) = \int_0^{\infty} \Big\{e^{-\lambda |{\bf x}|} - \mc{L}_{\nu}(2,\lambda, \x)\Big\} \, \dmu,
\end{equation}
and if $\mu$ satisfies the more restrictive condition \eqref{Intro_mu_2} we define
\begin{equation}\label{Sec6_eq1_int2}
D_{\nu}^+(\mu,{\bf x}) = \int_0^{\infty} \Big\{\mc{M}_{\nu}(2, \lambda, \x) - e^{-\lambda |{\bf x}|}\Big\} \, \dmu,
\end{equation}
where $\z \mapsto  \mc{L}_{\nu}(2, \lambda, \z)$ and $\z \mapsto  \mc{M}_{\nu}(2, \lambda, \z)$ are the extremal functions of exponential type at most $2$ defined in Theorem \ref{Thm1} (iv). We know a priori that the nonnegative functions $\x \mapsto D_{\nu}^{\pm}(\mu,{\bf x})$ are radial and belong to $L^1(\R^N, |\x|^{2\nu +2 - N}\,\dxx)$, hence the integrals \eqref{Sec6_eq1_int1} and \eqref{Sec6_eq1_int2} converge almost everywhere.

\begin{lemma}\label{Sec6_Lem23}
The following properties hold:
\smallskip
\begin{enumerate}
\item[(i)] If $\mu$ satisfies \eqref{Intro_mu_1} then the nonnegative function $\x \mapsto D_{\nu}^{-}(\mu,{\bf x})$ is continuous for $\x \neq 0$. Moreover, $D_{\nu}^{-}(\mu,{\bf x}) =0 $ for $|{\bf x}| = \xi$ with $A_{\nu}(\xi)=0$.
\smallskip
\item[(ii)] If $\mu$ satisfies \eqref{Intro_mu_2} then the nonnegative function $\x \mapsto D_{\nu}^{+}(\mu,{\bf x})$ is continuous. Moreover, $D_{\nu}^{+}(\mu,{\bf x}) =0 $ for $|{\bf x}| = \xi$ with $B_{\nu}(\xi)=0$.
\end{enumerate}
\end{lemma}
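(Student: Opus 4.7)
Nonnegativity of $D_{\nu}^{\pm}(\mu,\x)$ is immediate, since $\mc{L}_{\nu}(2,\lambda,\x) \le e^{-\lambda|\x|} \le \mc{M}_{\nu}(2,\lambda,\x)$ for every $\lambda > 0$. Because $L(A_\nu^2,\lambda,\cdot)$ and $M(B_\nu^2,\lambda,\cdot)$ are even entire functions, one has $\mc{L}_{\nu}(2,\lambda,\x)=L(A_\nu^2,\lambda,|\x|)$ and $\mc{M}_{\nu}(2,\lambda,\x)=M(B_\nu^2,\lambda,|\x|)$, so the interpolation identities \eqref{minorant-interpolation} and \eqref{majorant-interpolation} yield the vanishing statements: whenever $|\x|=\xi$ with $A_\nu(\xi)=0$ (resp.\ $B_\nu(\xi)=0$), the integrand in \eqref{Sec6_eq1_int1} (resp.\ \eqref{Sec6_eq1_int2}) is zero for every $\lambda>0$, hence $D_{\nu}^{\pm}(\mu,\x) = 0$. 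The case $\x = 0$ in part (ii) is covered by the simple zero of $B_\nu$ at the origin.

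For continuity I will apply the dominated convergence theorem. Fix $\x_0$ as in the statement, and choose a closed neighborhood $U$ of $\x_0$; for part (i), I further require $0 \notin U$, so that $|\x| \in [r_1,r_2]$ with $0 < r_1 \le r_2$ for $\x \in U$. For each fixed $\lambda > 0$, the integrands $e^{-\lambda|\x|} - \mc{L}_{\nu}(2,\lambda,\x)$ and $\mc{M}_{\nu}(2,\lambda,\x) - e^{-\lambda|\x|}$ are continuous in $\x \in U$ since $L(A_\nu^2,\lambda,\cdot)$ and $M(B_\nu^2,\lambda,\cdot)$ are entire. It remains to produce a $\mu$-integrable dominating function $h_U$ on $U$, which I construct separately on $(0,1]$ and $[1,\infty)$.

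For $\lambda \in (0,1]$ the bound comes directly from Proposition \ref{Prop10}: the factor $A_\nu(|\x|)^2/(1+|\x|^2)$ is bounded on $U$, and, noting $B_\nu(r)^2/r^2 \to 1/(4(\nu+1)^2)$ as $r \to 0$, so is $B_\nu(|\x|)^2/|\x|^2$ (including at $\x=0$ in case (ii)). This gives a domination by $2C_U\lambda$ on $U \times (0,1]$, which is $\mu$-integrable there because $\lambda/(1+\lambda^{2\nu+3}) \ge \lambda/2$ and $\lambda/(1+\lambda) \ge \lambda/2$ on $(0,1]$.

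The main obstacle lies in the large-$\lambda$ regime, where Proposition \ref{Prop10} yields only an $O(\lambda^2)$ bound that is not integrable against $\mu$. I overcome this by returning to the Laplace representations \eqref{al-def-1}--\eqref{al-def-2} of $\mc{A}(F,\lambda,z)$. After the substitution $u = w-\lambda$, the exponential decay of $g$ at $-\infty$ provided by Lemma \ref{growth in LP} (namely $|g(u)| \le q_0(u)\,e^{\tau_2 u}$ as $u \to -\infty$, with $\tau_2$ the positive endpoint of the strip of analyticity of $1/F$) yields bounds of the form $|\mc{A}(F,\lambda,z)| \le C_U P(\lambda)\,e^{-c\lambda}$, uniformly for $z$ real and bounded, with some polynomial $P$ and constant $c > 0$. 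In case (i), with $F = A_\nu^2$, this gives $|\mc{L}_{\nu}(2,\lambda,\x)| \le C_U P(\lambda) e^{-c\lambda}$ on $U$, so the integrand in \eqref{Sec6_eq1_int1} is bounded on $U$ by $e^{-\lambda r_1} + C_U P(\lambda) e^{-c\lambda}$; this is dominated by $C \lambda^{-(2\nu+2)}$ for large $\lambda$, hence $\mu$-integrable on $[1,\infty)$ by \eqref{Intro_mu_1}. In case (ii), with $F = B_\nu^2$, the same estimate controls the $\mc{A}$-terms in the decomposition of $M(B_\nu^2,\lambda,z)$ from Proposition \ref{Sec_ILP_Prop5}, while the remaining $\lambda$-independent summand $2g'(0) B_\nu(z)^2/z^2$ is continuous and therefore bounded on $U$; consequently $\mc{M}_{\nu}(2,\lambda,\x)$ is bounded uniformly on $U \times [1,\infty)$, and the bound $|\mc{M}_{\nu}(2,\lambda,\x) - e^{-\lambda|\x|}| \le C_U$ on that set is $\mu$-integrable precisely because \eqref{Intro_mu_2} forces $\mu([1,\infty)) < \infty$. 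Dominated convergence then yields the continuity statements.
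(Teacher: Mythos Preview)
Your proof is correct and follows the same overall architecture as the paper's: reduce to the radial variable, read off nonnegativity and the interpolation zeros from \eqref{minorant-interpolation}/\eqref{majorant-interpolation}, and establish continuity via dominated convergence in the $\lambda$-integral. The difference lies in how the dominating function is produced.

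The paper exploits a monotonicity trick. For part (i) it writes $e^{-\lambda|x|}-L(A_\nu^2,\lambda,x)=A_\nu^2(x)\,H_\lambda^-(x)$ with $H_\lambda^-(x)=\int_{-\infty}^0\{g(w+\lambda)-g(w-\lambda)\}e^{-xw}\,\dw$, and observes that since the integrand is nonnegative and $e^{-xw}$ is increasing in $x$ for $w\le 0$, each $H_\lambda^-$ is nondecreasing on $(-\infty,0]$. Thus, near any $x_0<0$ one dominates by the single function $\lambda\mapsto H_\lambda^-(x_0')$ for a nearby $x_0'\in(x_0,0)$; its $\mu$-integrability comes for free from the a~priori fact that $D_\nu^-(\mu,\cdot)\in L^1(\R^N,|\x|^{2\nu+2-N}\dxx)$ (hence finite a.e.). Part (ii) is handled the same way via the representation \eqref{Ml-interpol-at0}, now using that the corresponding $H_\lambda^+$ is nonincreasing and bounded above by $2\{g'(0)-g'(-\lambda)\}$.

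Your route is instead quantitative: you split $\lambda\in(0,1]$ versus $\lambda\ge 1$, invoke Proposition~\ref{Prop10} for small $\lambda$, and for large $\lambda$ go back to the Laplace representations \eqref{al-def-1}--\eqref{al-def-2} together with Lemma~\ref{growth in LP} to show that the $\mc A$-terms obey $|\mc A(F,\lambda,x)|\le C_U P(\lambda)e^{-c\lambda}$ uniformly on the chosen neighborhood (a careful case split between $\mc A_1$ and $\mc A_2$ is needed depending on the sign and size of $x$, and in case (ii) one must also use the polynomial growth of $g$ at $+\infty$ when $F=B_\nu^2$, but the conclusion stands). This approach is more laborious but has the advantage of being self-contained: it does not appeal to the a~priori $L^1$ finiteness of $D_\nu^\pm$, and it makes transparent exactly why the conditions \eqref{Intro_mu_1} and \eqref{Intro_mu_2} are the right ones for the respective cases. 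The paper's monotonicity argument is slicker and sidesteps all of the large-$\lambda$ analysis.
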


\begin{proof} 
\noindent{\it Part} (i). Recall that $A_{\nu}(0)\neq 0$. Let $g = g_0$ in \eqref{gc-def} with $F = A_{\nu}^2$. Since $A_{\nu}^2$ is even we have that $g$ is even. From \eqref{ml-el-rep} we have
\begin{align*}
e^{-\lambda|x|} - L(A_{\nu}^2, \lambda,x) =  A_{\nu}^2(x) \int_{-\infty}^0 \big\{g(w+\lambda) - g(w-\lambda)\big\} \,e^{-xw} \,\dw,
\end{align*}
for $x\leq0$. Recall that for $w<0$,
\begin{equation*}
g(w+\lambda) - g(w-\lambda)\ge 0.
\end{equation*}
It follows that $H_{\lambda}^{-}$ defined by
\begin{equation*}
H_{\lambda}^{-}(x) =\int_{-\infty}^0 \big\{g(w+\lambda) - g(w-\lambda)\big\} \,e^{-xw} \,\dw
\end{equation*}
is nondecreasing on $(-\infty,0]$ for every $\lambda>0$. For $x\leq0$ we define
\begin{equation*}
d_\nu^-(\mu,x) := A_{\nu}^2(x) \int_0^\infty H_{\lambda}^{-}(x)\, \dmu,
\end{equation*}
and note that this function is continuous on $(-\infty, 0)$ by dominated convergence (at $x=0$ it may blow up). It follows that $d_\nu^-(\mu,\xi)=0$ for every $\xi<0$ with $A_{\nu}(\xi)=0$. Since $D_\nu^-(\mu,\x) = d_\nu^-(\mu,-|\x|)$, our claim follows.

\smallskip

\noindent{\it Part} (ii). Recall that $B_{\nu}^2$ has a double zero at the origin. From \eqref{Ml-interpol-at0} we have the representation
\begin{align*}
& M(B_{\nu}^2,\lambda,x) - e^{-\lambda|x|}\\[0.2em]
& = \frac{B_{\nu}^2(x)}{x^2}\left\{ 2\big(g'(0) - g'(-\lambda)\big) + \int_0^\infty \big\{g''(-w-\lambda) - g''(w-\lambda)\big\}\,e^{xw}\, \dw\right\},
\end{align*}
valid for $x\leq0$, where $g = g_{\alpha_{F}/2}$ in \eqref{gc-def} with $F = B_{\nu}^2$. Since $B_{\nu}^2$ is even it follows that $g''$ is even. Hence, for $w >0$,
\begin{equation*}
g''(-w-\lambda) = g''(w+\lambda) \le g''(w-\lambda)\,,
\end{equation*}
as we have seen in the proof of Proposition \ref{Sec_ILP_Prop5}. This implies that $H_\lambda^+$ defined by
\begin{equation*}
H_\lambda^+(x) = 2\big(g'(0) - g'(-\lambda)\big) + \int_0^\infty \big\{g''(-w-\lambda) - g''(w-\lambda)\big\}\,e^{xw}\, \dw
\end{equation*}
is nonincreasing on $(-\infty,0]$ and satisfies
\begin{equation*}
0\le H_\lambda^+(x) \le 2\big(g'(0) - g'(-\lambda)\big).
\end{equation*}
For $x\leq0$ we define
\begin{equation*}
d_\nu^+(\mu,x) := \frac{B_{\nu}^2(x)}{x^2} \int_0^\infty H_\lambda^+(x)\, \dmu,
\end{equation*}
and note that this function is continuous on $(-\infty, 0]$ by dominated convergence. It follows that $d_\nu^+(\mu,\xi)=0$ for every $\xi <0$ with $B_{\nu}(\xi)=0$. Since $H_\lambda^+(0) =0$ by the proof of \eqref{majorant-interpolation}, we also find that $d_\nu^+(\mu,0)=0$. Since $D_\nu^+(\mu,\x) = d_\nu^+(\mu,-|\x|)$, our claim follows.
\end{proof}

\subsection{Proof of Theorem \ref{Thm2}} 

\subsubsection{Proof of part {\rm (i)}} For $\kappa >0$, recall the definition of the measure $\mu_{\kappa}$ given in \eqref{def_mu_kappa}. We observe that
\begin{align*}
\mc{G}_{\mu}(\kappa\x) & = \int_0^{\infty}\left\{ e^{-\lambda \kappa|\x|} - e^{-\lambda}\right\}\dmu\\[0.2em]
& = \int_0^{\infty}\left\{ e^{-\lambda|\x|} - e^{-\lambda \kappa^{-1}}\right\}\d \mu_{\kappa^{-1}}(\lambda)\\[0.2em]
& =  \int_0^{\infty}\left\{ e^{-\lambda|\x|} - e^{-\lambda}\right\}\d \mu_{\kappa^{-1}}(\lambda) -  \int_0^{\infty}\left\{ e^{-\lambda \kappa^{-1}} - e^{-\lambda}\right\}\d \mu_{\kappa^{-1}}(\lambda)\\[0.2em]
& = \mc{G}_{\mu_{\kappa^{-1}}}(\x) - \mc{G}_{\mu_{\kappa^{-1}}}\big(\kappa^{-1}\,{\bf e_1}\big),
\end{align*}
where ${\bf e_1} = (1,0,0,\ldots,0) \in \R^N$. We conclude that $\z \mapsto  \mc{L}(\z) \in \E_{\delta}^{N-} (\mc{G}_{\mu})$ if and only if $\z \mapsto  \big\{\mc{L}(\kappa\z) + \mc{G}_{\mu_{\kappa^{-1}}}\big(\kappa^{-1}\,{\bf e_1}\big)\big\}  \in \E_{\kappa\delta}^{N-} \left(\mc{G}_{\mu_{\kappa^{-1}}}\right)$. An analogous property holds for the majorants. A simple change of variables then establishes (i).

\subsubsection{Proof of parts {\rm (ii) and (iii)}} We shall prove these two parts together. By part (i) it suffices to consider the case $\delta = 2$.

\subsubsection*{Existence} Given $\nu >-1$ we shall prove the existence of suitable minorants that interpolate $\mc{G}_{\mu}$ at the ``correct" radii. The construction for the majorants will be analogous. 

\smallskip

The strategy to prove this part is as follows. We would like to argue that $\x \mapsto \G_{\mu}(\x)$ and $\x \mapsto D_{\nu}^-(\mu,{\bf x})$ are tempered distributions whose Fourier transforms satisfy
\begin{equation*}
\widehat{\G_{\mu}}(\t) = \widehat{D_{\nu}^-}(\mu,{\bf t})
\end{equation*}
for $|\t| \geq \pi^{-1}$. We would then invoke the Paley-Wiener theorem for distributions to see that $\x \mapsto \big(\G_{\mu}(\x) - D_{\nu}^-(\mu,{\bf x})\big)$ is equal almost everywhere to the restriction of an entire function of exponential type at most $2$ to $\R^N$. This would be our desired minorant.

\smallskip

However, given the parameter $\nu >-1$, it is not clear {\it a priori} that  $\x \mapsto \G_{\mu}(\x)$ and $\x \mapsto D_{\nu}^-(\mu,{\bf x})$ define tempered distributions on $\R^N$ for any $N \geq 1$. We will overcome this technical difficulty by treating first the convenient case $N = \lfloor 2\nu +4\rfloor$ to then extend the construction to general $N$. We recall that the nonnegative Borel measure $\mu$ on $(0,\infty)$ satisfies \eqref{Intro_mu_1} for the minorant problem and \eqref{Intro_mu_2} for the majorant problem.

\smallskip

\noindent{\it Step 1}. The case $N = \lfloor 2\nu +4\rfloor$. 

\smallskip

We claim that our function $\G_{\mu}$ defines a tempered distribution on the class of Schwartz functions on $\R^N$. In fact, for $|\x| \geq 1$, we can use the mean value theorem to get
$$\left|e^{-\lambda |\x|} - e^{-\lambda} \right| \leq \lambda e^{-\lambda} (|\x| -1),$$ 
and then
\begin{align}\label{Sec6_pf_thm3_eq0}
\begin{split}
\G_{\mu}(\x) & = \int_0^{\infty}\left\{ e^{-\lambda |\x|} - e^{-\lambda}\right\}\dmu\\
& \leq (|\x| -1)  \int_0^{\infty}  \lambda e^{-\lambda} \dmu \\
& \ll_{\mu} (|\x| -1). 
\end{split}
\end{align}
\smallskip
Let $B(r) \subset \R^N$ denote the open ball of radius $r$ centered at the origin. We show that $\G_{\mu}$ is integrable in $B(1)$. For $r>0$ we use the two elementary inequalities
$$\left|e^{-\lambda r} -e^{-\lambda}\right| \leq \lambda |1-r|$$
and
$$e^{-\lambda r}\,r^{N-1} \ll_N\ \frac{1}{\lambda^{N-1}}$$
in the following computation
\begin{align}\label{Sec6_pf_thm3_eq1}
\begin{split}
\int_{B(1)} \G_{\mu}(\x) \,\dxx &= \int_{B(1)} \int_0^{\infty}\left\{ e^{-\lambda |\x|} - e^{-\lambda}\right\} \dmu\,\dxx \\
& = \omega_{N-1} \int_0^{\infty} \int_{0}^1 \Big\{e^{-\lambda r} - e^{-\lambda}\Big\}\,r^{N-1}\,\dr\, \dmu\\
& \ll_N  \ \omega_{N-1} \left[ \int_0^{1} \lambda \int_{0}^1 (1-r)\,r^{N-1}\,\dr\, \dmu\right.\\
&  \ \ \ \ \ \ \ \ \  \ \ \ \ \ \ \ \ \ \ \ \ \left. + \int_1^{\infty} \int_{0}^1 \left(\frac{1}{\lambda^{N-1}} + e^{-\lambda} r^{N-1}\right)\dr\,\dmu\right]\\
& \ll_{N,\mu}\  1.
\end{split}
\end{align}
To estimate the last integral we used the fact that $N \geq 2 \nu +3$. The bounds \eqref{Sec6_pf_thm3_eq0} and \eqref{Sec6_pf_thm3_eq1} imply our claim. 

\smallskip

Next we recall that the function $\x \mapsto D_{\nu}^-(\mu,{\bf x})$ defined in \eqref{Sec6_eq1_int1} belongs to $L^1(\R^N, |\x|^{2\nu +2 - N}\,\dxx)$. In particular this function is integrable in $B(1)$ and also defines a tempered distribution.

\smallskip

From \eqref{Sec4_eq0_def_L_nu} we know that 
\begin{equation*}
\L_{\nu}(2,\lambda, \z) := \psi_N \big(L(A_{\nu}^2, \lambda, \cdot)\big)(\z)
\end{equation*}
and the even function $z \mapsto L(A_{\nu}^2, \lambda, z)$ has exponential type at most $2$ and its restriction to $\R$ belongs to $L^1(\R, |x|^{2\nu +1}\,\dx)$. We claim that $z \mapsto L(A_{\nu}^2, \lambda, z)$ must have a zero. Otherwise, it would have to be of the form $e^{az +b}$ with $|a| \leq 2$, and since it is an even function, we would have $a=0$ and the function would be constant. However, this contradicts the fact that its restriction to $\R$ belongs to $L^1(\R, |x|^{2\nu +1}\,\dx)$. If $a$ is a zero of $z \mapsto L(A_{\nu}^2, \lambda, z)$, so is $-a$ and the function
$$J(z) = \frac{L(A_{\nu}^2, \lambda, z)}{(z^2-a^2)}$$
is even and real entire (if $a=0$ it must be a zero of even multiplicity). Therefore the function of $N$ complex variables
$$\mc{J}(\z) := \psi_N(J)(\z) = \frac{\L_{\nu}(2,\lambda, \z) }{(z_1^2 + \ldots + z_N^2-a^2)}$$
is entire. Since $\x \mapsto \L_{\nu}(2, \lambda, \x) \in L^1(\R^N, |\x|^{2\nu +2 - N}\,\dxx)$ and $N = \lfloor 2\nu +4\rfloor$, we find that $\x \mapsto \mc{J}(\x) \in L^1(\R^N)$. Moreover $\mc{J}$ still has exponential type at most $2$. By a theorem of Plancherel and P\'{o}lya \cite[\S47, Theorem III]{PP} we know that $\mc{J}$ is bounded, and therefore it belongs to $L^2(\R^N)$. From the Paley-Wiener theorem in several variables \cite[Chapter III, Theorem 4.9]{SW} we find that $\widehat{\mc{J}}$ is a continuous function supported on $\overline{B}(\pi^{-1})$, and thus
\begin{equation}\label{Sec6_PW1}
\mc{J}(\z) = \int_{\overline{B}(\pi^{-1})} \widehat{\mc{J}}(\t)\,e^{2\pi i \t \cdot \z}\,\d \t.
\end{equation}
If we write $\z = \x + i\y$, from \eqref{Sec6_PW1} we find that 
\begin{equation*}
|\mc{J}(\z)| \leq C \,e^{2|\y|},
\end{equation*}
which implies that 
\begin{equation}\label{Sec6_PWTD_bound}
\left|\L_{\nu}(2, \lambda, \z)\right|  \leq C \left|(z_1^2 + \ldots + z_N^2-a^2)\right|\,e^{2|\y|}.
\end{equation}
The bound \eqref{Sec6_PWTD_bound} allows us to invoke the Paley-Wiener theorem for distributions \cite[Theorem 1.7.7]{Hor} to conclude that $\x \mapsto \L_{\nu}(2, \lambda, \x)$ defines a tempered distribution with compact support contained in the closed ball $\overline{B}(\pi^{-1})$. 

\smallskip

We now have all the ingredients to compute the Fourier transforms of $\x \mapsto \G_{\mu}(\x)$ and $\x \mapsto D_{\nu}^-(\mu,{\bf x})$ outside $\overline{B}(\pi^{-1})$. Writing $\mc{F}_{\lambda} (\x) = e^{-\lambda |\x|}$, its Fourier transform $\widehat{\mc{F}}_{\lambda}$ is given by \eqref{Intro_FTF}. Let $\varepsilon >0$ and $\varphi$ be a Schwartz function on $\R^N$ that vanishes in the closed ball $\overline{B}(\pi^{-1} + \varepsilon)$. We then have
\begin{align}\label{Sec6_Dist_eq1}
\begin{split}
\int_{\R^N} \G_{\mu}(\x) \, \widehat{\varphi}(\x)\, \dxx & =  \int_{\R^N} \int_0^{\infty} \left\{ e^{-\lambda |\x|} - e^{-\lambda} \right\} \widehat{\varphi}(\x)\, \dmu\,\dxx\\
& = \int_0^{\infty} \int_{\R^N} \left\{ e^{-\lambda |\x|} - e^{-\lambda} \right\} \widehat{\varphi}(\x)\,\dxx\, \dmu \\
& = \int_0^{\infty} \int_{\R^N}  \widehat{\mc{F}}_{\lambda}(\y) \,\varphi(\y)\,\d \y\, \dmu\\
& = \int_{\R^N} \left(\int_0^{\infty} \widehat{\mc{F}}_{\lambda}(\y) \,\dmu\right)\varphi(\y)\,\d \y.
\end{split}
\end{align}
We also have
\begin{align}\label{Sec6_Dist_eq2}
\begin{split}
\int_{\R^N} D_{\nu}^-(\mu,\x) \, \widehat{\varphi}(\x)\, \dxx & =  \int_{\R^N} \int_0^{\infty} \Big\{e^{-\lambda |{\bf x}|} - \mc{L}_{\nu}(2,\lambda, \x)\Big\} \widehat{\varphi}(\x)\, \dmu\,\dxx\\
& = \int_0^{\infty} \int_{\R^N}\Big\{e^{-\lambda |{\bf x}|} - \mc{L}_{\nu}(2,\lambda, \x)\Big\}\widehat{\varphi}(\x)\,\dxx\, \dmu \\
& = \int_0^{\infty} \int_{\R^N}  \widehat{\mc{F}}_{\lambda}(\y) \,\varphi(\y)\,\d \y\, \dmu\\
& = \int_{\R^N} \left(\int_0^{\infty} \widehat{\mc{F}}_{\lambda}(\y) \,\dmu\right)\varphi(\y)\,\d \y.
\end{split}
\end{align}
From \eqref{Sec6_Dist_eq1} and \eqref{Sec6_Dist_eq2} we conclude that the tempered distribution defined by the function 
$$\x \mapsto \left(\G_{\mu}(\x) - D_{\nu}^-(\mu,\x) \right)$$ 
has Fourier transform with compact support contained in the closed ball $\overline{B}(\pi^{-1})$. By the converse of the Paley-Wiener theorem for distributions, this function is equal almost everywhere to the restriction to $\R^N$ of an entire function of exponential type at most $2$ that we shall call $\z\mapsto \mc{L}_{\nu}(2, \mu, \z)$. By Lemma \ref{Sec6_Lem23} we find that 
\begin{equation}\label{Sec6_Min_eq1}
\mc{L}_{\nu}(2,\mu,\x) = \G_{\mu}(\x) - D_{\nu}^-(\mu,\x) 
\end{equation}
for all $\x \neq 0$, and that 
\begin{equation}\label{Sec6_Min_eq2}
\mc{L}_{\nu}(2,\mu, \x) = \G_{\mu}(\x)
\end{equation} 
for $|{\bf x}| = \xi$ with $A_{\nu}(\xi)=0$. Since $D_{\nu}^-(\mu,\x)$ is nonnegative we also find that
\begin{equation}\label{Sec6_Min_eq3}
\mc{L}_{\nu}(2, \mu, \x) \leq \G_{\mu}(\x)
\end{equation}
for all $\x \in \R^N$. From \eqref{Sec6_Min_eq1} we obtain
\begin{align}\label{Sec6_Min_eq4}
\begin{split}
 \int_{\R^N} \big\{\mc{G}_{\mu}(\x) & - \mc{L}_{\nu}(2,\mu, \x) \big\}\,|\x|^{2\nu+2-N}\,\dxx =  \int_{\R^N} D_{\nu}^-(\mu,\x)\,|\x|^{2\nu+2-N}\,\dxx\\
& = \int_{\R^N} \int_0^{\infty} \Big\{e^{-\lambda |{\bf x}|} - \mc{L}_{\nu}(2, \lambda, \x)\Big\} |\x|^{2\nu+2-N} \, \dmu \,\dxx\\
& =  \int_0^{\infty} U^{N-}_{\nu}(2,\lambda)\, \dmu.
\end{split}
\end{align}

\smallskip

\noindent{\it Step 2}. The case $N = 1$. 

\smallskip 

The entire minorant $\z\mapsto \mc{L}_{\nu}(2, \mu, \z)$, constructed above for the case $N = \lfloor 2\nu +4\rfloor$, is a radial function when restricted to $\R^N$ by \eqref{Sec6_Min_eq1}. Therefore, for any $M \in SO(N)$ we find that $\mc{L}_{\nu}(2, \mu, M\x) = \mc{L}_{\nu}(2, \mu, \x)$ and thus  
$$\widetilde{\mc{L}_{\nu}}(2, \mu, \x) = \mc{L}_{\nu}(2, \mu, \x)$$
for all $\x \in \R^N$. This implies that 
$$\widetilde{\mc{L}_{\nu}}(2, \mu, \z) = \mc{L}_{\nu}(2, \mu, \z)$$
for all $\z \in \C^N$, and by Lemma \ref{Lemma19_HV} we have the power series expansion
\begin{equation*}
\mc{L}_{\nu}(2, \mu, \z) = \sum_{k=0}^{\infty} c_k (z_1^2 + \ldots + z_N^2)^k.
\end{equation*}
Define the function $L_{\nu}(2,\mu,\cdot):\C \to \C$ by
\begin{equation}\label{Sec6_const_min}
L_{\nu}(2, \mu, z) = \sum_{k=0}^{\infty} c_k z^k.
\end{equation}
By Lemma \ref{type-to-ntype} we know that $z \mapsto L_{\nu}(2, \mu,z)$ is an entire function of exponential type at most $2$. It is easy to see that properties \eqref{Sec6_Min_eq1}, \eqref{Sec6_Min_eq2}, \eqref{Sec6_Min_eq3} and \eqref{Sec6_Min_eq4} continue to hold in the case $N=1$ with the minorant $z \mapsto L_{\nu}(2, \mu, z)$.

\smallskip

\noindent{\it Step 3}. The case $N \geq 1$. 

\smallskip

For general $N \geq 1$ we now define
$$\mc{L}_{\nu}(2, \mu, \z)  = \psi_N\big(L_{\nu}(2, \mu,\cdot)\big)(\z).$$
This function has exponential type at most $2$ by Lemma \ref{type-to-ntype} and verifies properties \eqref{Sec6_Min_eq1}, \eqref{Sec6_Min_eq2}, \eqref{Sec6_Min_eq3} and \eqref{Sec6_Min_eq4}.

\smallskip

For the majorants, the construction follows along the same lines, using $D_{\nu}^+(\mu,{\bf x})$ defined in \eqref{Sec6_eq1_int2}.

\subsubsection*{Optimality} We now prove that the minorants and majorants constructed above are in fact the best possible. We start by observing that the exact same argument as in the proof of Theorem \ref{Thm1} (ii) gives
$$U^{N\pm}_{\nu}(\delta,\mu) = \tfrac12 \,\omega_{N-1}\, U^{1\pm}_{\nu}(\delta,\mu).$$
Therefore it suffices to prove optimality in the case $N=1$ and $\delta = 2$. Let us do the minorant case, and the majorant case will be analogous.

\smallskip

Let $L :\C \to \C$ be a real entire function of exponential type at most $2$ such that 
\begin{equation}\label{Sec6_cond_min}
L(x) \leq \G_{\mu}(x)
\end{equation} 
for all $x \in \R$ and 
\begin{equation*}
 \int_{-\infty}^{\infty} \big\{\mc{G}_{\mu}(x) - L(x)\big\}\,|x|^{2\nu+1}\,\dx < \infty.
\end{equation*}
Let $L_{\nu}(2, \mu,\cdot):\C \to \C$ be the minorant of $\mc{G}_{\mu}$ constructed in \eqref{Sec6_const_min}. Then the function
$$F(z) = L_{\nu}(2,\mu,z) - L(z)$$
is real entire of exponential type at most $2$ and belongs to $L^1\big(\R, |x|^{2\nu +1}\,\dx\big)$. By \eqref{Lem17_i} we then have $F \in L^1\big(\R, |E_{\nu}(x)|^{-2}\,\dx\big)$. By Theorem \ref{Cor22} there exist $U,V \in \mc{H}(E_{\nu})$ such that 
$$F(z) = U(z)U^*(z) - V(z)V^*(z).$$
Using now the key identity \eqref{Lem17_ii} we find
\begin{align}\label{Sec6_Id_min_final}
\begin{split}
\int_{-\infty}^{\infty} F(x)\,|x|^{2\nu+1}\,\dx & = \int_{-\infty}^{\infty} \big\{|U(x)|^2 - |V(x)|^2\big\}\,|x|^{2\nu+1}\,\dx\\
& = c_{\nu}^{-1} \int_{-\infty}^{\infty} \big\{|U(x)|^2 - |V(x)|^2\big\}\,|E_{\nu}(x)|^{-2}\,\dx\\
& = c_{\nu}^{-1}\sum_{A_{\nu}(\xi)=0} \frac{\big\{|U(\xi)|^2 - |V(\xi)|^2\big\}}{K_{\nu}(\xi,\xi)}\\
& = c_{\nu}^{-1}\sum_{A_{\nu}(\xi)=0} \frac{F(\xi)}{K_{\nu}(\xi,\xi)}\\
& \geq 0,
\end{split}
\end{align}
since, by \eqref{Sec6_Min_eq2} and \eqref{Sec6_cond_min},
\begin{align*}
F(\xi) = L_{\nu}(2, \mu,\xi) - L(\xi) = \G_{\mu}(\xi) - L(\xi) \geq 0
\end{align*}
for all $\xi \in \R$ with $A_{\nu}(\xi)=0$. Inequality \eqref{Sec6_Id_min_final} is plainly equivalent to 
\begin{equation*}
 \int_{-\infty}^{\infty} \big\{\mc{G}_{\mu}(x) - L(x)\big\}\,|x|^{2\nu+1}\,\dx \geq  \int_{-\infty}^{\infty} \big\{\mc{G}_{\mu}(x) - L_{\nu}(2, \mu,x)\big\}\,|x|^{2\nu+1}\,\dx,
\end{equation*}
which concludes the proof.


\section{Hilbert-type inequalities}
\subsection{Proof of Theorem \ref{Thm4}} Recall that we write $\mc{F}_{\lambda}(\x) =e^{-\lambda |\x|}$. Let $2\nu +2 - N = 2r$, where $r$ is a nonnegative integer. In Theorem \ref{Thm1} (iv) we have produced a minorant  $\z \mapsto \mc{L}_{\nu}(2\pi\delta, \lambda, \z) \in \E_{2\pi\delta}^{N-} (\mc{F}_{\lambda})$ such that 
\begin{equation}\label{Sec7_eq1}
\x \mapsto \mc{L}_{\nu}(2\pi\delta,\lambda, \x)  \in L^1(\R^N, |\x|^{2r}\,\dxx).
\end{equation} 
In particular $\x \mapsto \mc{L}_{\nu}(2\pi\delta,\lambda, \x)  \in L^1(\R^N)$. By a theorem of Plancherel and P\'{o}lya \cite[\S47, Theorem III]{PP} we know that $\x \mapsto \mc{L}_{\nu}(2\pi\delta,\lambda, \x)$ is bounded, and therefore it belongs to $L^2(\R^N)$. By the Paley-Wiener theorem \cite[Chapter III, Theorem 4.9]{SW}, the Fourier transform
\begin{equation*}
\widehat{\mc{L}_{\nu}}(2\pi\delta,\lambda, \t) = \int_{\R^N}  \mc{L}_{\nu}(2\pi\delta, \lambda, \x) \, e^{-2\pi i \x \cdot \t}\,\dxx
\end{equation*}
is a continuous function supported on the closed ball $\overline{B}(\delta) = \{\t \in \R^N; |\t| \leq \delta\}$. From \eqref{Sec7_eq1} we know that $\t \mapsto \widehat{\mc{L}_{\nu}}(2\pi\delta,\lambda, \t)$ has continuous partial derivatives of all orders less than or equal to $2r$. In particular, 
\begin{equation*}
(4\pi^2)^{-r}\,(-\Delta)^r \,\widehat{\mc{L}_{\nu}}(2\pi\delta,\lambda, \t) = \int_{\R^N}  \mc{L}_{\nu}(2\pi\delta,\lambda, \x) \,|\x|^{2r}\, e^{-2\pi i \x \cdot \t}\,\dxx
\end{equation*}
is also supported on the closed ball $\overline{B}(\delta)$. We then have
\begin{align*}
0 & \leq \int_{\R^N} \big\{\mc{F}_{\lambda}(\x) -  \mc{L}_{\nu}(2\pi\delta,\lambda, \x)\big\}\, |\x|^{2r}\,\left|\sum_{j=1}^M a_j\, e^{-2\pi i \x\cdot \y_j}\right|^2\,\dxx\\
& = \sum_{j,l=1}^M a_j\, \overline{a_l}\,\int_{\R^N} \big\{\mc{F}_{\lambda}(\x) -  \mc{L}_{\nu}(2\pi\delta, \lambda, \x)\big\}\, |\x|^{2r}\, e^{-2\pi i \x\cdot (\y_j- \y_l)}\,\dxx\\
& =U_{\nu}^{N-}(2\pi\delta, \lambda) \ \sum_{j=1}^M |a_j|^2 + \sum_{\stackrel{j,l=1}{j\neq l}}^{M}a_j\, \overline{a_l}\,(4\pi^2)^{-r}\,(-\Delta)^r \,\widehat{\mc{F}_{\lambda}}(\y_j - \y_l).
\end{align*}
Therefore we arrive at
\begin{equation}\label{Sec7_eq2}
- U_{\nu}^{N-}(2\pi\delta, \lambda) \ \sum_{j=1}^M |a_j|^2 \leq \sum_{\stackrel{j,l=1}{j\neq l}}^{M}a_j\, \overline{a_l}\, (4\pi^2)^{-r}\,(-\Delta)^r \,\widehat{\mc{F}_{\lambda}}(\y_j - \y_l).
\end{equation}
If we integrate both sides of \eqref{Sec7_eq2} with respect to $\dmu$ we arrive at
\begin{equation*}
- U_{\nu}^{N-}(2\pi\delta, \mu) \ \sum_{j=1}^M |a_j|^2 \leq \sum_{\stackrel{j,l=1}{j\neq l}}^{M}a_j\, \overline{a_l}\,\,\mc{Q}_{\mu,r}(\y_j - \y_l),
\end{equation*}
and this proves (i). The proof of (ii) follows along the same lines using the extremal majorants $\z \mapsto \mc{M}_{\nu}(2\pi\delta, \lambda, \z)$.


\section{Periodic analogues}\label{periodic_analogues}

The general interpolation theory developed in Section \ref{polya} can also be used to treat the problem of optimal one-sided approximation of even periodic functions by trigonometric polynomials. In this section we provide a brief account of these methods.

\subsection{Reproducing kernel Hilbert spaces of polynomials}

We start by recalling some facts about reproducing kernel Hilbert spaces of polynomials as in \cite{Li, LV} (see also \cite{G} for an approach based on the argument principle). In what follows we denote by $\ud \subset \C$ the open unit disc and by $\uc$ the unit circle. Let $n\in\Z^{+}$ and define $\mathcal{P}_n$ to be the set of polynomials of degree at most $n$ with complex coefficients. If $P \in \mc{P}_n$ we define the polynomial $P^{*,n}$ by
\begin{equation}\label{antiunitary_map}
P^{*,n}(z) = z^n \, \overline{P\big(\bar{z}\,^{-1}\big)}.
\end{equation}
When $P$ is a polynomial of exact degree $n$, we shall usually omit the superscript $n$ and write only $P^*$ for simplicity. Part of the notation below is inspired in the notation already used for the de Branges spaces presented in the introduction (specially the conjugation $*$ and the basic functions $K$, $A$ and $B$) but the difference of context between reproducing kernel Hilbert spaces of entire functions and reproducing kernel Hilbert spaces of polynomials should be clear.
\smallskip

Let $P$ be a polynomial of exact degree $n+1$ such that
\begin{align}\label{P-ineq}
|P^*(z)|<|P(z)|
\end{align}
for all $z\in\ud$. For our purposes we assume furthermore that $P$ has no zeros on the unit circle. We consider the Hilbert space $\dbpn$ consisting of the elements in $\mathcal{P}_n$ with scalar product
$$\langle Q,R \rangle_{\dbpn} = \int_{-\hh}^{\hh} Q(z) \,\overline{R(z)} \, |P(z)|^{-2}\,\d\theta,$$
where $z = e^{2\pi i \theta}$. The function 
$$K(w,z) = \frac{P(z)\overline{P(w)} - P^*(z)\overline{P^*(w)}}{1-\bar{w}z}$$
 is the reproducing kernel of  the space $\dbpn$ as it verifies the identity
$$\langle Q, K(w,\cdot)\rangle_{\dbpn} = Q(w)$$
for all $w \in \C$. If we define $A$ and $B$ by
\begin{align*}
A(z) &= \frac12 \,\big\{P(z)+P^*(z)\big\},\\
B(z) &= \frac{i}{2}\,\big\{ P(z) - P^*(z)\big\},
\end{align*}
then $A(z)=A^*(z)$, $B(z) = B^*(z)$ and $P(z) = A(z) - i B(z)$. The inequality \eqref{P-ineq} implies that $A$ and $B$ have zeros only on the unit circle. The reproducing kernel has the alternative representation
\begin{equation}\label{Sec8_Rep2}
K(w,z) = \frac2i \,\frac{B(z) \overline{A(w)} - A(z)\overline{B(w)} }{ 1-\bar{w} z}.
\end{equation}
We note that the coefficient of $z^0$ and the coefficient of $z^{n+1}$ of $P$ cannot have equal absolute values, for this would contradict \eqref{P-ineq} at $z=0$. Hence $\tau P -\bar{\tau} P^*$ has degree $n+1$ for every $\tau\in\uc$. In particular, $A$ and $B$ have degree $n+1$. Since
$$K(w,w) = \langle K(w,\cdot),K(w,\cdot)\rangle_{\dbpn},$$
$K(w,w)=0$ would imply that $K(w,z) =0$ for all $z$, hence $Q(w)=0$ for every $Q\in\mathcal{P}_{n}$ by the reproducing kernel identity, which is not possible. Therefore $K(w,w)>0$ for all $w\in\CC$. From the representation \eqref{Sec8_Rep2} it follows that $A$ and $B$ have only simple zeros and their zeros never agree. 

\smallskip

From \eqref{Sec8_Rep2} we see that the $n+1$ polynomials $\{z \mapsto K(\zeta,z);\  B(\zeta) =0\}$ form an orthogonal basis for $\dbpn$ and, in particular, we arrive at Parseval's formula (see \cite[Theorem 2]{Li})
\begin{align}\label{poly-parseval}
||Q||^2_{\dbpn}= \sum_{B(\zeta) =0} \frac{|Q(\zeta)|^2}{K(\zeta,\zeta)}.
\end{align}
An analogous expression holds if we consider the zeros of $A$.


\subsection{Orthogonal polynomials on the unit circle} We now recall a few facts about the classical theory of orthogonal polynomials on the unit circle. In doing so, we follow the notation of \cite{S} to facilitate some of the references. Related material on the topics presented here can be found in \cite{LV, Sz}. The quadrature formula of this section can also be found in \cite{G, JNT}.

\smallskip

Let $\vartheta$ be a nontrivial probability measure on $\uc$ (we say that $\vartheta$ is trivial if it has finite support). We define the {\it monic orthogonal polynomials} $\Phi_n(z) = \Phi_n(z;\d\vartheta)$ by the conditions 
$$\Phi_n(z) = z^n + \text{lower order terms}\,; \qquad \int_{\uc} \Phi_n(z)\, \bar{z}^j\, \d\vartheta(z)=0\qquad (0\le j<n);$$
and we define the {\it orthonormal polynomials} $\varphi_n := \Phi_n/||\Phi_n||_2$. The scalar product on $L^2(\uc, \d\vartheta)$ is given by
$$\langle f,g\rangle_{L^2(\uc, \d\vartheta)} = \int_{\uc} f(z)\,\overline{g(z)}\, \d\vartheta(z),$$
and we drop the subscript if the $L^2$-space is clear. Observe that 
\begin{equation}\label{antiunitary_2}
\langle Q^{*,n},R^{*,n}\rangle = \langle R,Q\rangle
\end{equation} 
for all polynomials $Q,R \in \mc{P}_n$, where the map $*$ was defined in \eqref{antiunitary_map}. The next lemma collects the relevant facts for our purposes.

\begin{lemma} \label{Sec8_Lem25} 
Let $\vartheta$ be a nontrivial probability measure on $\uc$. 
\begin{enumerate}
\item[(i)]  If $\vartheta(A) = \vartheta(\overline{A})$ for every Borel set  $A \subset \uc$, then $\Phi_n$ has real coefficients. 
\smallskip
\item[(ii)] $\varphi_n$ has all its zeros in $\ud$ and $\varphi_n^*$ has all its zeros in $\CC\backslash\overline{\ud}$. 
\smallskip
\item[(iii)] Define a new measure $\vartheta_n$ on $\uc$ by
$$\d\vartheta_n (z)= \frac{\d\theta}{\big|\varphi_n(e^{2\pi i\theta}; \d\vartheta)\big|^2},$$
where $z = e^{2\pi i \theta}$, $-\hh \leq \theta < \hh$. 
Then $\vartheta_n$ is a probability measure on $\uc$, $\varphi_j(z;\d\vartheta) =  \varphi_j(z;\d\vartheta_n)$ for $j=0,1,\ldots,n$ and for all $Q,R \in \mathcal{P}_{n}$ we have
\begin{equation}\label{Sec8_equ_measures}
\langle Q, R \rangle_{L^2(\uc,\d\vartheta)} = \langle Q, R \rangle_{L^2(\uc,\d\vartheta_n)}.\end{equation}
\end{enumerate}
\end{lemma}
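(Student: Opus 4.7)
For part (i), the plan is to invoke the uniqueness of the monic orthogonal polynomial of degree $n$. Define $\tilde\Phi_n(z) := \overline{\Phi_n(\bar z)}$, which is monic of degree $n$ with coefficients conjugate to those of $\Phi_n$. Under the hypothesis $\vartheta(A) = \vartheta(\overline A)$, the change of variable $z \mapsto \bar z$ inside the Stieltjes integral transforms each defining relation $\int \Phi_n(z)\bar z^j \, \d\vartheta(z) = 0$ into $\int \tilde\Phi_n(z)\bar z^j \, \d\vartheta(z) = 0$ for $0 \leq j < n$. Uniqueness then forces $\tilde\Phi_n = \Phi_n$, which says precisely that $\Phi_n$ has real coefficients.

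For part (ii), I would use the extremal characterization of $\Phi_n$ as the unique minimizer of the $L^2(\d\vartheta)$ norm among monic polynomials of degree $n$. Suppose $z_0$ were a zero with $|z_0| \geq 1$ and factor $\Phi_n(z) = (z - z_0) R(z)$. The identity $|z - z_0|^2 = |1 - \bar z_0 z|^2$ valid for $|z| = 1$ gives
\[
\|\Phi_n\|_{L^2(\d\vartheta)}^2 = \int_{\uc} |1 - \bar z_0 z|^2 |R(z)|^2 \, \d\vartheta(z),
\]
so the monic polynomial $-\bar z_0^{-1}(1 - \bar z_0 z) R(z)$ has norm $|z_0|^{-1}\|\Phi_n\|_{L^2(\d\vartheta)}$; for $|z_0| > 1$ this strictly contradicts minimality, and $|z_0| = 1$ is excluded by a short perturbation exploiting the nontriviality of $\vartheta$. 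The assertion for $\varphi_n^*$ then follows at once, since its zeros are the reflections $1/\bar\zeta$ of the zeros $\zeta$ of $\varphi_n$.

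For part (iii), my plan proceeds in three stages. First, to verify $\vartheta_n(\uc) = 1$, I would use $|\varphi_n(z)|^2 = \varphi_n(z)\varphi_n^*(z) z^{-n}$ on $\uc$ to rewrite the total mass as
\[
\int_{-\h}^{\h}|\varphi_n(e^{2\pi i\theta})|^{-2}\,\d\theta = \frac{1}{2\pi i}\oint_{\uc}\frac{z^{n-1}}{\varphi_n(z)\varphi_n^*(z)}\,\d z,
\]
and evaluate via partial-fraction decomposition of $1/(\varphi_n\varphi_n^*)$ together with residues at the simple zeros of $\varphi_n$ in $\ud$ (using that $\varphi_n^*$ is nonvanishing on $\overline{\ud}$ by (ii), together with $\varphi_n^*(0) = \kappa_n$, the positive leading coefficient of $\varphi_n$). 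Second, I would establish that $\varphi_n(\cdot;\d\vartheta)$ is the $n$-th orthonormal polynomial for $\d\vartheta_n$ by verifying
\[
\int_{\uc}\varphi_n(z)\bar z^j \, \d\vartheta_n(z) = \int_{-\h}^{\h}\frac{z^{n-j}}{\varphi_n^*(z)}\,\d\theta = 0 \qquad (0 \leq j \leq n-1),
\]
which follows immediately from the mean value property applied to the holomorphic function $z \mapsto z^{n-j}/\varphi_n^*(z)$ on $\overline{\ud}$ (note $n-j \geq 1$); combined with $\|\varphi_n\|_{L^2(\d\vartheta_n)}^2 = \int \d\theta = 1$ and matching positive leading coefficients, this gives $\varphi_n(\cdot;\d\vartheta) = \varphi_n(\cdot;\d\vartheta_n)$. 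Third, the defining relations $\int \varphi_n(z) z^{-j}\, \d\mu = 0$ for $0 \leq j < n$ constitute a triangular linear system that expresses the moments $m_k := \int z^k\, \d\mu$ for $1 \leq k \leq n$ in terms of $m_0, \ldots, m_{k-1}$ and the coefficients of $\varphi_n$ (the coefficient of $m_k$ being $\kappa_n \neq 0$); since $m_0 = 1$ and $\varphi_n$ coincide for $\mu = \vartheta$ and $\mu = \vartheta_n$, the recursion forces $\int z^k\, \d\vartheta = \int z^k\, \d\vartheta_n$ for $|k| \leq n$ (with $|k| > 0$ handled by conjugate symmetry of moments on $\uc$). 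The identity of moments then implies equality of inner products on $\mathcal{P}_n$, which is \eqref{Sec8_equ_measures}, and Gram--Schmidt yields $\varphi_j(\cdot;\d\vartheta) = \varphi_j(\cdot;\d\vartheta_n)$ for $0 \leq j \leq n$. The main technical obstacle will be organizing the residue computation in the first stage of (iii) so that the contour integral collapses cleanly to $1$; the remaining items combine standard OPUC tools (uniqueness, Cauchy's theorem, and Gram--Schmidt) in a direct manner.
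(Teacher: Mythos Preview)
Your arguments for (i) and (ii) are correct. For (i) you use the substitution $z\mapsto\bar z$ directly to show $\overline{\Phi_n(\bar z)}$ satisfies the same orthogonality relations as $\Phi_n$; the paper instead compares $\Phi_n^*$ with $z^n\Phi_n(1/z)$ via the antiunitary identity~\eqref{antiunitary_2}. Both reach the same conclusion through uniqueness, and your version is arguably more transparent. For (ii) and (iii) the paper simply cites Simon~\cite{S}, so any self-contained argument you give is already more than the paper provides.

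For (iii), stages~1 and~2 are sound (and stage~2 is clean). The gap is in stage~3. The relation $\sum_{l=0}^{n} a_l\, m_{l-j}=0$ (with $a_n=\kappa_n$) solves for $m_{n-j}$ in terms of $m_{-j},\ldots,m_{n-j-1}$; taking $j=n-1$ you get
\[
m_1 \;=\; -\kappa_n^{-1}\bigl(a_{n-1}m_0 + a_{n-2}m_{-1}+\cdots+a_0\,m_{1-n}\bigr)
      \;=\; -\kappa_n^{-1}\bigl(a_{n-1}m_0 + a_{n-2}\overline{m_1}+\cdots+a_0\,\overline{m_{n-1}}\bigr),
\]
which involves $\overline{m_1},\ldots,\overline{m_{n-1}}$, not just $m_0$. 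So once the conjugate symmetry $m_{-k}=\overline{m_k}$ is imposed, the system is genuinely coupled, not triangular, and the recursion as you describe it does not close. Knowing only that $\varphi_n$ and $m_0$ agree for the two measures is not, by this linear-algebra route, enough to force all $m_k$ to agree.

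The conclusion is still true, but it needs a different mechanism: the inverse Szeg\H{o} recursion. From $\Phi_n$ one reads off $\alpha_{n-1}=-\overline{\Phi_n(0)}$ and then recovers $\Phi_{n-1}$ (using $|\alpha_{n-1}|<1$, which follows from (ii)); iterating gives $\alpha_0,\ldots,\alpha_{n-1}$ and hence $\Phi_0,\ldots,\Phi_{n-1}$. Thus $\varphi_n(\,\cdot\,;\d\vartheta)=\varphi_n(\,\cdot\,;\d\vartheta_n)$ forces $\varphi_j(\,\cdot\,;\d\vartheta)=\varphi_j(\,\cdot\,;\d\vartheta_n)$ for all $j\le n$, and then your Gram--Schmidt remark yields~\eqref{Sec8_equ_measures}. (As a bonus, once the inner products match on $\mathcal P_n$, the identity $m_0(\vartheta_n)=\langle 1,1\rangle_{\d\vartheta_n}=\langle 1,1\rangle_{\d\vartheta}=1$ falls out for free, so you can dispense with the residue computation in stage~1.) Replacing your stage~3 with this recursion closes the argument.
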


\begin{proof}
(i) Let $Q \in \mathcal{P}_{n-1}$. We have
\begin{align*}
0 = \langle Q(z),\Phi_n(z) \rangle = \langle Q(1/z), \Phi_n(1/z) \rangle = \langle z^n Q(1/z), z^n\Phi_n(1/z)\rangle.
\end{align*}
Since $z^nQ(1/z)$ can be any polynomial of the form $\sum_{j=0}^{n-1} b_j z^{j+1}$, we obtain that $z^n\Phi_n(1/z)$ is orthogonal to $z^j$ for $j=1,\ldots,n$. From \eqref{antiunitary_2} there exists $c \in \C$ so that
\begin{equation}\label{Sec8_eq1_comparison}
\Phi_n^*(z) = cz^n \Phi_n(1/z).
\end{equation}
Since $\Phi_n$ is monic, when we compare the coefficient of $z^0$ on both sides of \eqref{Sec8_eq1_comparison} we find that $c=1$. This implies the result.

\smallskip

\noindent (ii) This is \cite[Theorem 4.1]{S}.

\smallskip

\noindent (iii) This follows from \cite[Theorem 2.4, Proposition 4.2 and Theorem 4.3]{S}.
\end{proof}

By Lemma \ref{Sec8_Lem25} (ii) and the maximum principle we have
\begin{align*}\label{HB-analogue}
|\varphi_{n+1}(z)| < |\varphi_{n+1}^*(z)|
\end{align*}
for all $z\in \ud$. By Lemma \ref{Sec8_Lem25} (iii) we note (Christoffel-Darboux formula) that $\mathcal{P}_n$ with the scalar product $\langle \cdot,\cdot\rangle_{L^2(\uc,\d\vartheta)}$ is a reproducing kernel Hilbert space with reproducing kernel
\begin{equation}\label{Sec8_defKn}
K_{n}(w,z) =\frac{\varphi^*_{n+1}(z)\, \overline{\varphi_{n+1}^*(w)} - \varphi_{n+1}(z)\,\overline{\varphi_{n+1}(w)}}{1-\bar{w}z}.
\end{equation}
Observe that $\varphi^*_{n+1}$ plays the role of $P$ in the previous subsection. As before we define the two polynomials
\begin{align}
A_{n+1}(z) &= \frac{1}{2}\big\{\varphi_{n+1}^*(z) + \varphi_{n+1}(z)\big\},\label{Sec8_DefAn}\\
B_{n+1}(z) &= \frac{i}{2}\big\{ \varphi_{n+1}^*(z) - \varphi_{n+1}(z)\big\},\label{Sec8_DefBn}
\end{align}
and we note that \eqref{poly-parseval} holds.

\smallskip

Define by $\Gamma_{n}$ the space of rational functions $W$ of the form
$$W(z) = \sum_{k=-n}^n a_k z^k\,,$$
where $a_k\in\CC$. 

\begin{corollary}\label{Sec8_Cor26}
Let $\vartheta$ be a nontrivial probability measure on $\uc$ and $W\in\Gamma_{n}$. Then with $z=e^{2\pi i \theta}$ we have
$$\int_{\uc} W(z)\,\d\vartheta(z) = \int_{-\hh}^{\hh} \frac{W(z)}{|\varphi_{n+1}(z;\d\vartheta)|^2}\,\d\theta = \sum_{B_{n+1}(\zeta)=0} \frac{W(\zeta)}{K_{n}(\zeta,\zeta)}.$$ 
A similar formula holds if we consider the zeros of $A_{n+1}$.
\end{corollary}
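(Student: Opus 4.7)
The corollary contains two equalities and my plan is to handle each by a separate reduction. The first equality is a change-of-measure statement that should follow from Lemma \ref{Sec8_Lem25}(iii), and the second should be a direct application of the Parseval identity \eqref{poly-parseval} in the Hilbert space $\mc{H}_n(\varphi^{*}_{n+1})$. The crucial bridge between the scalar $W\in \Gamma_n$ and the polynomial inner products appearing in both of these tools is the elementary observation that on $\uc$ we have $\bar{z}=z^{-1}$, so any Laurent polynomial
\begin{equation*}
W(z) = \sum_{k=-n}^n a_k z^k
\end{equation*}
can be written, for $z \in \uc$, as $W(z) = P(z) + \overline{Q(z)}$ with $P,Q \in \mc{P}_n$, namely $P(z) = \sum_{k=0}^n a_k z^k$ and $Q(z) = \sum_{k=1}^n \overline{a_{-k}}\, z^k$.

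For the first equality I would note that both integrals are linear in $W$ and argue term by term in the basis $\{z^k\}_{k=-n}^n$. For $0 \leq k \leq n$ the identity $\int_\uc z^k \d\vartheta = \int_\uc z^k \d\vartheta_n$ is exactly \eqref{Sec8_equ_measures} applied to $Q=z^k$, $R=1$. For $-n \leq k < 0$ I would write $\int_\uc z^k \d\vartheta = \overline{\int_\uc z^{-k} \d\vartheta}$, which is valid because $\vartheta$ is a real-valued (probability) measure, and then reduce to the previous case; the same manipulation applied to $\d\vartheta_n$ gives the desired equality. Equivalently, one splits $W = P + \overline{Q}$ as above and uses \eqref{Sec8_equ_measures} on each piece.

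For the second equality the key point is that the space $\dbpn$ of Section~\ref{periodic_analogues} with the choice $P = \varphi^{*}_{n+1}$ is precisely the reproducing kernel Hilbert space of $\mc{P}_n$ under the inner product $\langle \cdot,\cdot\rangle_{L^2(\uc,\d\vartheta_n)}$, and by Lemma \ref{Sec8_Lem25}(iii) this is the same as $\langle \cdot,\cdot\rangle_{L^2(\uc,\d\vartheta)}$. Under this identification $B$ in \eqref{poly-parseval} becomes $B_{n+1}$ and $K$ becomes $K_n$, and its zeros lie on $\uc$, so the nodes $\zeta$ make $W(\zeta)$ well-defined. Polarizing \eqref{poly-parseval} gives, for all $Q,R \in \mc{P}_n$,
\begin{equation*}
\int_\uc Q(z)\,\overline{R(z)}\, \d\vartheta(z) = \sum_{B_{n+1}(\zeta)=0} \frac{Q(\zeta)\, \overline{R(\zeta)}}{K_n(\zeta,\zeta)}.
\end{equation*}
Applying this with $(Q,R) = (P,1)$ and with $(Q,R)=(1,Q)$ (using the decomposition $W = P + \overline{Q}$ from above, valid pointwise on $\uc$ and in particular at every zero $\zeta$ of $B_{n+1}$) and adding the results yields the claimed quadrature identity.

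No step presents a serious obstacle; the whole argument is essentially a bookkeeping exercise that matches up the two pieces (polynomial and conjugate-polynomial) of a Laurent polynomial on $\uc$ with the two inputs of the sesquilinear Parseval identity. The only subtle point worth underlining is that the Parseval formula \eqref{poly-parseval} requires the polynomial $P$ defining the space to satisfy $|P^{*}(z)| < |P(z)|$ on $\ud$, and here we must use $P = \varphi^{*}_{n+1}$ (rather than $\varphi_{n+1}$), since by Lemma \ref{Sec8_Lem25}(ii) and the maximum principle it is $\varphi^{*}_{n+1}$ that dominates on $\ud$; this choice is consistent with the definitions \eqref{Sec8_DefAn}, \eqref{Sec8_DefBn} and the formula \eqref{Sec8_defKn}.
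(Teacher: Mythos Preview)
Your proof is correct and takes a genuinely different route from the paper's own argument. The paper first reduces to the case where $W$ is real valued on $\uc$, then invokes the Riesz--Fej\'er theorem to write $W(z)-\tau=|Q(z)|^2$ on $\uc$ for some $Q\in\mc{P}_n$ and a constant $\tau=\min_{\uc}W$, after which the quadratic Parseval identity \eqref{poly-parseval} is applied directly to $Q$ and to constants; the general case is handled by splitting $W$ into real and imaginary parts. Your approach bypasses Riesz--Fej\'er entirely: you decompose $W=P+\overline{Q}$ on $\uc$ with $P,Q\in\mc{P}_n$ and polarize \eqref{poly-parseval} to a bilinear identity, which you then apply with one argument equal to the constant $1$. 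This is more elementary and slightly shorter; the paper's choice, on the other hand, mirrors the $UU^*$ factorization theme that runs through Lemmas \ref{nonnegl1-tohe} and \ref{Sec3_lem16}, so it fits the surrounding narrative even if it is not the most economical route for this particular corollary.
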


\begin{proof} Assume first that $W$ is real valued on $\uc$, i.e. $a_k = \overline{a_{-k}}$. Let
$$\tau = \min_{z\in\uc} W(z).$$
Then $W(z) - \tau\ge 0$ and $z \mapsto W(z) -\tau\in \Gamma_{n}$. The Riesz-F\'{e}jer theorem implies that there exists a polynomial $Q \in \mc{P}_n$ such that
$$W(z) = |Q(z)|^2 + \tau$$
on the unit circle. Writing $\tau= |\tau_1|^2 - |\tau_2|^2$, $z=e^{2\pi i \theta}$, and using \eqref{poly-parseval} and \eqref{Sec8_equ_measures}, we obtain 
\begin{align*}
\int_{\uc} W(z)\,\d\vartheta(z) & = \int_{-\hh}^{\hh} \frac{W(z)}{|\varphi_{n+1}(z;\d\vartheta)|^2}\,\d\theta \\
& = \sum_{B_{n+1}(\zeta) =0} \frac{|Q(\zeta)|^2+|\tau_1|^2 - |\tau_2|^2}{K_n(\zeta,\zeta)}\\
& = \sum_{B_{n+1}(\zeta) =0} \frac{W(\zeta)}{K_n(\zeta,\zeta)}.
\end{align*}
The general statement follows by writing $W(z) = W_1(z)- iW_2(z)$, with $W_1(z) =  \sum_{k=-n}^n b_k z^k$ and $W_2(z) =  \sum_{k=-n}^n c_k z^k$, where $b_k = \frac12(a_k + \overline{a_{-k}})$ and $c_k =  \frac i 2 (a_k - \overline{a_{-k}})$. 
\end{proof}


\subsection{Extremal trigonometric polynomials I} 
\subsubsection{Main statement} The map $\theta \mapsto e^{2\pi i \theta}$ allows us to identify measures on $\R/\Z$ with measures on the unit circle $\uc$. Throughout the rest of this section we let $\vartheta$ be a nontrivial {\it even} probability measure on $\R/\Z$, and thus the corresponding nontrivial probability measure on the unit circle $\uc$ (that we keep calling $\vartheta$) is even with respect to angle zero, i.e. for a Borel set $A\subset\uc$ we have
$$\vartheta(A) = \vartheta(\overline{A}),$$
where $\overline{A} = \{\overline{z}:z\in A\}$. 

\smallskip

For $\lambda >0$, consider the $1$-periodic function $f_{\lambda}:\R \to \R$ defined by 
\begin{align*}
f_\lambda(\theta) = \frac{\cosh(\lambda(\theta-\lfloor \theta \rfloor-\frac12))}{\sinh(\frac{\lambda}{2})} = \sum_{j\in\ZZ} e^{-\lambda|\theta+j|}.
\end{align*}
This is the periodization of the exponential function $\mc{F}_{\lambda}(x) = e^{-\lambda |x|}$. We now address the problem of majorizing and minorizing  $f_{\lambda}$ by trigonometric polynomials of a given degree $n$, in a way to minimize the $L^1(\R/\Z,\d\vartheta)$-error. This problem with respect to the Lebesgue measure was treated in \cite[Section 6]{CV2}. Here, of course, a trigonometric polynomial $m(\theta)$ of degree at most $n$ is a $1$-periodic function of the form 
$$m(\theta) = \sum_{k=-n}^{n} a_k \, \!e^{2 \pi i k \theta},$$
where $a_k \in \C$. We say the $m(\theta)$ is a real trigonometric polynomial if it is real for real $\theta$. Let us denote the space of trigonometric polynomials of degree at most $n$ by $\Lambda_n$. 

\begin{theorem}\label{Sec8_Thm27}
Let $n \in \Z^+$ and $\vartheta$ be a nontrivial {\it even} probability measure on $\R/\Z$. Let $\varphi_{n+1}(z) = \varphi_{n+1}(z;\d\vartheta)$ be the $(n+1)$-th orthonormal polynomial on the unit circle with respect to this measure and consider $K_n, A_{n+1}, B_{n+1}$ as defined in \eqref{Sec8_defKn}, \eqref{Sec8_DefAn} and \eqref{Sec8_DefBn}. Let $\mc{A}_{n+1} =\big\{\xi\in \R/\Z: A_{n+1}(e^{2\pi i \xi}) =0\big\}$ and $\mc{B}_{n+1} = \big\{\xi\in\R/\Z: B_{n+1}(e^{2\pi i \xi}) =0\big\}$.

\smallskip

\begin{itemize}
\item[(i)] If $\ell \in \Lambda_n$ satisfies 
\begin{equation}\label{Sec8_ineq_min1}
\ell(\theta) \leq f_\lambda(\theta)
\end{equation} 
for all $\theta \in \R$ then
\begin{equation}\label{Sec8_ineq_min2}
\int_{\R/\Z} \ell(\theta) \, \d\vartheta(\theta) \leq \sum_{\xi \in \mc{A}_{n+1}} \frac{f_{\lambda}(\xi)}{K_{n}(e^{2\pi i \xi},e^{2\pi i \xi})}.
\end{equation}
Moreover, there exists a unique trigonometric polynomial $\theta \mapsto \ell_{\vartheta}(n, \lambda, \theta) \in \Lambda_n$ satisfying \eqref{Sec8_ineq_min1} for which the equality in \eqref{Sec8_ineq_min2} holds.

\smallskip

\item[(ii)] If $m \in \Lambda_n$ satisfies 
\begin{equation}\label{Sec8_ineq_maj1}
m(\theta) \geq f_\lambda(\theta)
\end{equation} 
for all $\theta \in \R$ then
\begin{equation}\label{Sec8_ineq_maj2}
\int_{\R/\Z} m(\theta) \, \d\vartheta(\theta) \geq \sum_{\xi \in \mc{B}_{n+1}} \frac{f_{\lambda}(\xi)}{K_{n}(e^{2\pi i \xi},e^{2\pi i \xi})}.
\end{equation}
Moreover, there exists a unique trigonometric polynomial $\theta \mapsto m_{\vartheta}(n, \lambda, \theta) \in \Lambda_n$ satisfying \eqref{Sec8_ineq_maj1} for which the equality in \eqref{Sec8_ineq_maj2} holds.
\end{itemize}
\end{theorem}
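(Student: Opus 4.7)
The plan is to mirror the proof of Theorem \ref{Intro_Thm1}, with the quadrature formula in Corollary \ref{Sec8_Cor26} playing the role of the reproducing kernel expansion used throughout Section \ref{debranges-section}. Since $\vartheta$ is even, Lemma \ref{Sec8_Lem25}(i) gives that $\varphi_{n+1}$ has real coefficients, so $A_{n+1}$ and $B_{n+1}$ do too, and the sets $\mc{A}_{n+1}$, $\mc{B}_{n+1}$ are symmetric about $0 \in \R/\Z$. A direct computation at $z=1$ shows that $0 \notin \mc{A}_{n+1}$ (because $A_{n+1}(1) = \varphi_{n+1}(1) \neq 0$) while $0 \in \mc{B}_{n+1}$; every other node is a smooth point of $f_\lambda$.

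First I would derive the optimality bounds. Each real $\ell \in \Lambda_n$ corresponds, via $z = e^{2\pi i \theta}$, to an element of $\Gamma_n$, so Corollary \ref{Sec8_Cor26} gives
\[
\int_{\R/\Z} \ell(\theta)\,\d\vartheta(\theta) \;=\; \sum_{\xi\in\mc{A}_{n+1}} \frac{\ell(\xi)}{K_n(e^{2\pi i \xi}, e^{2\pi i \xi})}.
\]
Since $K_n(w,w) > 0$ and $\ell(\xi) \le f_\lambda(\xi)$ for a minorant, inequality \eqref{Sec8_ineq_min2} follows. The bound \eqref{Sec8_ineq_maj2} is analogous, summing over $\mc{B}_{n+1}$.

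Next I would construct $\ell_\vartheta$ and $m_\vartheta$ explicitly. The natural candidates are even real trigonometric polynomials of degree at most $n$ performing Hermite interpolation of $f_\lambda$ at $\mc{A}_{n+1}$ (respectively $\mc{B}_{n+1}$), matching both value and derivative at every smooth node and only the value at $\xi=0$ in the majorant case. The $\pm\xi$ symmetry of the node set, together with the automatic vanishing of the derivative of an even trigonometric polynomial at $1/2$, makes the count come out to exactly $n+1$ conditions --- matching the dimension of the space of even trigonometric polynomials of degree at most $n$. To secure the global sign condition $\ell_\vartheta \le f_\lambda \le m_\vartheta$, I would adapt the construction of $L(F,\lambda,z)$ and $M(F,\lambda,z)$ from Section \ref{polya} to this finite-degree setting, taking $F$ to be the even Laguerre-P\'olya polynomial whose real zeros are the representatives of the interpolation set in $[-\tfrac12, \tfrac12)$. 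The monotonicity properties of the associated frequency function $g_c$ provided by Lemma \ref{g-sign-changes} deliver the sign inequality termwise on a fundamental domain, and a periodization argument based on $f_\lambda(\theta) = \sum_{j\in\Z} e^{-\lambda|\theta + j|}$ produces the required trigonometric polynomial.

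Finally I would establish uniqueness by a zero-counting argument. If $\ell_1, \ell_2 \in \Lambda_n$ are extremal minorants, then equality in the optimality step together with termwise nonnegativity forces $\ell_1(\xi) = \ell_2(\xi) = f_\lambda(\xi)$ for every $\xi \in \mc{A}_{n+1}$; since each such $\xi$ is a smooth point and a local maximum of $f_\lambda - \ell_i$, also $\ell_i'(\xi) = f_\lambda'(\xi)$. Hence $\ell_1 - \ell_2 \in \Lambda_n$ has double zeros at all $n+1$ points of $\mc{A}_{n+1}$, giving at least $2n+2$ zeros in $\R/\Z$ counted with multiplicity, which exceeds the maximum $2n$ allowed for a nonzero element of $\Lambda_n$; hence $\ell_1 \equiv \ell_2$. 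For majorants, the cusp of $f_\lambda$ at $0 \in \mc{B}_{n+1}$ produces only a simple zero of $m_1 - m_2$ at $0$, but the $n$ remaining smooth nodes contribute double zeros, totalling $2n+1$ zeros, still exceeding $2n$. The hard part of the whole scheme is the sign verification in the existence step, because translating the monotonicity properties of the frequency functions $g_c$ from Section \ref{polya} into a periodic framework requires carefully matching the Laplace-transform representation underlying those propositions with the periodization of $e^{-\lambda|x|}$ that produces $f_\lambda$.
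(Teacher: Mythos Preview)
Your optimality and uniqueness arguments are correct and match the paper's exactly: both use Corollary~\ref{Sec8_Cor26} for the inequality, and both count interpolation conditions (value plus derivative at each smooth node, value only at the cusp $\xi=0$ in the majorant case) to force uniqueness.

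The existence step, however, has a genuine gap in your description. You propose to take $F$ to be ``the even Laguerre--P\'olya polynomial whose real zeros are the representatives of the interpolation set in $[-\tfrac12,\tfrac12)$.'' That $F$ is an algebraic polynomial, hence of exponential type $0$; the construction of Section~\ref{polya} then yields a polynomial $L(F,\lambda,\cdot)$, and periodizing a polynomial via $\sum_j L(F,\lambda,\theta+j)$ diverges. The paper avoids this by choosing $F$ to be the \emph{periodic} Laguerre--P\'olya functions
\[
\frak{A}_{n+1}(\theta)=A_{n+1}(e^{2\pi i\theta})A_{n+1}(e^{-2\pi i\theta}),\qquad
\frak{B}_{n+1}(\theta)=-B_{n+1}(e^{2\pi i\theta})B_{n+1}(e^{-2\pi i\theta}),
\]
which are $1$-periodic, even, nonnegative trigonometric polynomials of degree $n+1$ (so exponential type $2\pi(n+1)$) with only double real zeros. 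Feeding these into Section~\ref{polya} gives entire functions $L(\frak{A}_{n+1},\lambda,\cdot)$ and $M(\frak{B}_{n+1},\lambda,\cdot)$ of exponential type at most $2\pi(n+1)$ satisfying $L\le e^{-\lambda|\cdot|}\le M$ on $\R$ with equality at the (infinitely many, periodic) zeros of $F$. Now Poisson summation (Lemma~\ref{Sec8_Lem28}) converges and yields trigonometric polynomials of degree at most $n$; because $F$ is $1$-periodic, the sign inequality survives the periodization pointwise, and the interpolation at the nodes is automatic. This is precisely the ``matching'' you flag as the hard part in your last paragraph, but the resolution is to build periodicity into $F$ from the start rather than to impose it afterward.

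One small slip: $B_{n+1}$ has purely imaginary coefficients, not real ones (since $\varphi_{n+1}^*-\varphi_{n+1}$ is real). The conclusion you draw from it---symmetry of $\mc{B}_{n+1}$---is still correct.
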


\subsubsection{Interpolation lemma} Before we move on to the proof of Theorem \ref{Sec8_Thm27} we present a lemma that connects the interpolation theory developed in Section \ref{polya} to the optimal approximations by trigonometric polynomials. In the lemma below and its proof we keep the notation already used in Section \ref{polya}. 

\begin{lemma}\label{Sec8_Lem28}
Let $F$ be an even and $1$-periodic Laguerre-P\'{o}lya function of exponential type $\tau(F)$. Assume that $F$ is non-constant.
\begin{itemize}
\item[(i)] If $F(0)>0$, define for $\theta \in \R$
$$\ell(F,\lambda,
\theta) = \sum_{j\in\ZZ} L(F,\lambda,\theta+j).$$
Then $\theta \mapsto \,\ell(F,\lambda, \theta)$ is a trigonometric polynomial of degree less than $\tau(F)/2\pi$ satisfying
\begin{align}\label{trig-ineq-ell}
F(\theta)\,\big\{f_\lambda(\theta) - \ell(F,\lambda,\theta)\big\} \ge 0
\end{align}
for all $\theta \in \R$ and
\begin{equation}\label{trig-ineq-ell-eq}
 \ell(F,\lambda,\xi) = f_\lambda(\xi)
 \end{equation}
for all $\xi \in \R$ with $F(\xi)=0$. 

\smallskip

\item[(ii)] If $F$ has a double zero at the origin and $F(\alpha_F/2) >0$, define for $\theta \in \R$
$$m(F,\lambda,
\theta) = \sum_{j\in\ZZ} M(F,\lambda,\theta+j).$$
Then $\theta \mapsto m(F,\lambda, \theta)$ is a trigonometric polynomial of degree less than $\tau(F)/2\pi$ satisfying
\begin{align}\label{trig-ineq-m}
F(\theta)\,\big\{m(F,\lambda,\theta) - f_\lambda(\theta) \big\} \ge 0
\end{align}
for all $\theta \in \R$ and
\begin{equation}\label{trig-ineq-m-eq}
m(F,\lambda,\xi) = f_\lambda(\xi)
\end{equation}
for all $\xi \in \R$ with $F(\xi)=0$. 
\end{itemize}
\end{lemma}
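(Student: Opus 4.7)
The plan is to verify, for part (i), that $\ell(F,\lambda,\theta)$ is well-defined, satisfies the sign inequality \eqref{trig-ineq-ell} and the interpolation identity \eqref{trig-ineq-ell-eq}, and is a trigonometric polynomial of the asserted degree; part (ii) then follows by a parallel argument, replacing $L$ with $M$ from Proposition \ref{Sec_ILP_Prop5}, using \eqref{majorant-ineq} and \eqref{majorant-interpolation} in place of \eqref{minorant-ineq} and \eqref{minorant-interpolation}, and invoking \eqref{Mf-integral-small-l} rather than \eqref{mf-integral-small-l} for the decay estimate.

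First I would address convergence and the pointwise properties. Since $F$ is a non-constant $1$-periodic Laguerre-P\'{o}lya function, the Hadamard factorization \eqref{hadamardproduct} combined with periodicity forces $F$ to have at least one real zero (otherwise $F(z)=F(0)e^{-az^2+bz}$ for some $a\ge 0$ and $b\in\R$, and $1$-periodicity reduces this to a constant), hence by periodicity infinitely many real zeros, so $\mathcal{N}(F)=\infty$ and Proposition \ref{Prop10}(i) yields
\[
\big|L(F,\lambda,x) - e^{-\lambda|x|}\big| \;\le\; c\,\lambda(1+\lambda)\,\frac{|F(x)|}{1+x^2}.
\]
As $F$ is continuous and periodic, $|F|$ is uniformly bounded on $\R$, so $\sum_j \big|L(F,\lambda,\theta+j) - e^{-\lambda|\theta+j|}\big|$ converges uniformly on compact subsets of $\R$; combined with the convergence of $\sum_j e^{-\lambda|\theta+j|}=f_\lambda(\theta)$ this defines $\ell(F,\lambda,\theta)$ as a continuous $1$-periodic function. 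The inequality \eqref{trig-ineq-ell} then follows termwise from \eqref{minorant-ineq} using $F(\theta+j)=F(\theta)$, and \eqref{trig-ineq-ell-eq} follows analogously from \eqref{minorant-interpolation}, since a zero of $F$ remains a zero of every integer translate.

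The main obstacle is recognizing $\ell(F,\lambda,\theta)$ as a trigonometric polynomial of degree strictly less than $\tau(F)/(2\pi)$, for which my strategy is Fourier-analytic. From \eqref{al-growth} the entire function $L(F,\lambda,z)$ has exponential type $\tau(L)\le \tau(F)$ and is bounded on $\R$, hence defines a tempered distribution; by the Paley-Wiener theorem for distributions (\cite[Theorem 1.7.7]{Hor}) its distributional Fourier transform $\widehat{L}$ is supported in $[-\tau(L)/(2\pi),\,\tau(L)/(2\pi)]$. On the other hand, the decomposition $L(F,\lambda,x)=e^{-\lambda|x|}+h(x)$ with $h\in L^1(\R)$ shows that $\widehat{L}$ coincides with the continuous function $\xi\mapsto 2\lambda/(\lambda^2+4\pi^2\xi^2)+\widehat{h}(\xi)$, continuity of $\widehat{h}$ being the Riemann-Lebesgue lemma. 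Since a continuous function agreeing with a compactly supported distribution must vanish off its support, and hence by continuity on the closure of its complement, $\widehat{L}$ vanishes on the closed set $|\xi|\ge \tau(L)/(2\pi)$. Applying classical Poisson summation to $h$, together with the explicit Fourier expansion $f_\lambda(\theta)=\sum_k\tfrac{2\lambda}{\lambda^2+4\pi^2 k^2}\,e^{2\pi i k\theta}$, then yields
\[
\ell(F,\lambda,\theta) \;=\; \sum_{k\in\Z} \widehat{L}(k)\, e^{2\pi i k \theta},
\]
with only finitely many nonzero terms, all satisfying $|k|<\tau(L)/(2\pi)\le \tau(F)/(2\pi)$. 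The strict inequality in the degree bound thus drops out cleanly from the continuity of $\widehat{L}$, which is the delicate point in this argument.
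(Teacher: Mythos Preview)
Your proof is correct and follows the same overall architecture as the paper's: Proposition~\ref{Prop10} for the decay of $L-e^{-\lambda|\cdot|}$, the type bound \eqref{al-growth}, Paley--Wiener to locate the Fourier support of $L$, and Poisson summation to periodize. The differences are in the supporting tools. The paper first shows $L\in L^1(\R)$, then invokes the Plancherel--P\'olya inequalities (their \eqref{Sec8_PP1}--\eqref{Sec8_PP2}) to conclude that $L$ is bounded (hence in $L^2$, so the $L^2$ Paley--Wiener theorem applies and $\widehat{L}$ is continuous with compact support) and that $L'\in L^1(\R)$ (hence $L$ has bounded variation, which justifies pointwise Poisson summation for $L$ itself). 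You instead bypass Plancherel--P\'olya entirely: you use the distributional Paley--Wiener theorem and recover continuity of $\widehat{L}$ from the decomposition $L=e^{-\lambda|\cdot|}+h$ with $h\in L^1$ via Riemann--Lebesgue, and you run Poisson on $h$ (for which the bound $|h(x)|\le C/(1+x^2)$ and $|\widehat{h}(k)|\le C/(1+k^2)$ suffice) together with the known Fourier expansion of $f_\lambda$. Your route is a bit more elementary in that it avoids Plancherel--P\'olya, at the cost of carrying the splitting $L=e^{-\lambda|\cdot|}+h$ through both the Paley--Wiener and Poisson steps; the paper's route is cleaner once those classical inequalities are on the table. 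Both obtain the strict degree bound by the same mechanism: continuity of $\widehat{L}$ forces it to vanish at the endpoints of its support.
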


\begin{proof} As is well known, the assumptions on $F$ imply that $F$ is a trigonometric polynomial of degree at most $\lfloor \tau(F)/(2\pi) \rfloor$. 

\smallskip

 {\it Part} (i). From \eqref{al-growth} and \eqref{mf-integral-small-l} (note that $\mc{N}(F) = \infty$) we observe that $z \mapsto L(F,\lambda,z)$ has exponential type at most $\tau(F)$ and belongs to $L^1(\R)$. The classical result of Plancherel and P\'{o}lya \cite{PP} gives us
\begin{equation}\label{Sec8_PP1}
\sum_{j=-\infty}^{\infty} |L(F,\lambda,\alpha_j)| \leq C_1(\varepsilon, \tau(F)) \int_{-\infty}^{\infty} |L(F,\lambda,x)|\,\dx
\end{equation}
for any increasing sequence $\{\alpha_j\}$ of real numbers such that $\alpha_{j+1} - \alpha_j \geq \varepsilon>0$, and 
\begin{equation}\label{Sec8_PP2}
\int_{-\infty}^{\infty} |L'(F,\lambda,x)|\,\dx \leq C_2(\tau(F)) \int_{-\infty}^{\infty} |L(F,\lambda,x)|\,\dx.
\end{equation}
Estimate \eqref{Sec8_PP1} shows that $L(F,\lambda, \cdot)$ is bounded on $\R$ and thus belongs to $L^2(\R)$. The Paley-Wiener theorem then implies that $\widehat{L}(F,\lambda, \cdot)$ is a continuous function supported on the interval $[-\frac{\tau(F)}{2\pi},\frac{\tau(F)}{2\pi}]$. The bound \eqref{Sec8_PP2} implies that $L(F,\lambda, \cdot)$ has bounded variation on $\R$ and thus the Poisson summation formula (see \cite[Volume 1, Chapter II, Section 13]{Z}) holds as a pointwise identity
\begin{equation}\label{Sec8_PP3} 
\sum_{j\in\ZZ} L(F,\lambda,\theta+j) = \sum_{k\in\ZZ} \widehat{L}(F,\lambda, k)\,e^{2\pi i k \theta}.
\end{equation}
From \eqref{Sec8_PP1}, it follows that the sum on the left of \eqref{Sec8_PP3} is absolutely convergent, and this shows that $\ell(F,\lambda,
\theta)$ is a trigonometric polynomial of degree less than $\tau(F)/2\pi$. From \eqref{minorant-ineq} and \eqref{minorant-interpolation} we obtain \eqref{trig-ineq-ell} and \eqref{trig-ineq-ell-eq}.

\smallskip

\noindent{\it Part} (ii). The majorant part is analogous.

\end{proof}

\subsubsection{Proof of Theorem \ref{Sec8_Thm27}}

\subsubsection*{Existence} Define
\begin{align*}
\frak{A}_{n+1}(\theta) &= A_{n+1}(e^{2\pi i \theta}) \,A_{n+1}(e^{-2\pi i \theta}),\\
\frak{B}_{n+1}(\theta) &= - B_{n+1}(e^{2\pi i \theta})\, B_{n+1}(e^{-2\pi i \theta}).
\end{align*}
Observe that $\frak{A}_{n+1}$ and $\frak{B}_{n+1}$ are even trigonometric polynomials of degree $n+1$. Since $\varphi_{n+1}$ has real coefficients, $A_{n+1}$ has real coefficients and $B_{n+1}$ has purely imaginary coefficients. Thus 
\begin{align*}
A_{n+1}(e^{-2\pi i \theta})& = \overline {A_{n+1}(e^{2\pi i \theta})},\\
B_{n+1}(e^{-2\pi i \theta})& = -\overline {B_{n+1}(e^{2\pi i \theta})},
\end{align*}
for $\theta \in \R$, and we see that $\frak{A}_{n+1}$ and $\frak{B}_{n+1}$ are nonnegative on the real axis. Moreover, since $A_{n+1}$ and $B_{n+1}$ have only simple zeros on the unit circle, $\frak{A}_{n+1}$ and $\frak{B}_{n+1}$ have only double zeros on the real line (and thus they are entire functions in the Laguerre-P\'{o}lya class). From the fact that $A_{n+1}(1)\ne0$ and $B_{n+1}(1)=0$ we see that $\frak{A}_{n+1}(0) \neq 0$ and $\frak{B}_{n+1}$ has a double zero at the origin.

\smallskip

Using the construction of Lemma \ref{Sec8_Lem28} we define 
\begin{align*}
\ell_{\vartheta}(n,\lambda, \theta)&:= \ell(\frak{A}_{n+1},\lambda, \theta),\\
m_{\vartheta}(n,\lambda, \theta)&:= m(\frak{B}_{n+1},\lambda, \theta).
\end{align*}
It follows from \eqref{trig-ineq-ell} and \eqref{trig-ineq-m} that
$$\ell_{\vartheta}(n,\lambda, \theta) \leq f_{\lambda}(\theta) \leq m_{\vartheta}(n,\lambda, \theta)$$
for all $\theta \in \R$. The equalities in \eqref{Sec8_ineq_min2} and \eqref{Sec8_ineq_maj2} follow from \eqref{trig-ineq-ell-eq}, \eqref{trig-ineq-m-eq} and Corollary \ref{Sec8_Cor26}.

\subsubsection*{Optimality and uniqueness} If $\ell \in \Lambda_n$ satisfies $\ell(\theta) \leq f_\lambda(\theta)$ for all $\theta \in \R$, then by Corollary \ref{Sec8_Cor26} we have
\begin{align*}
\int_{\R/\Z} \ell(\theta) \, \d\vartheta(\theta)   =   \sum_{\xi \in \mc{A}_{n+1}} \frac{\ell(\xi)}{K_{n}(e^{2\pi i \xi},e^{2\pi i \xi})}  \leq \sum_{\xi \in \mc{A}_{n+1}} \frac{f_{\lambda}(\xi)}{K_{n}(e^{2\pi i \xi},e^{2\pi i \xi})}.
\end{align*}
If equality happens we must have $\ell(\xi) = f_{\lambda}(\xi)$ and $\ell'(\xi) = f_{\lambda}'(\xi)$ for all $\xi \in \mc{A}_{n+1}$. This gives us $2n+2$ conditions that completely determine a trigonometric polynomial of degree at most $n$. The majorant case is analogous, with the detail that $0 \in \mc{B}_{n+1}$ and $f_{\lambda}$ is not differentiable at $\xi =0$. In this case, these $2n+1$ conditions are still sufficient to determine a trigonometric polynomial of degree at most $n$. This concludes the proof.


\subsection{Extremal trigonometric polynomials II - Integrating the parameter}
We consider here two classes of nonnegative Borel measures $\varsigma$  on $(0,\infty)$. For the minorant problem we require that
\begin{equation}\label{Sec8_meas1}
\int_0^{\infty} \lambda\,e^{-a\lambda}\,\d\varsigma(\lambda) < \infty
\end{equation}
for any $a >0$, whereas for the majorant problem we require the more restrictive condition
\begin{equation}\label{Sec8_meas2}
\int_0^{\infty} \frac{\lambda}{1+\lambda} \,\d\varsigma(\lambda) < \infty.
\end{equation}
In this subsection we address the problem of majorizing and minorizing the periodic function
\begin{equation}\label{def_h_varsigma}
h_{\varsigma}(\theta) := \int_0^{\infty} \big\{f_{\lambda}(\theta) - f_{\lambda}(\hh)\big\}\,\d\varsigma(\lambda)
\end{equation}
by trigonometric polynomials of a given degree $n$, minimizing the $L^1(\R/\Z, \d\vartheta)$-error. Note the convenient subtraction of the term $f_{\lambda}(\hh)$ to generate a better decay rate in $\lambda$ as $\lambda \to 0$. If $\varsigma$ satisfies \eqref{Sec8_meas1}, $h_{\varsigma}$ is well-defined for all $\theta \notin \Z$ (it might blow up at $\theta \in \Z$), and if $\varsigma$ satisfies \eqref{Sec8_meas2}, $h_{\varsigma}$ is well-defined for all $\theta \in \R$. We shall prove the following result.

\begin{theorem}\label{Sec8_Thm29}
Let $n \in \Z^{+}$ and $\vartheta$ be a nontrivial {\it even} probability measure on $\R/\Z$. Let $\varphi_{n+1}(z) = \varphi_{n+1}(z;\d\vartheta)$ be the $(n+1)$-th orthonormal polynomial on the unit circle with respect to this measure and consider $K_n, A_{n+1}, B_{n+1}$ as defined in \eqref{Sec8_defKn}, \eqref{Sec8_DefAn} and \eqref{Sec8_DefBn}. Let $\mc{A}_{n+1} = \big\{\xi\in \R/\Z: A_{n+1}(e^{2\pi i \xi}) =0\big\}$ and $\mc{B}_{n+1} = \big\{\xi\in\R/\Z: B_{n+1}(e^{2\pi i \xi}) =0\big\}$.
\smallskip
\begin{itemize}
\item[(i)] Let $\varsigma$ satisfy \eqref{Sec8_meas1}. If $\ell \in \Lambda_n$ satisfies 
\begin{equation}\label{Sec8_ineq_min1-thm29}
\ell(\theta) \leq h_\varsigma(\theta)
\end{equation} 
for all $\theta \in \R$ then
\begin{equation}\label{Sec8_ineq_min2-thm29}
\int_{\R/\Z} \ell(\theta) \, \d\vartheta(\theta) \leq \sum_{\xi \in \mc{A}_{n+1}} \frac{h_{\varsigma}(\xi)}{K_{n}(e^{2\pi i \xi},e^{2\pi i \xi})}.
\end{equation}
Moreover, there exists a unique trigonometric polynomial $\theta \mapsto \ell_{\vartheta}(n, \varsigma, \theta) \in \Lambda_n$ satisfying \eqref{Sec8_ineq_min1-thm29} for which the equality in \eqref{Sec8_ineq_min2-thm29} holds.

\smallskip

\item[(ii)] Let $\varsigma$ satisfy \eqref{Sec8_meas2}. If $m \in \Lambda_n$ satisfies 
\begin{equation}\label{Sec8_ineq_maj1-thm29}
m(\theta) \geq h_{\varsigma}(\theta)
\end{equation} 
for all $\theta \in \R$ then
\begin{equation}\label{Sec8_ineq_maj2-thm29}
\int_{\R/\Z} m(\theta) \, \d\vartheta(\theta) \geq \sum_{\xi \in \mc{B}_{n+1}} \frac{h_{\varsigma}(\xi)}{K_{n}(e^{2\pi i \xi},e^{2\pi i \xi})}.
\end{equation}
Moreover, there exists a unique trigonometric polynomial $\theta \mapsto m_{\vartheta}(n, \varsigma, \theta) \in \Lambda_n$ satisfying \eqref{Sec8_ineq_maj1-thm29} for which the equality in \eqref{Sec8_ineq_maj2-thm29} holds.
\end{itemize}
\end{theorem}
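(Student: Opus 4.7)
The plan is to mirror the strategy of Theorem~\ref{Thm2}: average the base-case extremals of Theorem~\ref{Sec8_Thm27} against $\d\varsigma(\lambda)$, and use the quadrature formula of Corollary~\ref{Sec8_Cor26} to derive optimality and uniqueness. Concretely, I will define
\begin{align*}
\ell_{\vartheta}(n,\varsigma,\theta) &:= \int_0^{\infty}\bigl\{\ell_{\vartheta}(n,\lambda,\theta)-f_{\lambda}(\hh)\bigr\}\,\d\varsigma(\lambda),\\
m_{\vartheta}(n,\varsigma,\theta) &:= \int_0^{\infty}\bigl\{m_{\vartheta}(n,\lambda,\theta)-f_{\lambda}(\hh)\bigr\}\,\d\varsigma(\lambda),
\end{align*}
the subtraction of $f_{\lambda}(\hh)$ mirroring the regularization in the definition of $h_{\varsigma}$ and generating enough decay as $\lambda\to 0^+$ to make the $\varsigma$-integrals converge.

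To see these integrals converge and lie in $\Lambda_n$, I will use the Hermite interpolation representation of $\ell_{\vartheta}(n,\lambda,\cdot)$: since this polynomial is characterized in $\Lambda_n$ by the conditions $\ell_{\vartheta}(n,\lambda,\xi)=f_{\lambda}(\xi)$ and $\ell_{\vartheta}'(n,\lambda,\xi)=f_{\lambda}'(\xi)$ for $\xi\in\mc{A}_{n+1}$, there exist fundamental polynomials $\alpha_{\xi},\beta_{\xi}\in\Lambda_n$ (depending only on $\vartheta$ and $n$), with $\sum_{\xi}\alpha_{\xi}\equiv 1$, such that
$$\ell_{\vartheta}(n,\lambda,\theta)-f_{\lambda}(\hh)=\sum_{\xi\in\mc{A}_{n+1}}\Bigl\{\bigl(f_{\lambda}(\xi)-f_{\lambda}(\hh)\bigr)\alpha_{\xi}(\theta)+f_{\lambda}'(\xi)\beta_{\xi}(\theta)\Bigr\}.$$
Since $A_{n+1}(1)=\varphi_{n+1}(1)\neq 0$, every $\xi\in\mc{A}_{n+1}$ is nonzero, and elementary Taylor estimates for $f_{\lambda}$ yield $|f_{\lambda}(\xi)-f_{\lambda}(\hh)|+|f_{\lambda}'(\xi)|=O_{\xi}(\lambda)$ as $\lambda\to 0^+$ with exponential decay in $\lambda$ as $\lambda\to\infty$; condition \eqref{Sec8_meas1} then secures the required integrability, giving
$$\ell_{\vartheta}(n,\varsigma,\theta)=\sum_{\xi\in\mc{A}_{n+1}}\bigl\{h_{\varsigma}(\xi)\alpha_{\xi}(\theta)+h_{\varsigma}'(\xi)\beta_{\xi}(\theta)\bigr\}\in\Lambda_n.$$
For the majorant I will use the analogous representation over $\mc{B}_{n+1}$ (with no derivative condition imposed at $\xi=0$); the only new ingredient needed at $\xi=0$ is the bound $0\le f_{\lambda}(0)-f_{\lambda}(\hh)\le C\lambda/(1+\lambda)$, which is integrable against $\varsigma$ by \eqref{Sec8_meas2}.

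Integrating the pointwise inequalities of Theorem~\ref{Sec8_Thm27} against $\d\varsigma$ will then produce $\ell_{\vartheta}(n,\varsigma,\theta)\le h_{\varsigma}(\theta)\le m_{\vartheta}(n,\varsigma,\theta)$, and the Hermite formula above immediately gives the interpolation identities $\ell_{\vartheta}(n,\varsigma,\xi)=h_{\varsigma}(\xi)$ on $\mc{A}_{n+1}$ and $m_{\vartheta}(n,\varsigma,\xi)=h_{\varsigma}(\xi)$ on $\mc{B}_{n+1}$. For optimality, if $\ell\in\Lambda_n\subset\Gamma_n$ satisfies $\ell\le h_{\varsigma}$, Corollary~\ref{Sec8_Cor26} will give
$$\int_{\R/\Z}\ell(\theta)\,\d\vartheta(\theta)=\sum_{\xi\in\mc{A}_{n+1}}\frac{\ell(\xi)}{K_n(e^{2\pi i\xi},e^{2\pi i\xi})}\le\sum_{\xi\in\mc{A}_{n+1}}\frac{h_{\varsigma}(\xi)}{K_n(e^{2\pi i\xi},e^{2\pi i\xi})},$$
with equality forcing $\ell(\xi)=h_{\varsigma}(\xi)$ at each $\xi\in\mc{A}_{n+1}$; since $\ell\le h_{\varsigma}$ and $h_{\varsigma}$ is smooth at these (nonzero) points, tangency then gives $\ell'(\xi)=h_{\varsigma}'(\xi)$, and the Hermite uniqueness argument from Theorem~\ref{Sec8_Thm27} identifies $\ell=\ell_{\vartheta}(n,\varsigma,\cdot)$. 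The majorant case is identical, with only the value condition imposed at $\xi=0\in\mc{B}_{n+1}$, giving the $2n+1$ conditions that uniquely determine $m\in\Lambda_n$.

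The main obstacle will be the careful interchange of integration together with the verification of integrability for the various slowly-decaying terms in $\lambda$; however, once the Hermite decomposition confines the $\lambda$-dependence to finitely many explicit quantities (values and first derivatives of $f_{\lambda}$ at fixed nonzero points, plus $f_{\lambda}(\hh)$ and $f_{\lambda}(0)$), the entire convergence analysis reduces to the same elementary estimates that make $h_{\varsigma}$ itself well-defined under \eqref{Sec8_meas1} or \eqref{Sec8_meas2}.
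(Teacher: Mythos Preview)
Your proposal is correct and follows essentially the same approach as the paper: define the extremal for $h_\varsigma$ by integrating the shifted base extremal $\ell_{\vartheta}(n,\lambda,\cdot)-f_{\lambda}(\hh)$ against $\d\varsigma(\lambda)$, reduce the convergence question to bounds on the interpolation data $f_{\lambda}(\xi)-f_{\lambda}(\hh)$ and $f_{\lambda}'(\xi)$ at the nodes (using that $0\notin\mc{A}_{n+1}$ for the minorant and handling $\xi=0\in\mc{B}_{n+1}$ separately for the majorant), and then invoke Corollary~\ref{Sec8_Cor26} for optimality and the Hermite count for uniqueness. The only cosmetic difference is that the paper expresses the linear algebra in terms of the Fourier coefficients $a_k(n,\lambda)$ (dropping one of the $2n+2$ equations to invert the system), whereas you phrase it via Hermite fundamental polynomials $\alpha_{\xi},\beta_{\xi}$; just be aware that since $\dim\Lambda_n=2n+1$ while there are $2n+2$ Hermite conditions on $\mc{A}_{n+1}$, your $\alpha_{\xi},\beta_{\xi}$ are not uniquely determined---in practice one takes $\beta_{\xi_0}\equiv 0$ for some chosen $\xi_0$, exactly mirroring the paper's ``drop one equation'' step.
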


\begin{proof} {\it Part} (i). Write 
$$h_{\lambda}(\theta) = f_{\lambda}(\theta) -f_{\lambda}(\hh).$$
From Theorem \ref{Sec8_Thm27}, the optimal trigonometric polynomial of degree at most $n$ that minorizes $h_{\lambda}$ is 
$$\widetilde{\ell}_{\vartheta}(n,\lambda,\theta) := \ell_{\vartheta}(n,\lambda,\theta) - f_{\lambda}(\hh).$$
Let us write
$$\widetilde{\ell}_{\vartheta}(n,\lambda,\theta) = \sum_{k=-n}^{n} a_k(n,\lambda)\, e^{2\pi i k \theta},$$
where $a_k = a_{k,\vartheta}$. From the interpolation properties we have, for all $\lambda >0$, 
\begin{align*}
\widetilde{\ell}_{\vartheta}(n,\lambda,\xi) &=  \sum_{k=-n}^{n} a_k(n,\lambda)\, e^{2\pi i k \xi}= h_{\lambda}(\xi)\\
\widetilde{\ell}_{\vartheta}\,'(n,\lambda,\xi) & = \sum_{k=-n}^{n} 2\pi i k \,a_k(n,\lambda)\, e^{2\pi i k \xi} = h'_{\lambda}(\xi)
\end{align*}
for all $\xi \in \mc{A}_{n+1}$. Since we have $2n+1$ coefficients $a_k$'s and $2n+2$ equations, this is an overdetermined system. We also know that this interpolation problem has a unique solution, so we can drop the last equation and invert the coefficient matrix to obtain the $a_k$'s as a function of the $h_{\lambda}(\xi)$ and $h'_{\lambda}(\xi)$. Since 
$$h_{\lambda}(\xi), h'_{\lambda}(\xi) \ll_{\xi}  \min\big\{\lambda, e^{-\lambda |\xi|}\big\}$$
for any $\hh \leq \xi< \hh$, the $a_k$'s will satisfy the same growth conditions. Since $0 \notin \mc{A}_{n+1}$ we conclude that each coefficient $a_k(n,\lambda)$ is absolutely integrable with respect to $\d\varsigma(\lambda)$ and we define
\begin{equation*}
\ell_{\vartheta}(n,\varsigma, \theta) := \int_0^{\infty} \widetilde{\ell}_{\vartheta}(n,\lambda, \theta)\,\d\varsigma(\lambda).
\end{equation*}
It is clear that 
$$\ell_{\vartheta}(n,\varsigma, \theta) \leq h_{\varsigma}(\theta)$$
for all $\theta \in \R$ and that 
$$\ell_{\vartheta}(n,\varsigma, \xi) =  h_{\varsigma}(\xi)$$
for all $\xi \in \mc{A}_{n+1}$. The optimality (and uniqueness) of this minorant follows from Corollary \ref{Sec8_Cor26} as in the proof of Theorem \ref{Sec8_Thm27}.

\smallskip

\noindent{\it Part} (ii). The majorant part is essentially analogous, just observing that $0 \in \mc{B}_{n+1}$, which justifies the more restrictive condition \eqref{Sec8_meas2} on the measure $\varsigma$.
\end{proof}

We remark that the particular choice $\d\varsigma(\lambda) = \lambda^{-1}\,\d\lambda$ yields
\begin{align*}
h_{\varsigma}(\theta) &=  \int_0^{\infty} \big\{f_{\lambda}(\theta) - f_{\lambda}(\hh)\big\}\, \lambda^{-1}\,\d\lambda\\
& =  \int_0^{\infty} \big\{f_{\lambda}(\theta) - \tfrac{2}{\lambda}\big\}\, \lambda^{-1}\,\d\lambda - \int_0^{\infty} \big\{f_{\lambda}(\hh) - \tfrac{2}{\lambda}\big\}\, \lambda^{-1}\,\d\lambda\\
&  = -\log|2 \sin \pi \theta| + \log 2.
\end{align*}
The function $\theta \mapsto -\log|2 \sin \pi \theta|$ is the harmonic conjugate of the sawtooth function treated in \cite{LV}. Theorem \ref{Sec8_Thm29} provides an extremal minorant for this function with respect to any even nontrivial probability measure $\vartheta$ on $\R/\Z$, generalizing the work done in \cite[Section 6]{CV2}. 

\subsection{Polynomial majorants on the sphere} 
\subsubsection {Main statement} Let $N\geq 2$. Let $\v \in \SS^{N-1} \subset \R^N$ be a unit vector and $h_{\varsigma}$ be defined by \eqref{def_h_varsigma}. For $\x \in \SS^{N-1}$ we consider here the problem of majorizing and minorizing the function (symmetric with respect to the $\v$-axis)
$$\x \mapsto h_{\varsigma}\left(\tfrac{1}{2\pi} \arccos( \x\cdot \v)\right)$$
by polynomials of degree at most $n$ with real coefficients (in $N$ variables) in a way to minimize the $L^1(\SS^{N-1}, w( \x\cdot \v)\,\d\sigma_N(\x))$-error. Here $\d\sigma_N$ denotes the surface measure on the sphere $\SS^{N-1}$, normalized so that $\SS^{N-1}$ has measure one, and $\x \mapsto w( \x\cdot \v)$ is an arbitrary nonnegative weight in $L^1(\SS^{N-1}, \d\sigma_N)$ with norm one (i.e. $w( \x\cdot \v)\,\d\sigma_N(\x)$ is still a probability measure on $\SS^{N-1}$). The corresponding problem for characteristic functions of spherical caps with respect to Jacobi measures was treated in \cite{LV}.

\smallskip

For a function $\x \mapsto F \left(\tfrac{1}{2\pi} \arccos( \x\cdot \v)\right)$, where $F:\R/\Z \to \C$ is a even function, a change of variables gives us 
\begin{align}
\begin{split}\label{Sec8_change_var}
\int_{\SS^{N-1}} F&  \left(\tfrac{1}{2\pi} \arccos( \x\cdot \v)\right)w( \x\cdot \v)\, \d\sigma_N(\x)\\
& \ \ \ =C_N \int_{-\hh}^{\hh} F(\theta)\,w(\cos 2\pi \theta)\,|\sin 2\pi \theta|^{N-2}\,\d\theta,
\end{split}
\end{align}
where $C_N =  \sqrt{\pi}\,\Gamma\left(\frac{N}{2}\right) \Gamma\left(\frac{N-1}{2}\right)^{-1}$, and we notice the connection with the one-dimensional problem. To state the next result consider the even nontrivial probability measure $\vartheta_{N,w}$ on $\R/\Z$ given by 
$$\d\vartheta_{N,w}(\theta) = C_N \,w(\cos 2\pi \theta)\,|\sin 2\pi \theta|^{N-2}\,\d\theta$$
and denote by $u^{-}_{\vartheta_{N,w}}(n,\varsigma)$ the expression on the right-hand side of \eqref{Sec8_ineq_min2-thm29} and by $u^{+}_{\vartheta_{N,w}}(n,\varsigma)$ the expression on the right-hand side of \eqref{Sec8_ineq_maj2-thm29} associated to this measure.

\begin{theorem}\label{Sec8_Thm30}
Let $N\geq2$, $n \in \Z^{+}$ and $h_{\varsigma}$ be defined by \eqref{def_h_varsigma}.
\smallskip
\begin{itemize}
\item[(i)] Let $\varsigma$ satisfy \eqref{Sec8_meas1}. If $\L(\x)$ is a polynomial of degree at most $n$ with real coefficients such that 
\begin{equation}\label{Sec8_ineq_min-thm30}
\L(\x) \leq h_{\varsigma}\left(\tfrac{1}{2\pi} \arccos( \x\cdot \v)\right)
\end{equation}
for all $\x \in \SS^{N-1}$ then 
\begin{equation}\label{Sec8_ineq_min2-thm30}
\int_{\SS^{N-1}} \L(\x) \,w( \x\cdot \v)\, \d\sigma_N(\x) \leq u^{-}_{\vartheta_{N,w}}(n,\varsigma).
\end{equation}
The equality is attained for $\L(\x) = \ell_{\vartheta_{N,w}}\left(n,\varsigma, \tfrac{1}{2\pi}\arccos (\x \cdot \v)\right)$.

\smallskip

\item[(ii)] Let $\varsigma$ satisfy \eqref{Sec8_meas2}. If $\M(\x)$ is a polynomial of degree at most $n$ with real coefficients such that 
\begin{equation}\label{Sec8_ineq_maj-thm30}
\M(\x) \geq h_{\varsigma}\left(\tfrac{1}{2\pi} \arccos( \x\cdot \v)\right)
\end{equation}
for all $\x \in \SS^{N-1}$ then 
\begin{equation}\label{Sec8_ineq_maj2-thm30}
\int_{\SS^{N-1}} \M(\x) \,w( \x\cdot \v)\, \d\sigma_N(\x) \geq u^{+}_{\vartheta_{N,w}}(n,\varsigma).
\end{equation}
The equality is attained for $\M(\x) = m_{\vartheta_{N,w}}\left(n,\varsigma, \tfrac{1}{2\pi}\arccos (\x \cdot \v)\right)$.
\end{itemize}
\end{theorem}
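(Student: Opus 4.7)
\textbf{Proof plan for Theorem \ref{Sec8_Thm30}.} The strategy is to reduce the polynomial approximation problem on the sphere to the one-dimensional periodic problem solved in Theorem \ref{Sec8_Thm29} via symmetrization around the $\v$-axis and the classical identification of even trigonometric polynomials with polynomials in $\cos 2\pi\theta$.

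First I would set up the dictionary between objects on $\SS^{N-1}$ symmetric about $\v$ and even trigonometric polynomials on $\R/\Z$. Let $SO(N-1)_{\v}$ denote the subgroup of $SO(N)$ fixing $\v$, equipped with normalized Haar measure $\d\sigma_\v$. Given a polynomial $\L:\R^N\to\R$ of degree $\leq n$ satisfying \eqref{Sec8_ineq_min-thm30}, define the symmetrization
\begin{equation*}
\widetilde{\L}(\x) = \int_{SO(N-1)_{\v}} \L(M\x)\,\d\sigma_\v(M).
\end{equation*}
Since $M\x\cdot\v = \x\cdot\v$ and $|M\x|=|\x|$, the inequality \eqref{Sec8_ineq_min-thm30} is preserved pointwise, and by the $SO(N)$-invariance of $\d\sigma_N$ together with Fubini the weighted integral $\int_{\SS^{N-1}} \L\, w(\x\cdot\v)\,\d\sigma_N$ is unchanged. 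Moreover $\widetilde{\L}$ is a polynomial of degree $\leq n$ that is invariant under $SO(N-1)_{\v}$, hence is a polynomial in the two quantities $\x\cdot\v$ and $|\x|^2-(\x\cdot\v)^2$ on all of $\R^N$. Restricting to $\SS^{N-1}$, where $|\x|^2=1$, this yields $\widetilde{\L}(\x)=P(\x\cdot\v)$ for a single-variable polynomial $P$ of degree $\leq n$.

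Next I would perform the trigonometric substitution. Writing $\x\cdot\v=\cos 2\pi\theta$ with $\theta\in[-\hh,\hh]$ and using the Chebyshev identity $\cos(2\pi k\theta)=T_k(\cos 2\pi\theta)$, the function $t(\theta):=P(\cos 2\pi\theta)$ is an even trigonometric polynomial of degree $\leq n$, and every such trigonometric polynomial arises in this way. The change of variables \eqref{Sec8_change_var} then gives
\begin{equation*}
\int_{\SS^{N-1}} \widetilde{\L}(\x)\,w(\x\cdot\v)\,\d\sigma_N(\x) = \int_{-\hh}^{\hh} t(\theta)\,\d\vartheta_{N,w}(\theta),
\end{equation*}
and the inequality \eqref{Sec8_ineq_min-thm30} becomes $t(\theta)\leq h_{\varsigma}(\theta)$ for all $\theta\in\R/\Z$. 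Applying Theorem \ref{Sec8_Thm29}(i) to $t$ with the measure $\vartheta_{N,w}$ (which is an even nontrivial probability measure) yields \eqref{Sec8_ineq_min2-thm30}; part (ii) is entirely analogous using Theorem \ref{Sec8_Thm29}(ii), noting that condition \eqref{Sec8_meas2} is what is required for the majorant case.

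For the equality/attainment statement, I need to check that the extremal trigonometric polynomials $\ell_{\vartheta_{N,w}}(n,\varsigma,\cdot)$ and $m_{\vartheta_{N,w}}(n,\varsigma,\cdot)$ from Theorem \ref{Sec8_Thm29} are \emph{even} in $\theta$, so that they correspond via $t(\theta)=P(\cos 2\pi\theta)$ to polynomials of degree $\leq n$ in $\x$. This will follow from the uniqueness asserted in Theorem \ref{Sec8_Thm29}: since $h_{\varsigma}(-\theta)=h_{\varsigma}(\theta)$ and $\d\vartheta_{N,w}$ is even, the reflection $\theta\mapsto\ell_{\vartheta_{N,w}}(n,\varsigma,-\theta)$ is also an extremal minorant, hence coincides with $\ell_{\vartheta_{N,w}}(n,\varsigma,\theta)$; the same argument handles $m_{\vartheta_{N,w}}$. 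The main obstacle is really this evenness/degree-matching bookkeeping --- making sure the symmetrization keeps the polynomial degree at most $n$ when passed through the sphere restriction and the Chebyshev identification, and that the uniqueness in Theorem \ref{Sec8_Thm29} genuinely forces the extremizers to lie in the zonal subclass so that they pull back to polynomials on $\SS^{N-1}$.
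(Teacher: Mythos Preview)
Your proposal is correct and follows essentially the same route as the paper: symmetrize over the stabilizer of $\v$ to pass to a polynomial in $\x\cdot\v$, identify this with an even trigonometric polynomial via $\x\cdot\v=\cos 2\pi\theta$, invoke the change of variables \eqref{Sec8_change_var}, and apply Theorem~\ref{Sec8_Thm29}. The only stylistic difference is that the paper packages the invariant-theory and Chebyshev steps as citations to a lemma of Li--Vaaler (Lemma~\ref{Sec8_Lem31}) and applies part~(i) of that lemma directly to $\ell_{\vartheta_{N,w}}$ and $m_{\vartheta_{N,w}}$ without spelling out why these extremizers are even; your argument via the uniqueness clause of Theorem~\ref{Sec8_Thm29} is a clean way to supply that missing justification.
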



\subsubsection{Symmetrization lemma} Before we proceed to the proof of Theorem \ref{Sec8_Thm30} we present a symmetrization lemma from \cite{LV} in the same spirit of Lemma \ref{Lemma19_HV} above. Let $SO_{\v}(N)$ be the topological subgroup of all rotations of \,$\SS^{N-1}$ (real orthogonal $N \times N$ matrices $M$ with $\det M =1$) leaving $\v$ fixed. Let $\varrho$ be its left-invariant (and also right-invariant since $SO_{\v}(N)$ is compact) Haar measure, normalized so that $\varrho(SO_{\v}(N)) =1$. For every polynomial $\F(\x)$ define
\begin{equation}\label{Sec8_symm}
\breve{\F}(\x) = \int_{SO_{\v}(N)} \F(M\x)\,\d\varrho(M).
\end{equation}

\begin{lemma}\label{Sec8_Lem31}
The following propositions hold.
\smallskip
\begin{enumerate}
\item[(i)] Let $F(\theta)$ be a real trigonometric polynomial of degree $n$. If $F(\theta)$ is an even function of $\theta$, then $F \left(\tfrac{1}{2\pi} \arccos( \x\cdot \v)\right)$ is a polynomial of $\x \cdot \v$ of degree $n$ with real coefficients.
\smallskip
\item[(ii)] If $\F(\x)$ is a polynomial of $x_1, x_2, \ldots, x_N$ of degree $n$ with real coefficients, then $\breve{\F}(\x)$ is a polynomial of $\x\cdot\v$ of degree $n$ with real coefficients when $\x$ is restricted to the sphere $\SS^{N-1}$. Moreover, the identity
\begin{equation}\label{Sec8_eq30.5}
\int_{\SS^{N-1}} \breve{\F}(\x)\,w( \x\cdot \v)\, \d\sigma_N(\x) = \int_{\SS^{N-1}} \F(\x)\,w( \x\cdot \v)\, \d\sigma_N(\x)
\end{equation}
holds.
\end{enumerate}
\end{lemma}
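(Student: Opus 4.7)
For part (i), I would first expand the even real trigonometric polynomial $F$ as
\begin{align*}
F(\theta) = \frac{a_0}{2} + \sum_{k=1}^n a_k \cos(2\pi k \theta), \qquad a_k\in \R,
\end{align*}
and invoke the Chebyshev polynomials of the first kind, $T_k(\cos\phi) = \cos(k\phi)$, to rewrite $F(\theta)=\tfrac{a_0}{2}+\sum_{k=1}^n a_k\,T_k(\cos 2\pi\theta)$. Substituting $\theta=\tfrac{1}{2\pi}\arccos(\x\cdot\v)$ replaces $\cos 2\pi\theta$ by $\x\cdot\v$, producing a polynomial of $\x\cdot\v$ of degree at most $n$ with real coefficients.

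For part (ii), I would proceed in three steps. \emph{Step 1} (polynomiality of $\breve{\F}$): each component $(M\x)_i=\sum_j M_{ij}x_j$ is linear in $\x$, so every monomial $(M\x)^{\alpha}$ is a polynomial in $\x$ of degree $|\alpha|$ whose coefficients are polynomials in the entries of $M$ (hence bounded on the compact group $SO_{\v}(N)$). Integrating over $\varrho$ therefore preserves polynomiality and the total degree does not exceed $n$.

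\emph{Step 2} (dependence on $\x\cdot\v$ on the sphere): decompose $\x=t\v+\y$ with $t=\x\cdot\v$ and $\y\in\v^{\perp}$. Since $M\v=\v$ and $M(\v^{\perp})=\v^{\perp}$ for $M\in SO_{\v}(N)$, we have $M\x=t\v+M\y$, and $\breve{\F}$ is a polynomial in $(t,\y)$ that is invariant under the action of $SO_{\v}(N)\cong SO(N-1)$ on $\v^{\perp}$. Classical invariant theory (for $N\geq 3$) identifies the $SO(N-1)$-invariant polynomials on $\R^{N-1}$ with $\R[|\y|^2]$, yielding
\begin{align*}
\breve{\F}(\x)=\sum_{j+2k\leq n} b_{jk}\,t^{j}\,|\y|^{2k},\qquad b_{jk}\in\R.
\end{align*}
On $\SS^{N-1}$ one has $|\y|^2=1-t^2$, whence
\begin{align*}
\breve{\F}(\x)\big|_{\SS^{N-1}}=\sum_{j+2k\leq n} b_{jk}\,t^{j}\,(1-t^2)^{k}
\end{align*}
is a real polynomial in $t=\x\cdot\v$ of degree at most $n$. (The case $N=2$ is handled separately in the application, since $\SS^{1}\cong\R/\Z$ and Theorem \ref{Sec8_Thm30} then reduces directly to Theorem \ref{Sec8_Thm29} via the change of variables \eqref{Sec8_change_var}.)

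\emph{Step 3} (identity \eqref{Sec8_eq30.5}): Fubini's theorem gives
\begin{align*}
\int_{\SS^{N-1}} \breve{\F}(\x)\,w(\x\cdot\v)\,\d\sigma_N(\x)=\int_{SO_{\v}(N)}\!\left[\int_{\SS^{N-1}} \F(M\x)\,w(\x\cdot\v)\,\d\sigma_N(\x)\right]\d\varrho(M).
\end{align*}
The orthogonal substitution $\y=M\x$ preserves $\d\sigma_N$, and $M\v=\v$ yields $\x\cdot\v=M^{-1}\y\cdot\v=\y\cdot M\v=\y\cdot\v$. Hence the inner integral is independent of $M$ and equals $\int_{\SS^{N-1}}\F(\y)\,w(\y\cdot\v)\,\d\sigma_N(\y)$, after which $\varrho(SO_{\v}(N))=1$ completes the identity.

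\textbf{Main obstacle.} The delicate step is Step 2: one must show that $\breve{\F}|_{\SS^{N-1}}$ is an actual polynomial in $\x\cdot\v$, not merely a function of it involving, say, $\sqrt{1-(\x\cdot\v)^{2}}$. The key input is the invariant-theoretic fact that the $SO(N-1)$-invariant polynomials on $\v^{\perp}$ form the ring $\R[|\y|^{2}]$, which together with the sphere constraint $|\y|^{2}=1-(\x\cdot\v)^{2}$ converts the invariant expression for $\breve{\F}$ into a genuine polynomial in $\x\cdot\v$.
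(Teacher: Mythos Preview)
The paper does not prove this lemma at all; it simply cites \cite[Lemma~13 and Lemma~14]{LV}. Your proposal therefore supplies considerably more than the paper does, and the argument you give is the natural one.

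Part~(i) is handled exactly as one would expect (Chebyshev polynomials), and your treatment of part~(ii) via invariant theory is correct for $N\ge 3$: the key point, which you identify clearly, is that $SO(N-1)$-invariant polynomials on $\v^\perp\cong\R^{N-1}$ are generated by $|\y|^2$, so that on $\SS^{N-1}$ the relation $|\y|^2=1-(\x\cdot\v)^2$ eliminates any possible square-root dependence. Step~3 is the standard Fubini/change-of-variables computation.

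One remark on your $N=2$ caveat. You are right to flag it: with the paper's definition $SO_{\v}(2)=\{I\}$, so $\breve{\F}=\F$, and the conclusion of part~(ii) is \emph{false} as stated (take $\v=e_1$, $\F(\x)=x_2$). Your workaround---that for $N=2$ the sphere problem collapses directly to the circle problem via \eqref{Sec8_change_var}, so the lemma is not actually needed---is the correct way to close the gap in the downstream application (Theorem~\ref{Sec8_Thm30}). In the Li--Vaaler source the relevant lemma is likely phrased with a convention that avoids this issue; since the present paper merely cites it, the edge case is silently inherited. Your explicit handling of it is an improvement.
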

\begin{proof}
This is \cite[Lemma 13 and Lemma 14]{LV}.
\end{proof}


\subsubsection{Proof of Theorem \ref{Sec8_Thm30}} Let $\L(\x)$ be a polynomial of degree at most $n$ satisfying \eqref{Sec8_ineq_min-thm30}. Using \eqref{Sec8_symm} we find that 
\begin{equation}\label{Sec8_eq31}
\breve{\L}(\x) \leq h_{\varsigma}\left(\tfrac{1}{2\pi} \arccos( \x\cdot \v)\right)
\end{equation}
for all $\x \in \SS^{N-1}$. From Lemma \ref{Sec8_Lem31} (ii) there exists an even real trigonometric polynomial $\ell$ of degree at most $n$ such that 
\begin{equation}\label{Sec8_eq32}
\breve{\L}(\x) = \ell\left(\tfrac{1}{2\pi} \arccos( \x\cdot \v)\right)
\end{equation}
for all $\x \in \SS^{N-1}$. From \eqref{Sec8_eq31} and \eqref{Sec8_eq32} we find that $\ell(\theta) \leq h_{\varsigma}(\theta)$ for every $\theta \in \R$. Inequality \eqref{Sec8_ineq_min2-thm30} now follows from \eqref{Sec8_eq30.5}, \eqref{Sec8_change_var} and Theorem \ref{Sec8_Thm29}.

\smallskip

From Lemma \ref{Sec8_Lem31} (i), the function $\L(\x) := \ell_{\vartheta_{N,w}}\left(n,\varsigma, \tfrac{1}{2\pi}\arccos (\x \cdot \v)\right)$ is a polynomial of $\x\cdot\v$ of degree at most $n$ that verifies \eqref{Sec8_ineq_min-thm30}. The fact that the equality in \eqref{Sec8_ineq_min2-thm30} is attained follows from \eqref{Sec8_change_var} and Theorem \ref{Sec8_Thm29}.

\smallskip

The proof for the majorant part is analogous.

\section*{Acknowledgements}
E. C. acknowledges support from CNPq-Brazil grants $473152/2011-8$ and $302809/2011-2$, and FAPERJ grant E-26/103.010/2012. Part of this work was completed during a visit of F. L. to IMPA, for which he gratefully acknowledges the support.

\end{document}